\documentclass[12pt,a4paper,reqno]{amsart}
\usepackage[utf8]{inputenc}
\usepackage[T1]{fontenc}
\usepackage{amsmath}
\usepackage{amsthm}
\usepackage{amssymb}
\usepackage[abbrev]{amsrefs}
\usepackage{mathrsfs}
\usepackage[dvipsnames,hyperref]{xcolor}
\usepackage{bm}
\usepackage{bbm}
\usepackage{enumitem}
\usepackage{hyperref}
\AtBeginDocument{\def\MR#1{}}
\makeatletter
\@namedef{subjclassname@2020}{%
\textup{2020} Mathematics Subject Classification}
\makeatother
\newtheorem{clm}{}

\newtheorem{claim}[clm]{Claim}
\newtheorem{thm}{}[section]
\newtheorem{theorem}[thm]{Theorem}
\newtheorem{corollary}[thm]{Corollary}
\newtheorem{lemma}[thm]{Lemma}
\newtheorem{proposition}[thm]{Proposition}
\newtheorem{question}[thm]{Question}
\theoremstyle{definition}

\theoremstyle{remark}
\newtheorem{remark}[thm]{Remark}
\numberwithin{equation}{section}
\allowdisplaybreaks
\newcommand{\littletaller}{\mathchoice{\vphantom{\big|}}{}{}{}}
\newcommand\restr[2]{{\left.\kern-\nulldelimiterspace #1 \littletaller \right|_{#2}}}
\newcommand{\Nnorm}[1]{{\left\vert\kern-0.25ex\left\vert\kern-0.25ex\left\vert #1
\right\vert\kern-0.25ex\right\vert\kern-0.25ex\right\vert}}
\newcommand{\abs}[1]{\left\lvert#1\right\rvert}
\newcommand{\norm}[1]{\left\lVert#1\right\rVert}
\newcommand{\enbrace}[1]{\left\lbrace#1\right\rbrace}
\newcommand{\enbrak}[1]{\left[#1\right]}
\newcommand{\enpar}[1]{\left(#1\right)}
\newcommand{\BV}{\ensuremath{\mathrm{BV}}}
\newcommand{\Dt}{\ensuremath{\mathcal{D}}}
\newcommand{\St}{\ensuremath{\mathcal{S}}}
\newcommand{\Bt}{\ensuremath{\mathcal{B}}}
\newcommand{\Rt}{\ensuremath{\mathcal{D}}}
\newcommand{\Ts}{\ensuremath{\mathcal{T}}}
\newcommand{\Ft}{\ensuremath{\mathcal{F}}}
\newcommand{\Pt}{\ensuremath{\mathcal{P}}}
\newcommand{\Ct}{\ensuremath{\mathcal{C}}}
\newcommand{\Nt}{\ensuremath{\mathcal{N}}}
\newcommand{\Gt}{\ensuremath{\mathcal{G}}}
\newcommand{\Ht}{\ensuremath{\mathcal{H}}}
\newcommand{\XB}{\ensuremath{\mathcal{X}}}
\newcommand{\YB}{\ensuremath{\mathcal{Y}}}
\newcommand{\Ind}{\ensuremath{\mathbbm{1}}}
\newcommand{\EE}{\ensuremath{\mathbb{E}}}
\newcommand{\FF}{\ensuremath{\mathbb{F}}}
\newcommand{\NN}{\ensuremath{\mathbb{N}}}
\newcommand{\RR}{\ensuremath{\mathbb{R}}}
\newcommand{\XX}{\ensuremath{\mathbb{X}}}
\newcommand{\YY}{\ensuremath{\mathbb{Y}}}
\newcommand{\Sym}{\ensuremath{\mathbb{S}}}
\newcommand{\xx}{\ensuremath{\bm{x}}}

\newcommand{\prim}{\ensuremath{\bm{\sigma}}}
\newcommand{\primt}{\ensuremath{\bm{\tau}}}
\newcommand{\usdf}{\ensuremath{\bm{\varphi_u}}}
\newcommand{\lsdf}{\ensuremath{\bm{\varphi_l}}}
\newcommand{\SNA}{\ensuremath{\bm{S}}}
\newcommand{\SNC}{\ensuremath{\bm{T}}}
\newcommand{\SNB}{\ensuremath{\bm{T^c}}}
\newcommand{\unc}{\ensuremath{\bm{k}}}

\DeclareMathOperator{\supp}{supp}

\subjclass[2020]{41A65, 41A46, 41A17, 46B15, 46B45}
\keywords{Non-linear approximation, greedy bases, unconditional bases, subsymmetric bases, Property~(A)}
\begin{document}
\title[]{Isometric Renormings for Greedy Bases in Banach Spaces, with applications to the Haar system in $\bm{L_{p}[0,1]}$, $\bm{1<p<\infty}$.}
\author[Albiac]{Fernando Albiac}
\address{Fernando Albiac\\ Department of Mathematics, Statistics, and Computer Sciencies--InaMat2\\
Universidad P\'ublica de Navarra\\
Campus de Arrosad\'{i}a\\
Pamplona\\
31006 Spain}
\email{fernando.albiac@unavarra.es}
\author[Ansorena]{Jos\'e L. Ansorena}
\address{José Luis Ansorena\\ Department of Mathematics and Computer Sciences\\
Universidad de La Rioja\\
Logro\~no\\
26004 Spain}
\email{joseluis.ansorena@unirioja.es}
\author[Berasategui]{Miguel Berasategui}
\address{Miguel Berasategui\\
UBA - Pab I, Facultad de Ciencias Exactas y Naturales\\
Universidad de Buenos Aires\\
(1428), Buenos Aires, Argentina}
\email{mberasategui@dm.uba.ar}
\author[Bern\'a]{Pablo M. Bern\'a}
\address{Pablo M. Bern\'a\\
Departamento de Matemáticas, CUNEF Universidad\\
Madrid, 28040 Spain}
\email{pablo.berna@cunef.edu}
\begin{abstract}
We investigate the problem of improving the greedy-type constant of a basis by means of an equivalent renorming of the ambient Banach space. Our main result shows that if a Banach space admits an unconditional and bidemocratic basis whose fundamental function satisfies certain regularity properties, then the space can be renormed so that the basis becomes isometrically greedy. The renorming simultaneously ensures lattice $1$-unconditionality, isometric bidemocracy, and allows prescribing the fundamental function up to a suitable regularization. As a principal application, we resolve a long-standing problem posed by Albiac--Wojtaszczyk in 2006 by proving that for each $1<p<\infty$ the $L_p$-normalized Haar system can be made $1$-greedy under an equivalent norm of $L_p$. Further applications include isometric greedy renormings for bases of Besov spaces, mixed-norm direct sums, and for a wide class of subsymmetric and conditional bases, including spreading models and the canonical basis of Schlumprecht space. These results show that isometric greedy renormings arise in far greater generality than previously known.
\end{abstract}
\thanks{F.\@ Albiac and J.\@ L.\@ Ansorena acknowledge the support of the Spanish Ministry for Science and Innovation under Grant PID2022-138342NB-I00 for \emph{Functional Analysis Techniques in Approximation Theory and Applications.}}
\thanks{M.\@ Berasategui is supported by Grants CONICET PIP11220200101609CO y ANPCyT PICT 2018-04104 (Consejo Nacional de Investigaciones Científicas y Técnicas and Agencia Nacional de Promoción de la Investigación, el Desarrollo Tecnológico y la Innovación, Argentina)}
\thanks{Pablo M.\@ Berná is supported by Grant PID2022-142202NB-I00 (Agencia Estatal de Investigación, Spain)}
\dedicatory{Dedicated to Prof.\@ Przemek Wojtaszczyk, mentor, friend, and co-parent of the isometric greedy basis theory, for many helps.}
\maketitle
\section{Introduction and Background}\noindent
The theory of greedy approximation lies at the crossroads of nonlinear approximation, Banach space geometry, and the structure of bases. Given a Banach space $\XX$ (over the real or complex field $\FF$) with a complete biorthogonal system $(\xx_n,\xx_n^*)_{n\in \Nt}$, and an element $f\in \XX$ with formal expansion $f=\sum_{n\in\Nt} \xx_n^*(f)\, \xx_n$, the \emph{Thresholding Greedy Algorithm} (TGA for short) selects, for each $m\in \NN$ a set $A\subset\NN$ of cardinality $m$ consisting of indices corresponding to the $m$ largest coefficients $\abs{\xx_n^*(f)}$. The associated $m$-term \emph{greedy approximant} is
\[
G_m(f)=\sum_{n\in A} \xx_n^*(f)\, \xx_n.
\]
A basis $(\xx_n)_{n\in \Nt}$ is called \emph{$C$-greedy}, $1\le C<\infty$, if the greedy approximants are always within a multiplicative factor $C$ of the optimal sparse approximation error, that is, $\norm{f-G_m(f)}\le C\,\sigma_m(f)$ for all $f\in\XX$ and $m\in\NN$, where
\[
\sigma_m(f)=\inf\enbrace{\norm{f-\sum_{n\in A} a_n\, \xx_n} \colon \abs{A}=m, \, a_n\in\FF}.
\]

The systematic study of greedy bases began with the seminal paper of Konyagin and Temlyakov~\cite{KoTe1999}, where they introduced democracy and superdemocracy as quantitative symmetry conditions on bases and proved that these concepts capture the essence of greedy approximation. Their main result states that a basis is greedy if and only if it is unconditional and democratic. In particular, unconditionality alone is not enough: the uniform distribution of coefficient mass across index sets of equal cardinality is just as essential as stability under coordinate suppressions.

The above characterization is sharp but does not address the delicate quantitiative case. The class of $C$-greedy bases becomes increasingly symmetric as $C$ approaches $1$, and studying the boundary case $C=1$ is of special importance.

In their foundational work~\cite{AW2006}, the authors introduced \emph{Property~(A)}. This property asserts that replacing the signs or permuting the coefficients among a collection of ``large'' coordinates (in modulus) does not increase the norm. Their main theorem is the characterization of $1$-greedy bases as those bases that are $1$-suppression unconditional and satisfy Property~(A). Thus $1$-greedy bases enjoy a symmetry akin to that of symmetric bases, but restricted to the coordinates that dominate the coefficient vector. As a consequence, the structure of $1$-greedy bases is qualitatively more rigid than that of general greedy bases.

Albiac--Wojtaszczyk result provided a satisfactory positive answer to a problem raised by Wojtaszczyk (see \cite{Wojt2003}*{Problem 1}) and was the springboard for the, so called, \emph{isometric theory of greedy bases}. They also posed several open problems concerning the existence of renormings that improve greedy and democratic constants. Most notably, they asked whether the $L_p$-normalized Haar system, $1<p<\infty$, can be made $1$-greedy under an equivalent norm of the classical Lebesgue space $L_p$ (see \cite{AW2006}*{Problem 6.2}). Of course, the case $p=2$ is naturally excluded because the normalized Haar system of $L_2$ is orthonormal, hence $1$-greedy.

The first thorough, in-depth approach to greedy renormings was carried out in the papers \cites{DOSZ2011,DKOSZ2014}. Their contributions clarified both the limitations and possibilities of renorming procedures. On the negative side:

\begin{theorem}[\cite{DOSZ2011}*{Corollary 1.3}]
There is no renorming of the dyadic Hardy space $H_1$ that makes the Haar basis $1$-greedy.
\end{theorem}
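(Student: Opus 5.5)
The plan is to argue by contradiction. Suppose $\Nnorm{\cdot}$ is an equivalent norm on dyadic $H_1$ under which the $L_1$-normalized Haar system $(h_I)$ is $1$-greedy. By the Albiac--Wojtaszczyk characterization recalled above, the Haar system is then $1$-suppression unconditional and has Property~(A) for $\Nnorm{\cdot}$. The goal is to show that these two isometric properties force an estimate that fails in $H_1$.

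The first step is to extract a symmetry statement from Property~(A). Take a finite family of disjoint collections of dyadic intervals, all with coefficients of modulus $1$. Suppression unconditionality together with Property~(A) lets us move unit coefficients between positions of equal modulus, provided they dominate the rest of the vector. From this I would derive that $\Nnorm{\sum_{I\in A} \varepsilon_I h_I}$ depends only on $\abs{A}$, up to the equivalence constant $C$ between $\Nnorm{\cdot}$ and $\norm{\cdot}_{H_1}$. More importantly, I would derive the following kind of isometric relation: if $A\cap B=\emptyset$ and $\abs{A}=\abs{B}$, then for any $g$ with $\supp g$ disjoint from $A\cup B$ and $\max\abs{\text{coeff}(g)}\le 1$,
\[
\Nnorm{g+\Ind_A}=\Nnorm{g+\Ind_B}.
\]
Here $\Ind_A=\sum_{I\in A}h_I$.

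The second step is to iterate this exchange many times to transport a highly structured configuration into an unstructured one without changing the norm. On the structured side I would take $n$ nested dyadic intervals containing a common point. Their normalized Haar sum has $H_1$ norm of order $n$, coming from the square function being about $n$ on a small set, or more precisely $\sim n$ or $\sim\log$-type growth depending on the normalization. On the unstructured side I would take $n$ pairwise disjoint dyadic intervals, whose sum has $H_1$ norm about $n$ as well. To get a contradiction, the exchange has to be applied in the presence of a fixed background $g$ with coefficients bounded by $1$, and then iterated over many scales or blocks. This amplifies a constant multiplicative discrepancy into an unbounded one, via a telescoping or tensoring argument in the spirit of a Dilworth--Odell--Schlumprecht--Zsák construction. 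The model would use sums $\sum_{k=1}^N \Ind_{A_k}$ where each $A_k$ is either a "tree" block or a "disjoint" block.

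The main obstacle is this amplification step. One must find configurations $g+\Ind_A$ and $g+\Ind_B$ whose $H_1$ norms differ by a factor tending to infinity, while they are linked by a chain of Property~(A) moves. I would first try the chain in which a long dyadic chain $I_1\supset I_2\supset\cdots\supset I_n$ is swapped block by block with intervals spread across disjoint locations. I would then use the square-function characterization of $H_1$ to compute both norms: the chain yields $\norm{\cdot}_{H_1}\approx \sum_k\abs{I_k}^{-1}\cdot$(overlap), while the spread configuration yields additive behaviour. The ratio is unbounded, which contradicts $C^{-1}\norm{\cdot}\le\Nnorm{\cdot}\le C\norm{\cdot}$. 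If the $L_1$-normalization makes the two sides comparable, I would instead use mixed coefficients in the background $g$, so that Property~(A) still applies (the moved unit coefficients dominate) while the $H_1$ square function separates the two configurations.
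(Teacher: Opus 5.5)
\textbf{There is a genuine gap, and it sits in your amplification step.} That step is not merely unproven: as you have set it up, it cannot work.

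First, your concrete configurations do not separate. Use the $H_1$-normalization $e_I=h_I/\abs{I}$, so that $\norm{e_I}_{H_1}=1$. Then $n$ disjoint intervals give norm $n$. A nested chain $I_1\supset\cdots\supset I_n$ also gives norm $\approx n$, because the square function is $\approx 2^{k}$ on $I_k\setminus I_{k+1}$, a set of measure $\approx 2^{-k}$. This is just the democracy of the Haar system in $H_1$, whose fundamental function is of order $m$.

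More fundamentally, the Haar system is a greedy basis of $H_1$. Hence it is $C$-symmetric for largest coefficients already for the original norm:
\[
\norm{g+\Ind_{\varepsilon,A}}_{H_1}\le C\norm{g+\Ind_{\delta,B}}_{H_1}
\]
for every admissible background $g$, including backgrounds with mixed coefficients. Chaining does not compound this constant. A Property~(A) move only relocates or re-signs the coordinates of maximal modulus and leaves the rest of the vector in place, so a chain of moves is again a single move. Interleaving suppressions only adds a factor equal to the unconditionality constant. So any two vectors linked by the relations you extract have comparable $H_1$-norms, and no telescoping of equalities $\Nnorm{g+\Ind_A}=\Nnorm{g+\Ind_B}$ can contradict the equivalence of $\Nnorm{\cdot}$ with $\norm{\cdot}_{H_1}$.

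\textbf{What is missing is convexity.} You need to turn these equalities into an inequality between vectors that are not related by a move. Normalize by the $1$-democracy forced by $1$-greediness, and let $x=g+\varepsilon e_i$ with $\abs{\varepsilon}=1$ dominating the coefficients of $g$. Take indices $j_1,\dots,j_N\notin\supp(x)$. Property~(A) gives $\Nnorm{g+e_{j_k}}=\Nnorm{g+e_i}=\Nnorm{x}$, and averaging yields
\[
\Nnorm{g+\tfrac{1}{N+1}\Ind_{\{i,j_1,\dots,j_N\}}}\le\Nnorm{x}.
\]
Now iterate down the nonincreasing rearrangement of the coefficients of $x=\sum_n a_n e_n$. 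At step $k$, spread the $k$-th largest coefficient over $N_k+1\approx L\abs{a_{(k)}}$ fresh positions, taking $L$ large so that the spread entries never exceed the next coefficient to be moved. Finish with $1$-suppression unconditionality, applying $M_\lambda$ with $\lambda\in[0,1]^{\supp}$. This gives
\[
\Nnorm{x}\ge c\sum_n\abs{a_n},\qquad c=\inf_m \frac{\varphi(m)}{m},
\]
where $\varphi$ is the fundamental function for $\Nnorm{\cdot}$. Since $\varphi(m)\approx m$ here, $c>0$, and the Haar system would be equivalent to the unit vector basis of $\ell_1$. That is false. The Rademacher functions $r_k=2^{-k}\sum_{\abs{I}=2^{-k}}e_I$ each have coefficient $\ell_1$-norm $1$, while $\norm{\sum_{k=1}^n r_k}_{H_1}\approx\sqrt n$.

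This mechanism ($1$-greedy together with a fundamental function of order $m$ forces $\ell_1$) is what underlies the result of \cite{DOSZ2011}. The paper only quotes that result and gives no proof of its own.
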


\begin{theorem}[\cite{DOSZ2011}*{Corollary 1.2}]
Tsirelson's space admits no renorming with any normalized $1$-greedy basis.
\end{theorem}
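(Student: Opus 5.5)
The plan is to argue by contradiction. Suppose $\Nnorm{\cdot}$ is an equivalent norm on Tsirelson's space $\Ts$ and $(\xx_n)_{n=1}^\infty$ is a basis of $(\Ts,\Nnorm{\cdot})$ that is normalized and $1$-greedy. I will use the Albiac--Wojtaszczyk characterization recalled above: $(\xx_n)$ is then $1$-suppression unconditional and has Property~(A). The aim is to show that these isometric conditions force a uniform lower $\ell_1$-estimate on flat vectors. Such an estimate is incompatible with the asymptotic structure of $\Ts$. The point is that the isometric constant, not mere greediness, forces this, since the canonical basis of $\Ts$ is greedy.

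First, I extract exact symmetry on flat vectors. Apply Property~(A) to $f=\sum_{n\in A}\varepsilon_n\xx_n$, where all coefficients are unimodular and hence all maximal. This shows that $\Nnorm{\sum_{n\in A}\varepsilon_n\xx_n}$ depends only on $\abs{A}$. So the basis is isometrically superdemocratic, and there is a fundamental function $\varphi$ with $\Nnorm{\sum_{n\in A}\varepsilon_n\xx_n}=\varphi(\abs{A})$ for every finite $A$ and all signs. Because this is an identity rather than a two-sided estimate, information about $\Nnorm{\sum_{n\in A}\xx_n}$ on one particular family of sets transfers exactly to all sets of the same cardinality.

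Second, I use the lower structure of $\Ts$. Since $\Ts$ is reflexive, $(\xx_n)$ is a seminormalized weakly null sequence in the original norm. Every such sequence in $\Ts$ has a subsequence $(\xx_{n_k})$ that dominates, up to a constant $c>0$ depending only on the renorming, the unit vector basis of $\ell_1$ on Schreier-admissible sets, namely sets $\{k_1<\dots<k_m\}$ with $m\le k_1$. Taking $m$ such vectors far out gives $\varphi(m)\ge c\,m$ for every $m$. By the first step this yields
\[
\Nnorm{\sum_{n\in A}\varepsilon_n\xx_n}\ge c\,\abs{A}
\]
for \emph{every} finite set $A$, and not only for admissible ones.

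Third, I contradict the fact that $\Ts$ has no $\ell_1$ spreading models of higher Schreier order. For every weakly null seminormalized sequence in $\Ts$ and every $\varepsilon>0$, one can find finite sets $A$ on which $\norm{\abs{A}^{-1}\sum_{n\in A}\xx_n}<\varepsilon$. This holds for suitable repeated-average sets of order $2$, which belong to $\St_2$ but not $\St_1$, using the Argyros--Mercourakis--Tsarpalias repeated-averages technique together with the fact that the $\ell_1$-index of $\Ts$ is $\omega^\omega$ at the first level only. After passing through the equivalence of norms, this contradicts $\varphi(m)\ge cm$.

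I expect this third step to be the main obstacle. The difficulty is that the small averages are naturally produced with nonuniform convex coefficients, the repeated averages, rather than flat ones. I would handle this by splitting a repeated average into its level sets, or by choosing sub-blocks of equal size so that the repeated average is essentially flat. Then $1$-unconditionality and the superdemocracy of the first step convert the upper estimate into one for $\varphi$ itself. If this splitting fails, the fallback is to use Property~(A) beyond flat vectors: combinations of disjoint equal-size flat blocks $\sum_j 1_{A_j}$ are permutable, so those blocks form a $1$-symmetric sequence on $0,\pm1$ combinations. This would produce a subsymmetric-type block sequence in $\Ts$, contradicting the Casazza--Shura theorem that $\Ts$ has no subsymmetric basic sequences.
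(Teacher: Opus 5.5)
Your Steps~1 and~2 are sound. They match the standard reduction behind this result; the paper itself gives no proof and only cites Dilworth--Odell--Schlumprecht--Zs\'ak. Property~(A) makes the fundamental function $\varphi$ exact, and the asymptotic-$\ell_1$ structure of $\Ts$ forces $\varphi(m)\ge cm$.

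\textbf{Step~3 fails: its central claim is false.} Take the canonical basis $(t_n)$ of $\Ts$. Any $m$-element set $A\subset\NN$ contains a Schreier-admissible subset of size $\lceil m/2\rceil$, namely its top half. Hence $\norm{\sum_{n\in A}t_n}\ge m/4$, so flat averages are never small. More decisively, your own Step~2 needs only democracy up to constants. It therefore shows that flat averages of \emph{every} greedy basis of \emph{every} equivalent norm on $\Ts$ are bounded below. Since the canonical basis is greedy, no argument confined to flat vectors can yield a contradiction. Your ``order $2$'' claim is also off: $\norm{x}\ge 2^{-k}\sum_j\norm{E_jx}$ for $\mathcal{S}_k$-admissible families, so $\mathcal{S}_2$-averages have norm at least $1/4$.

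Splitting into level sets does not help. Lattice unconditionality plus exact superdemocracy give only $\Nnorm{x}\ge\sup_m a_m^*\,\varphi(m)$, where $(a^*_m)$ is the nonincreasing rearrangement of the coefficients. With $\varphi(m)\approx m$ this is a weak-$\ell_1$ bound, which small convex combinations happily satisfy. The fallback fails too. $1$-symmetry of $\{0,\pm1\}$-combinations of equal-size flat blocks is just Step~1 restated; it says nothing about general coefficients. So it produces no subsymmetric basic sequence, and Casazza--Shura does not apply.

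\textbf{The missing idea} is to use Property~(A) on non-flat vectors to upgrade $\varphi(m)\ge cm$ to a genuine $\ell_1$ lower estimate. The contradiction then comes from reflexivity, not from small averages.

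Take $x$ finitely supported and round each $\abs{\xx_n^*(x)}$ down to a power of $2$. This is a multiplier with entries in $(1/2,1]$, hence a contraction (a convex combination of the $S_A$), and it loses at most half of the $\ell_1$-mass. So we may write $x=\sum_{j=j_0}^{J}2^{-j}\Ind_{\varepsilon_j,S_j}$. Put $x=2^{-j_0}\Ind_{\varepsilon_{j_0},S_{j_0}}+z$ and pick $B$ disjoint from $\supp(x)$ with $\abs{B}=\abs{S_{j_0}}$. Property~(A) gives $\Nnorm{x}=\Nnorm{2^{-j_0}\Ind_{\eta,B}+z}$, so by convexity
\[
\Nnorm{2^{-j_0-1}\enpar{\Ind_{\varepsilon_{j_0},S_{j_0}}+\Ind_{\eta,B}}+z}\le\Nnorm{x}.
\]
The new vector has the same $\ell_1$-mass and one fewer level. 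After $J-j_0$ steps you reach a flat vector $2^{-J}\Ind_{\delta,S}$ with $2^{-J}\abs{S}$ equal to that mass. Its norm is $2^{-J}\varphi(\abs{S})\ge c\,2^{-J}\abs{S}$.

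Hence $\Nnorm{\sum_n a_n\xx_n}\ge\frac{c}{2}\sum_n\abs{a_n}$, so $(\xx_n)$ is equivalent to the unit vector basis of $\ell_1$. Then $\Ts$ is isomorphic to $\ell_1$, contradicting the reflexivity of $\Ts$. This principle, that a $1$-greedy basis with linearly growing fundamental function is equivalent to the $\ell_1$-basis, is what drives the cited corollary.
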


Thus, $1$-greedy renormings cannot be expected universally. On the positive side, the authors showed:

\begin{theorem}[\cite{DKOSZ2014}*{Theorem 10}]
Any greedy basis can be renormed to become lattice $1$-unconditional and $(2+\varepsilon)$-greedy for any $\epsilon>0$.
\end{theorem}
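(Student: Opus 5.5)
The plan is to build the new norm in two stages. The first stage makes the basis lattice $1$-unconditional. The second stage adds a term that makes democracy, in its Property~(A)-type form, hold with a small constant. Let $(\xx_n)$ be $C$-greedy in $(\XX,\norm{\cdot})$. By the Konyagin--Temlyakov characterization it is $K$-unconditional and $\Delta$-democratic, and we may take it semi-normalized. For the first stage set
\[
\norm{f}_1=\sup\enbrace{\norm{\sum_n \lambda_n \xx_n^*(f)\,\xx_n}\colon \abs{\lambda_n}\le 1}.
\]
This norm is equivalent to $\norm{\cdot}$, with constant at most $K$. The basis is lattice $1$-unconditional for it: $\norm{\sum a_n\xx_n}_1\le\norm{\sum b_n\xx_n}_1$ whenever $\abs{a_n}\le\abs{b_n}$. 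It is still democratic, with some new constant.

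For the second stage, let $\varphi(m)=\sup_{\abs{A}\le m}\norm{1_A}_1$, where $1_A=\sum_{n\in A}\xx_n$. Replace $\varphi$ by a regularization $\bar\varphi$ such that $\bar\varphi(m)/m$ is nonincreasing and $\bar\varphi\approx\varphi$. Then, for small $\delta>0$, define
\[
\Nnorm{f}=\max\enbrace{\delta\norm{f}_1,\ \sup_{m}\bar\varphi(m)\,a_m^*(f)},
\]
where $(a_m^*(f))$ is the nonincreasing rearrangement of $(\abs{\xx_n^*(f)})$. The second term is a weak-Lorentz-type functional. It is lattice-monotone and symmetric, and democracy with respect to $\norm{\cdot}_1$ shows that it is dominated by $\norm{f}_1$, so $\Nnorm{\cdot}$ is an equivalent norm. (For the $\ell_\infty$-type Lorentz functional, using the $\sup$ with $\bar\varphi$ concave is enough for the triangle inequality, up to a constant which we absorb.) Lattice $1$-unconditionality is preserved, because both terms are monotone in the moduli of the coefficients.

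Next comes the greedy estimate. Fix $f$, a greedy set $A$ of size $m$, and a competitor $g=f-\sum_{n\in B}b_n\xx_n$ with $\abs{B}=m$. By $1$-unconditionality we may assume $g=P_{B^c}f$. We must bound $\Nnorm{f-P_A f}$ by $(2+\varepsilon)\Nnorm{P_{B^c}f}$. Write $f-P_Af=P_{(A\cup B)^c}f+P_{B\setminus A}f$. The first piece is dominated by $P_{B^c}f$ in the lattice order, so its norm is at most $\Nnorm{P_{B^c}f}$. For the second piece, every coefficient on $B\setminus A$ is at most $t=\min_{A}\abs{\xx_n^*(f)}$, and $\abs{A\setminus B}=\abs{B\setminus A}=k$. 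So $\norm{P_{B\setminus A}f}_1\le t\,\varphi(k)$ and the weak-Lorentz term of $P_{B\setminus A}f$ is at most $t\,\bar\varphi(k)$. On the other hand, $P_{B^c}f$ has at least $k$ coefficients of modulus at least $t$, namely those on $A\setminus B$. Hence $\Nnorm{P_{B^c}f}\ge\bar\varphi(k)\,t$, and this holds for the second term of the max regardless of $\delta$. Therefore
\[
\Nnorm{P_{B\setminus A}f}\le\max\enbrace{\delta\,t\,\varphi(k),\ t\,\bar\varphi(k)}\le(1+\varepsilon)\,t\,\bar\varphi(k),
\]
provided $\delta\varphi\le(1+\varepsilon)\bar\varphi$. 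Since $\varphi\lesssim\bar\varphi$, this holds once $\delta$ is small. Adding the two pieces gives $\Nnorm{f-G_m f}\le(2+\varepsilon)\sigma_m(f)$.

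The main obstacle is the last step: making $\Nnorm{\cdot}$ a genuine norm while keeping the lower bound $\Nnorm{P_{B^c}f}\ge\bar\varphi(k)t$ sharp with constant $1$. Two points need care. The weak-Lorentz functional is only a quasinorm unless $\bar\varphi$ is regularized (concave with $\bar\varphi(m)/m$ decreasing). If convexification is needed, I would use the Marcinkiewicz norm $\sup_m \frac{\bar\varphi(m)}{m}\sum_{j\le m}a_j^*(f)$ instead. It is a norm, it still satisfies the lower bound $\ge\bar\varphi(k)t$, and it gives the upper bound $t\,\bar\varphi(k)$ on vectors with $k$ coefficients bounded by $t$. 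Finally, we still need to check that it is dominated by $\norm{f}_1$, using the standard fact that democracy of an unconditional basis implies $\bar\varphi(m)\,m^{-1}\sum_{j\le m}a_j^*(f)\lesssim\norm{f}$.
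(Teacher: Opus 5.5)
Your first stage, the decomposition $f-P_Af=P_{(A\cup B)^c}f+P_{B\setminus A}f$, and the greedy estimate are all fine, provided the second term $N$ of $\Nnorm{\cdot}$ has four properties:
\begin{enumerate}
\item[(i)] it is a genuine norm;
\item[(ii)] it is lattice-monotone;
\item[(iii)] it is at least $t\,\bar\varphi(k)$ on vectors with $k$ coefficients of modulus $\ge t$, and at most $t\,\bar\varphi(k)$ on vectors supported on $k$ coordinates with coefficients of modulus $\le t$;
\item[(iv)] it is dominated by $\norm{\cdot}_1$.
\end{enumerate}
Neither of your candidates has all four for a general greedy basis.

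The weak-Lorentz functional fails (i), and this cannot be ``absorbed'' into a constant. Take $\bar\varphi(m)=\sqrt m$: the vectors $x=(1,2^{-1/2})$ and $y=(2^{-1/2},1)$ both have functional $1$, while $x+y$ has functional $1+\sqrt2$. A renorming must be a norm, and your last step uses the exact triangle inequality. Passing to the Banach envelope destroys the sharp lower bound in (iii) when $\varphi$ lacks the URP. For example, the envelope of weak-$\ell_1$ has fundamental function of order $m/\log m$.

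The Marcinkiewicz norm does satisfy (i)--(iii), but the ``standard fact'' you invoke for (iv) is false. By Lemma~\ref{lem:DualBound}, the estimate $\frac{\bar\varphi(m)}{m}\sum_{j\le m}a_j^*(f)\lesssim\norm{f}$ forces $\usdf[\XB^*,\XX^*](m)\lesssim m/\bar\varphi(m)$, which is bidemocracy. Conversely, Lemma~\ref{lem:Marcin} shows that bidemocracy is exactly what yields it. Greedy bases need not be bidemocratic. For the Haar system of dyadic $H_1$ one has $\varphi(m)\approx m$, so the Marcinkiewicz term is equivalent to $\sum_n\abs{\xx_n^*(f)}$. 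Its domination by the $H_1$-norm would make the Haar system equivalent to the unit vector basis of $\ell_1$, which it is not.

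No choice of regularization repairs this, because the obstruction comes from the symmetry of the second term. Let $N$ be any lattice norm on coefficients that is invariant under permutations and changes of sign, with $N(1_A)\ge c\,\varphi(\abs{A})$. Averaging $\abs{P_Af}$ over the permutations of $A$ gives $N(f)\ge c\,\frac{\varphi(m)}{m}\sum_{n\in A}\abs{\xx_n^*(f)}$ for $\abs{A}=m$. Hence $N\lesssim\norm{\cdot}$ again forces bidemocracy, so within your scheme the second term cannot be symmetric.

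Consequently your argument is valid only for bidemocratic bases. There, taking $\bar\varphi=\varphi$, which is admissible by Lemma~\ref{lem:DNI}, it even gives constant $2$. But that is precisely the case in which Theorem~\ref{thm:DKOSZ:4} already yields $(1+\varepsilon)$-greediness. The real content of the $(2+\varepsilon)$ statement is the non-bidemocratic case. There the term controlling $P_{B\setminus A}f$ must be a non-symmetric device compatible with the original norm, and your proposal does not supply one.
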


\begin{theorem}[\cite{DOSZ2011}*{Theorem 2.1} and \cite{DKOSZ2014}*{Theorem 4}]\label{thm:DKOSZ:4}
If a greedy basis is also bidemocratic, it can be renormed to become lattice $1$-unconditional, isometrically bidemocratic, and $(1+\varepsilon)$-greedy for any $\epsilon>0$.
\end{theorem}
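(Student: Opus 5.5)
We construct the new norm explicitly. Let $(\xx_n)_{n\in\Nt}$ be greedy and bidemocratic with dual functionals $(\xx_n^*)$. By Konyagin--Temlyakov it is unconditional and democratic. Let $\varphi(m)=\sup_{|A|\le m}\norm{\sum_{n\in A}\varepsilon_n\xx_n}$ be the upper fundamental function and $\varphi^*$ the dual one. Bidemocracy says $\varphi(m)\varphi^*(m)\lesssim m$. First we pass to the lattice norm
\[
\norm{f}_0=\sup\{\norm{\textstyle\sum \lambda_n\xx_n^*(f)\xx_n}: |\lambda_n|\le 1\},
\]
which is equivalent to the original norm by unconditionality and makes the basis lattice $1$-unconditional. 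Next we replace $\varphi$ by a regularized version $\phi$ with $\phi(m)/m$ nonincreasing and $\phi$ nondecreasing, for instance the least concave majorant, still with $\phi(m)\approx\varphi(m)$.

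For a small parameter $\delta>0$ we then define
\[
\Nnorm{f}=\max\Bigl\{\norm{f}_0,\ \delta^{-1}\sup_{A}\frac{\phi(|A|)}{|A|}\sum_{n\in A}|\xx_n^*(f)|\Bigr\}.
\]
The second term is an $\ell_1$-type weak-Lorentz norm, equivalent to testing against the normalized indicator sums $\frac{\phi(|A|)}{|A|}\sum_{n\in A}\varepsilon_n\xx_n^*$. Bidemocracy gives $\norm{\sum_{A}\varepsilon_n\xx_n^*}\lesssim |A|/\varphi(|A|)$, so the second term is bounded by $C\delta^{-1}\norm{f}$. It is therefore an equivalent norm, but it is not dominated by $\norm{f}_0$ for small $\delta$. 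So we instead scale in the opposite direction: we put the weight $\delta$ on $\norm{f}_0$, giving $\Nnorm{f}=\max\{\delta\norm{f}_0, \Lambda(f)\}$ with $\Lambda$ the sup above. The point is that on indicator sums $\Lambda(\sum_A\varepsilon_n\xx_n)=\phi(|A|)$ exactly, while $\delta\norm{\cdot}_0$ is at most $C\delta\phi(|A|)<\phi(|A|)$. This gives isometric democracy, and by the same duality computation isometric bidemocracy. Lattice $1$-unconditionality is preserved because both terms are lattice norms.

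The remaining step is the $(1+\varepsilon)$-greedy estimate. We use the standard decomposition: for $f$, a greedy set $A$ with $|A|=m$, and any $B$ with $|B|=m$ and coefficients $b$, we have $f-G_m f=P_{(A\cup B)^c}(f-b)+P_{B\setminus A}f$. We control $P_{B\setminus A}f$ by comparing it with the untouched part on $A\setminus B$, where all coefficients are at least $\min_{A}|\xx_n^*(f)|=t$. For the $\Lambda$ part, bidemocracy with constant $1$ makes the swap lossless, since $\Lambda$ is essentially symmetric on coordinates of modulus at most $t$ versus at least $t$. For the $\delta\norm{\cdot}_0$ part we lose a factor $C\delta$. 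Altogether this gives $\Nnorm{f-G_mf}\le(1+C'\delta)\Nnorm{f-\sum_B b_n\xx_n}$.

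The main obstacle is exactly this Property~(A)-type swap estimate. We must verify that the error from the unconditional component is genuinely of order $\delta$ relative to $\Nnorm{\cdot}$ and not relative to $\norm{\cdot}_0$. We would handle it by bounding $\delta\norm{P_{B\setminus A}f}_0\le \delta t\,\varphi(|B\setminus A|)\lesssim\delta\,\Lambda(P_{A\setminus B}f)$ and then choosing $\delta=\varepsilon/C'$.
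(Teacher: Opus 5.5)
Your final renorming $\Nnorm{f}=\max\{\delta\norm{f}_0,\Lambda(f)\}$ is correct, and so is your greedy estimate: the swap is lossless for the symmetric Marcinkiewicz term $\Lambda$, and the lattice term satisfies $\delta\norm{S_{B\setminus A}(f)}_0\le\delta\, t\,\usdf(\abs{B\setminus A})\lesssim\delta\,\Lambda(S_{A\setminus B}(f))$. The paper only cites this theorem, but this is the mechanism of the cited proof and of the paper's own proof of its quasi-greedy analogue, where $\norm{\cdot}_s$ is maxed with $\delta\norm{\cdot}_t$ and the estimate for $\rho_0$ plays the role of your bound. Two cosmetic points: the lossless swap for $\Lambda$ comes from its rearrangement invariance, not from bidemocracy (which you only need for $\Lambda\lesssim\norm{\cdot}$); and the regularization is unnecessary, because $\usdf$ is nondecreasing and $\usdf^*$ is nondecreasing by Lemma~\ref{lem:DNI}.
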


\begin{theorem}[\cite{DKOSZ2014}*{Corollary 12}]
For all $\epsilon>0$ there is an equivalent norm on $H_1$ and on Tsirelson's space $\Ts$ such that the Haar system, respectively, the unit vector basis, is normalized, $1$-unconditional, and $(1+\epsilon)$-greedy for any $\epsilon>0$.
\end{theorem}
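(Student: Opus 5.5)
The plan is to deduce the statement from Theorem~\ref{thm:DKOSZ:4}. The key point is that both the Haar system of $H_1$ and the unit vector basis of Tsirelson's space $\Ts$ are unconditional, democratic and bidemocratic. Once we know this, Theorem~\ref{thm:DKOSZ:4} applies and gives, for each $\epsilon>0$, an equivalent norm making the basis lattice $1$-unconditional and $(1+\epsilon)$-greedy. Normalization then follows from isometric bidemocracy: the norm of $\xx_n$ is $\varphi(1)$, and we rescale the whole norm by a constant.

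\emph{Step 1: Haar system of $H_1$.} Unconditionality of the Haar system in dyadic $H_1$ follows from the square function description of the norm. For democracy we compute $\norm{\sum_{n\in A}h_n}_{H_1}$ for the $L_1$-normalized Haar functions. This quantity is comparable to $\abs{A}$: the lower bound uses the square function pointwise, and the upper bound is the triangle inequality. So the fundamental function is $\varphi(m)\approx m$.

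For bidemocracy we need the dual fundamental function $\varphi^*(m)\lesssim 1$. The dual is dyadic BMO, and there $\norm{\sum_{n\in A}\pm h_n^*}_{\mathrm{BMO}}\lesssim 1$, where the $h_n^*$ are $L_\infty$-normalized. This is the step I expect to be the most delicate. I would use the Carleson-measure characterization of dyadic BMO: for every dyadic interval $I$, the quantity $\abs{I}^{-1}\sum_{J\subseteq I,\,J\in A}\abs{J}$ stays bounded. I am not certain this sum is bounded for arbitrary $A$, since $A$ could contain all subintervals of $I$ down to many levels. If that fails, I would fall back on the fact that $H_1$ has a greedy bidemocratic basis, since this is all that Theorem~\ref{thm:DKOSZ:4} needs.

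\emph{Step 2: Tsirelson's space.} The unit vector basis of $\Ts$ is $1$-unconditional. By the Casazza--Shura estimates it is also democratic with $\varphi(m)\approx m$, because $\Ts$ is $\ell_1$-like on sets with $\abs{A}\le\min A$ and the lower estimate holds in general. For bidemocracy we show the dual satisfies $\norm{\sum_{n\in A}\xx_n^*}\lesssim 1$, which amounts to $\varphi(m)\varphi^*(m)\lesssim m$. Since the dual of $\Ts$ (Figiel--Johnson) has $c_0$-like upper estimates on admissible sets, this gives the bound, together with a splitting of an arbitrary $A$ into boundedly many admissible pieces of controlled norm.

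\emph{Step 3: conclusion.} Apply Theorem~\ref{thm:DKOSZ:4} in both cases and then normalize.
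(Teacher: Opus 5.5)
\textbf{Verdict: the proposal cannot work.} Neither basis is bidemocratic, so Theorem~\ref{thm:DKOSZ:4} does not apply. The step you flag as delicate is not merely delicate; it is false.

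\textbf{Why bidemocracy fails in both cases.} You correctly get $\usdf[\XB,\XX](m)\approx m$. Bidemocracy would then force $\usdf[\XB^*,\XX^*](m)\lesssim 1$. By Lemma~\ref{lem:Marcin} this gives $\sum_n\abs{\xx_n^*(f)}\le C\norm{f}$ for every finitely supported $f$. Together with the triangle inequality, $\XB$ would be equivalent to the unit vector basis of $\ell_1$.

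For $\Ts$ this is impossible, since $\Ts$ is reflexive. In particular $\norm{\sum_{n=1}^m\xx_n^*}_{\Ts^*}\to\infty$. Moreover, your ``boundedly many admissible pieces'' do not exist: covering $\{1,\dots,m\}$ by sets $E$ with $\abs{E}\le\min E$ already requires at least $\log_2(m+1)$ of them.

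For $H_1$, your own worry is exactly on target. Let $A$ be the full dyadic tree of levels $0,\dots,k$ and $f=\sum_{I\in A}h_I$. The square function of $f$ is identically $\sqrt{k+1}$, so $\norm{f}_{H_1}=\sqrt{k+1}$ for the square-function norm. On the other hand $\langle f,f\rangle=\sum_{I\in A}\abs{I}=k+1$. Hence the dual functional $\sum_{I\in A}\xx_I^*=\sum_{I\in A}h_I$ satisfies $\norm{\sum_{I\in A}h_I}_{(H_1)^*}\ge\sqrt{k+1}\approx\sqrt{\log\abs{A}}$. Consequently $\usdf[\XB,\XX](m)\,\usdf[\XB^*,\XX^*](m)\gtrsim m\sqrt{\log m}$ along $m=2^{k+1}-1$.

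\textbf{A consistency check within this paper.} Suppose either basis were bidemocratic. Both are unconditional, and $\usdf[\XB,\XX](m)\approx m$ has the LRP. Theorem~\ref{thm:greedy} would then give a renorming in which the basis is isometrically greedy with fundamental function $\prim$. All basis vectors would have norm $\prim(1)$, so after scaling the basis is normalized. This contradicts both negative results of \cite{DOSZ2011} quoted in the introduction. That is precisely why the statement is recorded separately from Theorem~\ref{thm:DKOSZ:4}.

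\textbf{Your fallback does not help.} The claim concerns the Haar system itself. Theorem~\ref{thm:DKOSZ:4} only improves the constants of the basis to which it is applied, so another bidemocratic greedy basis of $H_1$ says nothing about the Haar system.

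\textbf{What is actually missing.} The paper gives no proof of this statement; it is quoted from \cite{DKOSZ2014}. As I recall that source, it derives the result from a separate renorming theorem, not from Theorem~\ref{thm:DKOSZ:4}. That theorem dispenses with the bidemocracy hypothesis for a class of greedy bases containing these two examples. A renorming argument that works without bidemocracy is the ingredient your plan lacks.
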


These results represent the strongest known general renorming statements prior to this work. Later developments produced further positive examples: most notably, Garling sequence spaces admit renormings under which their canonical bases are $1$-greedy \cite{AAW2018b}*{Section 3}. Related to this, new work has also produced examples of bases satisfying Property~(A) yet failing to be $1$-suppression unconditional~\cite{AABCO2024}, thus answering another important open problem in the theory (see \cite{AlbiacAnsorena2017b}*{Problem 4.4}).

The central contribution of this paper is a renorming scheme that achieves the isometric theory. Our main theorem states that if a Banach space $\XX$ has a greedy basis whose fundamental function satisfies both the lower regularity property and the upper regularity property, then $\XX$ admits an equivalent norm such that the basis becomes $1$-greedy, lattice $1$-unconditional, and isometrically bidemocratic. Moreover, the new fundamental function can be prescribed up to Dini regularization (a sharp form of regularity that will be explained below). A direct application solves the long-standing open problem of Albiac--Wojtaszczyk: for all $1<p<\infty$, the $L_p$-normalized Haar system can be made isometrically greedy under a renorming of $L_p$. Our renorming method extends to a wide range of settings, which demonstrate that isometrically greedy renormings are surprisingly widespread.

The paper is organized as follows. Section~\ref{Sec:Prelim} collects the terminology and background needed throughout the article, including democracy and superdemocracy, Property~(A), quasi-greedy and almost greedy bases, bidemocracy, regularity properties of fundamental functions, and duality estimates for greedy-type systems. Section~\ref{sect:construction} develops the unified renorming scheme that underlies all subsequent results. This construction is built from expressions of Marcinkiewicz type, quasi-greedy embeddings, and Dini regularizations. Section~\ref{Sect:Renorm} applies this machinery to greedy bases, yielding the main isometric renorming theorem and several structural consequences under type or cotype assumptions. In Subsection~\ref{Sec:Haar} we specialize the theory to the Haar system in $L_p$, providing background on its geometry and concluding with the solution to the long-standing question of whether it can be made $1$-greedy under an equivalent renorming. Subsection~\ref{Sect:Besov} turns to further applications, including Besov spaces and mixed-norm direct sums, and shows that entire classes of function space models admit isometric greedy renormings under mild hypotheses. Section~\ref{Sect:Subsym} treats subsymmetric bases, establishing new examples, such as Schlumprecht's space, of renormings achieving isometric subsymmetry and greedy behavior. Finally, Section~\ref{Sect:Condit} deals with renormings of spaces with conditional bases, and Section~\ref{Sect:Prob} closes with several open problems.
\section{Preliminaries}\label{Sec:Prelim}\noindent
The remainder of the paper will rely frequently on a small collection of notation and auxiliary concepts that we record in his section for ease of reference. The theory of greedy and greedy-type bases has evolved into a broad and sophisticated subject, and it would be neither practical nor desirable to attempt to summarize all of its developments here. Instead, we have focused on the notions most relevant for the renorming scheme developed later in the paper. For a comprehensive exposition of the classical and nonlinear approximation aspects of the subject, we refer the reader to \cite{AlbiacKalton2016}*{Chapter 10}; and for a detailed and up-to-date account of the modern theory, including open problems and structural refinements, we recommend \cite{AABW2021}. We now introduce the standing notation that will be most heavily used throughout.

Given $\alpha=(a_n)_{n\in\Nt}$ and $\beta=(b_n)_{n\in\Nt}$ in $[0,\infty)$, we say that $\alpha$ \emph{$C$-domintates} $\beta$ if $ b_n \le C a_n$ for all $n\in\Nt$. If the family $\alpha$ $C$-dominates the family $\beta$ and $\beta$ $C$-dominates $\alpha$, we say that $\alpha$ and $\beta$ are \emph{$C$-equivalent}. If there is $C\in(0,\infty)$ such that $\alpha$ $C$-domintates $\beta$, we we put $\alpha\lesssim\beta$ and say that $\alpha$ dominates $\beta$. Similarly, we say that $\alpha$ and $\beta$ are equivalent, and we put $\alpha\approx\beta$, if $\alpha\lesssim \beta \lesssim \alpha$.

Given a set $\Nt$, we will denote by $\Pt_{<\infty}(\Nt)$ the set of all finite subsets of $\Nt$. Given $m\in\NN$, $\Pt_{\le m}(\Nt)$ (resp., $\Pt_{\ge m}(\Nt)$) stands for the set of all finite subsets of $\Nt$ of cardinality at most (resp., at least) $m$. By $c_{00}(\Nt)$ we denote the space of all eventually null functions from $\Nt$ to $\FF$, and by $c_{0}(\Nt)$ the set of all functions $f\colon\Nt\to \FF$ such that $\lim_{n\in\Nt} f(n)=0$.

We will denote by $S_\XX$ and $B_\XX$ the unit sphere and the closed unit ball, respectively, of a Banach space $\XX$. In the particular case of the field of scalars we put $\EE=S_\FF$. Given a collection of vectors $\XB=(\xx_n)_{n\in\Nt}$ in $\XX$, a subset $A\in\Pt_{<\infty}(\Nt)$, and $\varepsilon\in\EE^A$ we put
\[
\Ind_{\varepsilon,A}[\XB,\XX]=\sum_{n\in A} \varepsilon_n \, \xx_n.
\]

Given a subset of vectors $W\subset \XX$, the symbol $\enbrak{W}$ stands for its closed linear span.

Suppose that $\XB=(\xx_n)_{n\in\Nt}$ is a \emph{complete minimal system} in $\XX$, i.e., $\enbrak{\XB}=\XX$, and there is a family $\XB^*=(\xx_n^*)_{n\in\Nt}$ in $\XX^{\ast}$ such that
\[
\xx_n^*(\xx_k)=\delta_{n,k}, \quad n,k\in\Nt.
\]
In this case, the family $\XB^*$ is unique, and it is a minimal system itself. If fact, we can identify $\XB^{**}$ with $\XB$ via the canonical map from $\XX$ into $\enbrak{\XB^*}^*$. We will call $\XB^*$ the \emph{dual system} of $\XB$. The \emph{support} of $f\in\XX$ relative to $\XB$ will be the set
\[
\supp(f)=\enbrace{n\in\NN \colon \xx_n^*(f)\not=0}.
\]

When we use the term \emph{basis} we will refer to a complete minimal system such that both $\XB$ and $\XB^*$ are bounded, so that in particular $\XB$ is \emph{semi-normalized}, i.e.,
\[
0<\inf_{n\in\Nt} \norm{\xx_n}, \quad \sup_{n\in\Nt} \norm{\xx_n}<\infty,
\]
and \emph{$M$-bounded}, i.e., $\sup_n \norm{\xx_n} \norm{\xx_n^*}<\infty$.

Often, knowing that a space has a basis, however, is not sufficient and one needs to consider some special types bases, whose definition and main properties we gather below for self-reference.

A sequence $\XB=(\xx_n)_{n\in\Nt}$ in a Banach space $\XX$ is an \emph{unconditional basis} if for all $f\in\XX$ there are unique scalars $(a_n)_{n\in\Nt}$ is $\FF$ such that the series $\sum_{n\in\Nt} a_n\, \xx_n$ converges to $f$ unconditionally. If $\XB$ is an \emph{unconditional basis}, in particular $\XB$ is a complete minimal system and there is a constant $C$ such that \begin{equation}\label{neweq}
\norm{M_\lambda}\le C\norm{\lambda}_\infty\quad \text{for all}\; \lambda\in c_{00}(\Nt),
\end{equation}
where
\[
M_\lambda[\XB,\XX] \colon \XX\to \XX,\quad \xx_n\mapsto \lambda_n \, \xx_n, \quad n\in\Nt.
\]

If $\lambda=\chi_A$ for some $A\in\Pt_{<\infty}(\Nt)$, we set $S_A[\XB,\XX]=M_\lambda[\XB,\XX]$. If the complete minimal system and the space under discussion are clear from context, we simply put $S_A=S_A[\XB,\XX]$ and $M_\lambda=M_\lambda[\XB,\XX]$. If inequality \eqref{neweq} holds for some constant $C$, we say that $\XB$ is \emph{lattice $C$-unconditional} and we will denote by $K_u[\XB]$ the smallest such constant. If $\XB$ is lattice $C$-unconditional, then the linear operator $M_\lambda$ is well-defined for all $\lambda\in\ell_\infty$, and $\norm{M_\lambda}\le C$. If $C\in[1,\infty)$ is such that $\norm{S_A[\XB,\XX]}\le C$ for all $A\subset\Nt$ finite, we say that $\XB$ is \emph{suppression $C$-unconditional}. If $\XB$ is suppression $C$-unconditional for some $C$, then $\XB$ is unconditional. Any unconditional basis becomes lattice $1$-unconditional under a straightforward renorming of the space.

In order to quantify how far a complete minimal system $\XB$ is from being an unconditional basis, in approximation theory we use the sequence of \emph{unconditionality parameters}
\[
\unc_{m}=\unc[\XB,\XX](m) =\sup_{A\in\Pt_{\le m}(\Nt)} \norm{S_A[\XB,\XX]}, \quad m\in\NN.
\]
Note that $\XB$ is unconditional if and only if $(\unc_{m})_{m=1}^\infty$ is bounded.

Let $\beta\colon\NN\to\NN$ be an increasing map, $\XX$ be a Banach space, and $(\xx_n)_{n=1}^\infty$ be complete minimal system modelled over the positive integers. Then $\XB$ is equivalent to its subsequence $(\xx_{\beta(n)})_{n=1}^\infty$ if and only if the right shift
\[
\quad \xx_n \mapsto \xx_{\beta(n)}, \, n\in\NN,
\]
extends to an isomorphic embedding from $\XX$ into $\XX$. A sequence $\XB=(\xx_n)_{n=1}^\infty$ in $\XX$ is said to be a \emph{subsymmetric basis} of $\XX$ if it is an unconditional basis equivalent to all its subsequences. In that case, we define
\[
L_\beta\enpar{\sum_{n=1}^{\infty}a_n\xx_n}=\sum_{n=1}^{\infty}a_n \xx_{\beta(n)}
\]
for each increasing map $\beta\colon\NN\rightarrow \NN.$ If $\XB$ is subsymmetric, then there is a constant $C\in[0,\infty)$ such that
\[
\max\enbrace{ \norm{M_\lambda \circ L_\beta}, \norm{\enpar{M_\lambda \circ L_\beta}^{-1}}}\le C
\]
for all $\lambda\in\EE^\NN$ and all increasing maps $\beta\colon\NN\to\NN$ \cite{Ansorena2018}*{Corollary 3.9}. If the above inequality holds with a particular $C$, we say that $\XB$ is $C$-subsymmetric, and $1$-subsymmetric bases will be called \emph{isometrically subsymmetric}. It is known that any subsymmetric basis is isometrically subsymmetric under a suitable renorming of the space \cite{Ansorena2018}*{Theorem 1.2}.

We consider the following inequality associated with $C\in[1,\infty)$, $A$, $B\in\Pt_{<\infty}(\Nt)$, $\varepsilon\in\EE^A$, $\delta\in\EE^B$ and $g\in\XX$:
\begin{equation}\label{eq:SSL}
\norm{\Ind_{\varepsilon,A}[\XB,\XX]+g}\le C \norm{\Ind_{\delta,B}[\XB,\XX]+g}.
\end{equation}
If $\XB$ is a complete minimal system with dual system $(\xx_n^*)_{n\in\Nt}$ and \eqref{eq:SSL} holds for all $A$, $B\in \Pt_{<\infty}(\Nt)$ with $\abs{A} \le \abs{B}$ and $A\cap B=\emptyset$, all $\varepsilon\in \EE^A$, all $\delta\in\EE^B$, and all $g\in\XX$ with $\supp(g) \subset \Nt \setminus(A\cup B)$ and $\abs{\xx_n^*(g)}\le 1$ for all $n\in\Nt$, we say that $\XB$ is \emph{$C$-symmetric for largest coefficients}. If \eqref{eq:SSL} holds with $g=0$ for all $A$, $B\in \Pt_{<\infty}(\Nt)$ with $\abs{A} \le \abs{B}$, all $\varepsilon\in \EE^A$ and all $\delta\in\EE^B$, we say that $\XB$ is $C$-\emph{super-democratic}. If we also impose that $\varepsilon$ and $\delta$ are constant, we say that $\XB$ is \emph{$C$-democratic}. By a standard convexity argument, replacing $\abs{A} \le \abs{B}$ with $\abs{A}=\abs{B}$ in the definition of $C$-\emph{super-democracy} or $C$-symmetry for largest coefficients results in the same property.

In all cases, if the constant $C$ in \eqref{eq:SSL} is irrelevant, we drop it from the notation. If $C=1$, we say that the corresponding property property holds isometrically. Symmetry for largest coefficients implies superdemocracy, superdemocracy implies democracy, and democracy entails semi-normalization.

We define the \emph{lower superdemocracy} and the \emph{upper superdemocracy function}, also known as \emph{fundamental function}, of $\XB$ by
\begin{align*}
\lsdf(m)=\lsdf[\XB,\XX](m)&=\inf\enbrace{ \norm{\Ind_{\varepsilon,A}[\XB,\XX]} \colon A\in\Pt_{\ge m}(\Nt),\, \varepsilon\in\EE^A},\\
\usdf(m)=\usdf[\XB,\XX](m)&=\sup\enbrace{ \norm{\Ind_{\varepsilon,A}[\XB,\XX]} \colon A\in\Pt_{\le m}(\Nt), \, \varepsilon\in\EE^A},
\end{align*}
respectively, for all $m\in\NN$. In this terminology, $\XB$ is $C$-superdemocratic if and only $\usdf(m)\le C \lsdf(m)$ for all $m\in\NN$.

Given a sequence $\prim\colon\NN\to(0,\infty)$ we define its \emph{dual sequence} by
\[
\prim^*(m)=\frac{m}{\prim(m)}, \quad m\in\NN.
\]
We record for further reference a well-known result.
\begin{lemma}\label{lem:DNI}
Let $\XB=(\xx_n)_{n\in\Nt}$ be a bounded family in a Banach space $\XX$. Assume that $\xx_n\not=0$ for some $n\in\Nt$. Then the dual sequence of the fundamental function of $\XB$ is nondecreasing.
\end{lemma}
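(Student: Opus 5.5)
We must show that $\usdf^*(m)=m/\usdf(m)$ is nondecreasing, i.e. $\usdf(m+1)/(m+1)\le \usdf(m)/m$ for every $m\in\NN$. First note that $\usdf(m)$ is finite, since $\XB$ is bounded and so $\usdf(m)\le m\sup_n\norm{\xx_n}$. It is also strictly positive: taking $A=\{n\}$ with $\xx_n\neq 0$ gives $\usdf(m)\ge\norm{\xx_n}>0$. Hence the dual sequence is well defined.

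The plan is an averaging argument. Fix $A\in\Pt_{\le m+1}(\Nt)$ and $\varepsilon\in\EE^A$, and set $k=\abs{A}$.

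If $k\le m$, then trivially $\norm{\Ind_{\varepsilon,A}}\le \usdf(m)\le \frac{m+1}{m}\usdf(m)$.

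If $k=m+1$, write $\Ind_{\varepsilon,A}$ as an average of subsums. Each index $n\in A$ belongs to exactly $m$ of the $m+1$ sets $A\setminus\{j\}$, $j\in A$. Therefore
\[
\Ind_{\varepsilon,A}=\frac{1}{m}\sum_{j\in A}\Ind_{\varepsilon|_{A\setminus\{j\}},A\setminus\{j\}}.
\]
The triangle inequality then gives
\[
\norm{\Ind_{\varepsilon,A}}\le \frac{m+1}{m}\,\usdf(m).
\]

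Taking the supremum over all such $A$ and $\varepsilon$ yields $\usdf(m+1)\le \frac{m+1}{m}\usdf(m)$. This is exactly $\usdf^*(m)\le\usdf^*(m+1)$.

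There is no genuine obstacle here. The only points needing care are the positivity and finiteness of $\usdf$, which is where the hypotheses that $\XB$ is bounded and has a nonzero vector are used, and the case split on $\abs{A}$.
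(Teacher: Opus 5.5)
Your proof is correct. The finiteness and positivity check, together with the averaging identity $m\,\Ind_{\varepsilon,A}=\sum_{j\in A}\Ind_{\varepsilon|_{A\setminus\{j\}},A\setminus\{j\}}$ for $\abs{A}=m+1$, gives $\usdf(m+1)\le\frac{m+1}{m}\usdf(m)$. This is essentially the convexity argument of Dilworth et al.\ that the paper invokes only by citation: the signs are carried along by restricting $\varepsilon$, which is precisely the ``verbatim translation'' the paper mentions.
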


\begin{proof}
Although the definition of fundamental function used by the authors of \cite{DKKT2003} differs from ours, their proof can be translated verbatim to the notion used here (see \cite{DKKT2003}*{Comments preceding Lemma 2.2}).
\end{proof}

We say that a complete minimal system $\XB$ of $\XX$ is \emph{$C$-bidemocratic}, $1\le C<\infty$, if
\[
\norm{\Ind_{\varepsilon,A}[\XB,\XX]} \norm{\Ind_{\delta,B}[\XB^*,\XX^*]} \le C \max\enbrace{\abs{A},\abs{B}}
\]
for all $A$, $B\in\Pt_{<\infty}(\Nt)$ and all $\varepsilon\in\EE^A$, $\delta\in\EE^B$. We say that $\XB$ is bidemocratic if it is $C$-bidemocratic for some $C$. If $C=1$, we say that $\XB$ is \emph{isometrically bidemocratic.} If $\XB$ is $C$-bidemocratic, then both $\XB$ and $\XB^*$ are $C$-superdemocratic. Hence, any bidemocratic complete minimal system is a basis.

Bidemocracy can be characterized using the fundamental function. In fact, $\XB$ is $C$-bidemocratic if and only if
\begin{equation*}
\usdf[\XB,\XX](m) \, \usdf[\XB^*,\XX^*](m) \le Cm,\quad m\in\NN.
\end{equation*}
Conversely, any complete minimal system satisfies the inequality
\begin{equation}\label{eq:SemiBi}
m \le \usdf[\XB,\XX](m) \, \lsdf[\XB^*,\XX^*](m) ,\quad m\in\NN.
\end{equation}

It is known that subsymmetric bases are bidemocratic \cite{LinTza1977}*{Proposition 3.a.6}.

A finite set $A\subset\Nt$ is said to be a \emph{greedy set} of $f\in\XX$ relative to a basis $\XB$ with coordinate functionals $\XB^*=(\xx_n^*)_{n\in\Nt}$ if
\[
\abs{\xx_n^*(f)} \ge \abs{\xx_k^*(f)}, \quad n\in A, \, k\in\Nt \setminus A.
\]
We denote by $\Gt(f)=\Gt[\XB,\XX](f)$ the set all all greedy sets of $f$. Note that $\emptyset\in\Gt(f)$. Since
\[
\Ft(f)=\Ft[\XB,\XX](f):=(\xx_n^*(f))_{n=1}^\infty\in c_0(\Nt),
\]
for any $A$, $B\in\Gt(f)$ with $A\subset B$ and any $m\in\NN$ with $\abs{A}\le m \le \abs{B}$, there is $G\in\Gt(f)$ with $A\subset G \subset B$ and $\abs{G}=m$.

We will denote by $\Rt[\XB,\XX](f)$ the nonincreasing rearrangement of $\abs{\Ft(f)}$.

Using our notation, a basis $\XB$ is $C$-greedy, $C\in[1,\infty)$, if
\begin{equation}\label{eq:GC}
\norm{ f-S_A(f)} \le C \norm{ f-\sum_{n\in B} a_n\,\xx_n}
\end{equation}
for all $f\in\XX$, all $A\in\Gt(f)$, all $B\in \Pt(\Nt)$ with $\abs{B}\le\abs{A}$, and all $(a_n)_{n\in B}$ in $\FF$. We will denote by $K_g[\XB]$ the smallest constant $C$.

If \eqref{eq:GC} holds in the particular case that
\[
a_n=\xx_n^*(f), \quad n\in B,
\]
$\XB$ is \emph{$C$-almost greedy}. Note that it suffices to impose this condition in the case when $A\cap B=\emptyset$. We say that $\XB$ is a \emph{greedy basis} (resp., \emph{almost greedy basis}) if it is $C$-greedy (resp., \emph{$C$-almost greedy basis}) for some $C$. If $C=1$, we say that $\XB$ is \emph{isometrically greedy} (resp., \emph{isometrically almost greedy}).

Konyagin and Telmakov \cite{KoTe1999} proved that a basis $\XB$ is greedy if and only if it is unconditional and democratic. Under the assumption of unconditionality, democracy, super-democracy and symmetry for largest coefficients are equivalent properties. In this regard, we point out that symmetry for largest coefficients plays an important role when seeking for optimal constants for greediness. Indeed, if $\XB$ is $C$-greedy, then it is suppression $C$-unconditional and $C$-symmetric for largest coefficients. Conversely, if $\XB$ is suppression $C_1$-unconditional and $C_2$-symmetric for largest coefficients, then it is $C_1 C_2$-greedy (see \cite{AlbiacAnsorena2017b}*{Remark 2.6}). We have the following.

\begin{theorem}[\cite{AW2006}*{Theorem 3.4}]\label{thm:AW}
Let $\XB$ be a basis of a Banach space $\XX$. Then $\XB$ is isometrically greedy if and only if it is suppression $1$-unconditional and has Property~(A).
\end{theorem}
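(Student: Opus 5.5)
We prove the two implications separately. Here Property~(A) is the condition (in the form of \cite{AW2006}) that
\[
\norm{\Ind_{\varepsilon,A}+g}=\norm{\Ind_{\delta,B}+g}
\]
whenever $\abs{A}=\abs{B}$, $A$, $B$ and $\supp(g)$ are pairwise disjoint, $\abs{\xx_n^*(g)}\le 1$ for all $n$, and $\varepsilon\in\EE^A$, $\delta\in\EE^B$. Since $A$ and $B$ play symmetric roles, Property~(A) is the same as being $1$-symmetric for largest coefficients, once we use the remark that $\abs{A}\le\abs{B}$ may be replaced by $\abs{A}=\abs{B}$.

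\emph{Necessity.} Assume $\XB$ is $1$-greedy.
\begin{enumerate}[label=(\roman*)]
\item \emph{Suppression $1$-unconditionality.} Take $f$ with finite support and a set $A$. Put $g=f-S_A(f)+t\sum_{n\in A}\xx_n$ with $t>\max_n\abs{\xx_n^*(f)}$, so that $A\in\Gt(g)$. Then $g-S_A(g)=f-S_A(f)$. Applying \eqref{eq:GC} with $B=A$ and $a_n=t-\xx_n^*(f)$ gives $g-\sum_{n\in A}a_n\xx_n=f$. Hence $\norm{f-S_A f}\le\norm{f}$, and density of finitely supported vectors extends this to all $f$.
\item \emph{Property~(A).} Take $A$, $B$, $g$ as above and put $f=\Ind_{\varepsilon,A}+\Ind_{\delta,B}+g$. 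Both $A$ and $B$ are greedy sets of $f$, since every coefficient on $A\cup B$ has modulus $1$ and $\abs{\xx_n^*(g)}\le 1$. Apply \eqref{eq:GC} with greedy set $A$, comparison set $B$ and $a_n=\xx_n^*(f)$. This gives $\norm{\Ind_{\delta,B}+g}\le\norm{\Ind_{\varepsilon,A}+g}$, and exchanging $A$ and $B$ gives equality.
\end{enumerate}

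\emph{Sufficiency.} Assume $\XB$ is suppression $1$-unconditional with Property~(A). Suppression $1$-unconditionality combined with convexity shows that $\norm{M_\lambda}\le1$ for real $\lambda\in[0,1]^{\Nt}$; this follows because $M_\lambda$ is a convex combination of projections $S_E$. Fix $f$, $A\in\Gt(f)$, and $B$ with $\abs{B}\le\abs{A}$; by an enlargement argument we may assume $\abs{B}=\abs{A}$. Let $h=f-\sum_{n\in B}a_n\xx_n$. The goal is $\norm{f-S_Af}\le\norm{h}$.
\begin{enumerate}[label=(\roman*)]
\item Dropping the coordinates in $A\cap B$ changes nothing on the left side. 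On the right it is controlled by suppression, because $f-S_Af$ vanishes on $A\cap B$. This reduces the problem to $A\cap B=\emptyset$.
\item Put $t=\min_{n\in A}\abs{\xx_n^*(f)}$. Write $f-S_Af=u+v$ with $u=S_{B}(f-S_Af)$, so that $v$ is supported on $\Nt\setminus(A\cup B)$. Then $\abs{\xx_n^*(u)}\le t$ and $\abs{\xx_n^*(v)}\le t$ for every $n$.
\item By suppression, $\norm{h}\ge\norm{S_{\Nt\setminus B}h}=\norm{S_Af+v}$.
\item Shrinking the coefficients of $S_Af$ to modulus $t$ through a contraction $M_\lambda$ with $\lambda_n\in[0,1]$ gives $\norm{S_Af+v}\ge\norm{t\Ind_{\varepsilon,A}+v}$, where $\varepsilon$ is the sign vector of $f$ on $A$.
\item Property~(A), applied after normalizing by $t$, gives $\norm{t\Ind_{\varepsilon,A}+v}=\norm{t\Ind_{\delta,B}+v}$ for any signs $\delta\in\EE^B$.
\item Finally, $u$ has coefficients of modulus at most $t$ on $B$. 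So $u+v$ lies in the convex hull of $\{t\Ind_{\delta,B}+v\colon\delta\in\EE^B\}$, because each coordinate in the closed disc is an average of points of modulus $t$. Therefore $\norm{u+v}\le\norm{h}$.
\end{enumerate}
The case $t=0$ is trivial.

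\emph{Main obstacle.} The main difficulty is the bookkeeping in step (i), namely keeping the overlap reduction and the equal-cardinality reduction compatible. For $\abs{B}<\abs{A}$, enlarge $B$ by points of $A$ carrying coefficient $a_n=\xx_n^*(f)$. This leaves $h$ unchanged, and the sets $A\setminus B$ and $B\setminus A$ then have equal size.
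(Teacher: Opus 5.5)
Your proof is correct. The paper gives no proof of its own and imports the result from Albiac--Wojtaszczyk, and your sufficiency argument (reduce to disjoint $A$, $B$ of equal size, truncate the greedy part to modulus $t$ by a convex combination of coordinate projections, swap via Property~(A), then average over unimodular signs on $B$) is the standard one: it is the fact recalled in Section~\ref{Sec:Prelim}, that suppression $C_1$-unconditionality plus $C_2$-symmetry for largest coefficients gives $C_1C_2$-greediness, specialized to $C_1=C_2=1$.

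There is one slip, in your last paragraph. Enlarging $B$ by a set $N\subset A\setminus B$ with $a_n=\xx_n^*(f)$ does not leave $h$ unchanged; it replaces $h$ by $h-S_N(h)$. This is harmless because $\norm{h-S_N(h)}\le\norm{h}$, and alternatively you can take $a_n=0$ on $N$, which really does leave $h$ unchanged.
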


The study of \emph{quasi-greedy} and \emph{almost greedy} bases has
provided crucial technical tools for renorming arguments. Recall that a basis $\XB$ is $C$-quasi-greedy (\cite{KoTe1999}), $1\le C<\infty$, if
\[
\norm{S_A(f)} \le C \norm{f}, \quad f\in\XX, \, A\in\Gt(f),
\]
and we say $\XB$ is a \emph{quasi-greedy basis} if it is $C$-quasi-greedy for some $C$ . If $\XB$ is quasi-greedy, then there is a (possiby different) constant $C\in[1,\infty)$ such that
\[
\norm{f-S_A(f)} \le C \norm{f}, \quad f\in\XX, \, A\in\Gt(f).
\]
If this inequality holds with a particular constant $C$, then we say that $\XB$ is suppression $C$-quasi-greedy. If $\XB$ is quasi-greedy, then democracy, super-democracy and symmetry for largest coefficients are still equivalent properties. It is known \cite{DKKT2003}*{Theorem 3.3} that a basis is almost greedy if and only if it quasi-greedy and democratic. Quantitatively, if $\XB$ is suppression $C_1$-quasi-greedy and $C_2$-symmetric for largest coefficients, then it is almost $C_1 C_2$-greedy (see \cite{AlbiacAnsorena2017b}*{Theorem 3.3}). Besides, in the isometric case the quasi-greedy condition can be dropped:

\begin{theorem}[\cite{AlbiacAnsorena2017b}*{Theorem 2.3}]\label{thm:1AG}
Let $\XB$ be a basis of a Banach space $\XX$. Then $\XB$ is isometrically almost greedy if and only if it has Property~(A).
\end{theorem}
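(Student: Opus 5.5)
The plan is to prove the two implications separately. Property~(A) will be used in the form equivalent to isometric symmetry for largest coefficients: \eqref{eq:SSL} holds with $C=1$ whenever $\abs{A}=\abs{B}$, $A\cap B=\emptyset$, $\supp(g)\cap(A\cup B)=\emptyset$ and $\abs{\xx_n^*(g)}\le 1$ for all $n$. By the convexity remark in the preliminaries, this also covers $\abs{A}\le\abs{B}$.

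\emph{Necessity.} Assume $\XB$ is $1$-almost greedy. Fix $A,B,\varepsilon,\delta,g$ as in the definition, and put $t>1$ and
\[
f=t\,\Ind_{\varepsilon,A}+\Ind_{\delta,B}+g .
\]
Then $A\cup B$ is a greedy set of $f$, because every other coefficient has modulus at most $1$. Since $A\cap B=\emptyset$ and $\abs{A}=\abs{B}$, $A$ is also a greedy set of $f-\Ind_{\delta,B}$, so $1$-almost greediness gives
\[
\norm{\Ind_{\delta,B}+g}=\norm{f-S_A(f)}\le \norm{f-S_B(f)}=\norm{t\,\Ind_{\varepsilon,A}+g}.
\]
This is the wrong direction, so I would instead apply the definition to
\[
f=\Ind_{\varepsilon,A}+t\,\Ind_{\delta,B}+g .
\]
Here $B\in\Gt(f)$ and $\abs{A}\le\abs{B}$, so
\[
\norm{\Ind_{\varepsilon,A}+g}=\norm{f-S_B(f)}\le\norm{f-S_A(f)}=\norm{t\,\Ind_{\delta,B}+g}.
\]
Letting $t\to1^+$ gives \eqref{eq:SSL} with $C=1$, that is, Property~(A). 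No unconditionality is needed anywhere in this direction.

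\emph{Sufficiency.} Assume isometric symmetry for largest coefficients. Take $f$, $A\in\Gt(f)$ and $B$ with $\abs{B}\le\abs{A}$; it suffices to treat $A\cap B=\emptyset$. Put $\alpha=\min_{n\in A}\abs{\xx_n^*(f)}$. The goal is
\[
\norm{f-S_A f}\le\norm{f-S_B f}.
\]
Write
\[
f-S_Bf=S_A f+h,\qquad f-S_Af=S_B f+h,
\]
where $h$ is supported off $A\cup B$ and satisfies $\abs{\xx_n^*(h)}\le\alpha$, while the coefficients of $S_Bf$ are also at most $\alpha$. After scaling by $\alpha$, the coefficients of $S_Af/\alpha$ have modulus at least $1$.

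The argument proceeds in two steps.

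\textbf{Step 1.} Show that
\[
\norm{\alpha\,\Ind_{\varepsilon,A}+h}\le\norm{S_Af+h},
\]
where $\varepsilon_n=\operatorname{sgn}\xx_n^*(f)$. Without unconditionality this cannot come from a lattice argument. Instead, $\alpha\,\Ind_{\varepsilon,A}+h$ should be written as a convex combination of vectors of the form $S_Af$ with some coordinates of $A$ moved to equal-size sets disjoint from everything. Since the support set $\Nt$ may be finite, I would rather vary the coefficients on $A$ one at a time. Coefficient $n$ is changed from $\xx_n^*(f)$ to $\alpha\varepsilon_n$ by convexity between the value $\xx_n^*(f)$ and the value $-\xx_n^*(f)+2\alpha\varepsilon_n$. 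The latter is obtained from Property~(A) by treating the singleton $\{n\}$ against itself with a sign change: take $B=\{n\}$ and $g$ equal to the rest. The rest must have coefficients bounded by the one being flipped, which does hold after scaling. This is the standard Albiac--Wojtaszczyk flattening trick.

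\textbf{Step 2.} Use convexity on $B$, writing $S_Bf$ as an average of $\alpha\,\Ind_{\delta,B}$ over signs $\delta$. Then apply \eqref{eq:SSL} with $C=1$ to each term to get
\[
\norm{\alpha\,\Ind_{\delta,B}+h}\le\norm{\alpha\,\Ind_{\varepsilon,A'}+h}
\]
for $A'\subset A$ with $\abs{A'}=\abs{B}$. The leftover part of $A$ is handled by including it in $g$.

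The main obstacle is Step 1, namely justifying these convexity moves using only isometric Property~(A) and no suppression unconditionality. The sign-flip-of-a-single-largest-coordinate device is what I would try first.
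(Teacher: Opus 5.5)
Your necessity argument is correct, and your sufficiency skeleton is the right one. The gap is in Step 1: as written, it does not follow from the form of Property (A) you assumed. (The paper does not prove this statement; it cites Albiac--Ansorena.)

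Necessity, by testing $1$-almost greediness on $\Ind_{\varepsilon,A}+t\,\Ind_{\delta,B}+g$, works already with $t=1$. It gives \eqref{eq:SSL} with $C=1$ for all disjoint $A,B$ with $\abs{A}\le\abs{B}$. For sufficiency the chain
\[
\norm{f-S_A(f)}=\norm{S_B(f)+h}\le\sup_{\delta\in\EE^B}\norm{\alpha\Ind_{\delta,B}+h}\le\norm{\alpha\Ind_{\varepsilon,A}+h}\le\norm{S_A(f)+h}=\norm{f-S_B(f)}
\]
is exactly what is needed. The second inequality should be the $\abs{B}\le\abs{A}$ form applied directly. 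Your device of ``including the leftover part of $A$ in $g$'' would insert $\alpha\Ind_{\varepsilon,A\setminus A'}$ into a vector that does not contain it.

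Step 1 has two defects. First, the flip $\norm{c\,\xx_n+r}=\norm{-c\,\xx_n+r}$ is not an instance of \eqref{eq:SSL} in the form you fixed, which requires $A\cap B=\emptyset$. ``$\{n\}$ against itself'' is precisely the excluded case. Producing the flip from disjoint instances needs a spare index. For infinite $\Nt$ you could route through a far-out $j$ with $\abs{\xx_j^*(r)}$ tiny and pass to the limit, but you do not give that argument. Second, \eqref{eq:SSL} only acts on a coordinate whose modulus dominates every other coefficient, and your claim that this ``holds after scaling'' is false. Scaling does not change the relative sizes inside $A$. If you treat a coordinate of $A$ while a larger one is still untouched, nothing applies.

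Both defects disappear if you use a case you have already granted yourself instead of a flip. The $\abs{A}\le\abs{B}$ form with empty small set gives $\norm{g}\le\norm{\Ind_{\delta,B}+g}$.

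Enumerate $A=\{n_1,\dots,n_k\}$ with $\abs{\xx_{n_1}^*(f)}\ge\dots\ge\abs{\xx_{n_k}^*(f)}=\alpha$. Change the coefficient at $n_i$ to $\alpha\varepsilon_{n_i}$ in this order. At stage $i$ the current vector is $c\,\xx_{n_i}+r$, where $c=\xx_{n_i}^*(f)$, $n_i\notin\supp(r)$, and $\sup_k\abs{\xx_k^*(r)}\le\abs{c}$. The last bound holds because treated coordinates sit at level $\alpha$, untreated ones are at most $\abs{c}$, and $h$ is bounded by $\alpha$. Hence $\norm{r}\le\norm{c\,\xx_{n_i}+r}$. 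Now $\alpha\varepsilon_{n_i}\xx_{n_i}+r$ is the convex combination of $c\,\xx_{n_i}+r$ and $r$ with weights $\alpha/\abs{c}$ and $1-\alpha/\abs{c}$, so the norm does not increase.

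No sign flips or spare indices are involved. So if you read Property (A) as \eqref{eq:SSL} for $\abs{A}\le\abs{B}$, which is exactly what your necessity argument proves, the proof is complete for any index set.

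By contrast, passing from the equal-size disjoint form to the $\abs{A}\le\abs{B}$ form genuinely needs $\Nt$ infinite. Take $\Nt=\{1,2\}$ and $\xx_1=(1,0)$, $\xx_2=(-1,\tfrac{1}{10})$ in $\ell_\infty^2$. The equal-size form only asks $\norm{\xx_1}=\norm{\xx_2}$, which holds. Yet $\norm{\tfrac{9}{10}\xx_2}=\tfrac{9}{10}>\tfrac{1}{10}=\norm{\xx_1+\tfrac{9}{10}\xx_2}$, so both the $\abs{A}\le\abs{B}$ form and $1$-almost greediness fail. So going one coordinate at a time was the right instinct. But finiteness of $\Nt$ is a question of which form of Property (A) you assume, not of how you move the coordinates.
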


Several important results on quasi-greedy bases underpin our approach:

\begin{itemize}[leftmargin=*]
\item Quasi-greedy bases satisfy Lorentz-type embeddings of the form
\begin{equation}\label{eq:TCG}
\lsdf[\XB,\XX](m) \Rt[\XB,\XX](f) (m) \le C^2 \norm{f}, \quad m\in\NN,
\end{equation}
relating the rearrangement of coefficients to the norm (see \cite{AABW2021}*{Chapter 9}, cf.\@ \cite{DKKT2003}*{Lemma 2.2}).
\item Quasi-greedy bases have slow-growing unconditionality parameters:
\[
\unc[\XB,\XX](m)\lesssim \log(1+m), \quad m\in\NN,
\]
(see \cite{DKK2003}*{Lemma 8.2}). Besides, if $\XX$ is superreflexive, then this estimate can be improved. Namely,
\[
\unc[\XB,\XX](m)\lesssim \log^\alpha(1+m), \quad m\in\NN,
\]
for some $0<\alpha<1$ (see \cite{AAGHR2015}*{Theorem 1.1}).
\end{itemize}

A key role in our work is also played by regularity properties of the fundamental function. A sequence $\prim\colon\NN\to(0,\infty)$ is said to have the \emph{lower regularity property} (LRP for short) if there is $r\in\NN$ such that
\[
\prim(rm)\ge 2 \prim(m), \quad m\in\NN.
\]
In turn, $\prim$ is said to have the \emph{upper regularity property} (URP for short) if
its \emph{dual sequence} has the LRP, i.e., there is $r\in\NN$ such that
\[
\prim(rn)\le \frac{1}{2} r \prim(m), \quad m\in\NN.
\]

Regularity of the fundamental function is a geometric property reflecting probabilistic features of the space, and it is critically involved in scaling arguments in greedy-type renormings. LRP and URP arise naturally in spaces with nontrivial type or cotype via the following results of Dilworth et al.\@ \cite{DKKT2003}.

\begin{lemma}[\cite{DKKT2003}*{Proposition 4.4}]\label{lem:URPBD}
Let $\XB$ be an almost greedy basis of Banach space $\XX$. If the fundamental function of $\XB$ has the URP, then $\XB$ is bidemocratic.
\end{lemma}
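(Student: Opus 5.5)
The plan is to bound $\usdf[\XB^*,\XX^*](m)$ from above by a constant multiple of $m/\usdf[\XB,\XX](m)=\prim^*(m)$, where $\prim=\usdf[\XB,\XX]$. By the characterization of bidemocracy through fundamental functions, this is exactly what has to be shown. So we fix $B$ with $\abs{B}\le m$, $\delta\in\EE^B$, and $f\in B_\XX$, and aim for a bound of the form
\[
\abs{\Ind_{\delta,B}[\XB^*,\XX^*](f)}\le\sum_{n\in B}\abs{\xx_n^*(f)}\le \sum_{k=1}^m \Rt(f)(k).
\]
To control the rearrangement we use the Lorentz-type embedding \eqref{eq:TCG}, valid because almost greedy bases are quasi-greedy. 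It gives $\Rt(f)(k)\le C^2/\lsdf[\XB,\XX](k)$. Since an almost greedy basis is democratic, hence superdemocratic, we have $\lsdf(k)\ge c\,\usdf(k)=c\,\prim(k)$. Hence
\[
\abs{\Ind_{\delta,B}[\XB^*,\XX^*](f)}\lesssim \sum_{k=1}^m \frac{1}{\prim(k)}=\sum_{k=1}^m\frac{\prim^*(k)}{k}.
\]

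The key step, and the main obstacle, is to show that the URP forces
\[
\sum_{k=1}^m \frac{\prim^*(k)}{k}\lesssim \prim^*(m).
\]
This is the classical fact that a sequence with the LRP dominates its own Hardy/Dini average. Here $\prim^*$ has the LRP by definition of URP, and it is nondecreasing by Lemma~\ref{lem:DNI}. To prove it, I would split $\{1,\dots,m\}$ into $r$-adic blocks $(m r^{-j-1},m r^{-j}]$ for $j\ge0$. On the $j$-th block, monotonicity gives $\prim^*(k)\le\prim^*(\lceil m r^{-j}\rceil)$. Iterating the LRP gives $\prim^*(m r^{-j})\le 2^{-j}\prim^*(m)$, up to harmless rounding that we handle by monotonicity and the LRP constant. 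The sum of $1/k$ over a block is at most about $\log r+1$. Summing the resulting geometric series in $j$ yields the bound with a constant depending only on $r$ and $C$.

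Finally, taking the supremum over $f\in B_\XX$ gives $\usdf[\XB^*,\XX^*](m)\lesssim \prim^*(m)=m/\usdf[\XB,\XX](m)$. Multiplying through gives $\usdf[\XB,\XX](m)\,\usdf[\XB^*,\XX^*](m)\lesssim m$, which is bidemocracy. The only delicate bookkeeping is the rounding in the $r$-adic decomposition and the small values of $k$; both are absorbed using the monotonicity of $\prim^*$ and the fact that $\XB$ is semi-normalized, so $\prim(1)\approx1$.
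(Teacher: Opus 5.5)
Your proposal is correct and follows the standard argument of Dilworth, Kalton, Kutzarova and Temlyakov, whose Proposition~4.4 the paper quotes without proof. You bound $\sum_{n\in B}\abs{\xx_n^*(f)}\le\sum_{k\le m}\Rt(f)(k)\lesssim\sum_{k\le m}1/\prim(k)$ using the quasi-greedy Lorentz estimate together with superdemocracy. You then use that the URP (the LRP of the nondecreasing sequence $\prim^*$) gives $\sum_{k\le m}\prim^*(k)/k\lesssim\prim^*(m)$.

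A small simplification: in the $r$-adic blocks use $n_j=\lfloor mr^{-j}\rfloor$ rather than the ceiling. Then $r^jn_j\le m$, so $\prim^*(k)\le\prim^*(n_j)\le 2^{-j}\prim^*(m)$ for every $k$ in the $j$-th block, with no rounding correction needed.
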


\begin{lemma}[\cite{DKKT2003}*{Proposition 4.1(1)}]\label{lem:CTLRP}
Let $\XB$ be a quasi-greedy basis of a Banach space $\XX$ with nontrivial cotype. Then $\lsdf[\XB,\XX]$ has the LRP.
\end{lemma}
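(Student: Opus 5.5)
The idea is to combine the cotype inequality with the fact that, for a quasi-greedy basis, the norm of $\Ind_{\varepsilon,A}$ changes by at most a constant factor when the signs $\varepsilon$ are changed. Averaging over random signs then gives a gain of $r^{1/q}$, which beats any fixed constant once $r$ is large.

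\textbf{Step 1 (sign stability).} Let $\XB$ be $C$-quasi-greedy. I would show there is $K=K(C)$ such that
\[
\norm{\Ind_{\varepsilon',A}}\le K\norm{\Ind_{\varepsilon,A}}
\]
for all finite $A$ and all $\varepsilon,\varepsilon'\in\EE^A$.

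First, for $B\subset A$, consider $f=\Ind_{\varepsilon,A}+t\,\Ind_{\varepsilon,B}$ with $t>0$. Then $B\in\Gt(f)$, so
\[
(1+t)\norm{\Ind_{\varepsilon,B}}\le C\norm{f}.
\]
Letting $t\to0$ gives $\norm{S_B(\Ind_{\varepsilon,A})}\le C\norm{\Ind_{\varepsilon,A}}$.

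In the real case, let $B=\{n\in A\colon \varepsilon'_n=\varepsilon_n\}$. Then $\Ind_{\varepsilon',A}=2S_B(\Ind_{\varepsilon,A})-\Ind_{\varepsilon,A}$, so $K=2C+1$ works.

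In the complex case, write $\varepsilon'_n=\varepsilon_n\eta_n$ with $\eta_n\in\EE$. Split $\eta$ into real and imaginary parts and apply the previous step to each. Each such part is a vector with coefficients in $[-1,1]$, hence a convex combination of vectors with coefficients $\pm1$, which lets us bound it by a constant multiple of $\norm{\Ind_{\varepsilon,A}}$. This gives some absolute multiple of $C$ as $K$.

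\textbf{Step 2 (cotype averaging).} Let $\XX$ have cotype $q<\infty$ with constant $C_q$. Fix $m$, a set $A$ with $\abs{A}\ge rm$, and signs $\varepsilon\in\EE^A$. Partition $A$ into $r$ disjoint sets $A_1,\dots,A_r$ with $\abs{A_j}\ge m$ for each $j$.

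For every choice of signs $\theta\in\{\pm1\}^r$, the vector $\sum_j\theta_j\Ind_{\varepsilon,A_j}$ equals $\Ind_{\varepsilon',A}$ for some $\varepsilon'\in\EE^A$. By Step~1 and cotype,
\[
K\norm{\Ind_{\varepsilon,A}}\ \ge\ \mathbb{E}_\theta\Big\|\sum_{j=1}^r\theta_j\Ind_{\varepsilon,A_j}\Big\|\ \ge\ C_q^{-1}\Big(\sum_{j=1}^r\norm{\Ind_{\varepsilon,A_j}}^q\Big)^{1/q}\ \ge\ C_q^{-1}r^{1/q}\lsdf(m).
\]
If the cotype inequality is only available in its $L_2$-averaged form, the $L_1$ average used here follows from Kahane's inequality, at the cost of a constant.

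\textbf{Step 3 (conclusion).} Taking the infimum over $A$ and $\varepsilon$ yields $\lsdf(rm)\ge (KC_q)^{-1}r^{1/q}\lsdf(m)$. Choosing $r$ with $r^{1/q}\ge 2KC_q$ gives the LRP.

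For this to be meaningful we need $\lsdf(m)>0$. This holds because $\XB$ is a basis, so $\XB^*$ is bounded, and hence $\norm{\Ind_{\varepsilon,A}}\ge \sup_n\norm{\xx_n^*}^{-1}>0$.

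The main obstacle is Step~1 in the complex case, where the convexity argument needs some care. The cotype step itself is routine.
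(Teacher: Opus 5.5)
Your proposal is correct and is essentially the standard argument behind this result, which the paper cites from Dilworth, Kalton, Kutzarova and Temlyakov without giving a proof of its own: you combine sign-stability of $\Ind_{\varepsilon,A}$ under quasi-greediness with cotype averaging over a partition of $A$ into $r$ pieces, each of size at least $m$. One simplification: Step~2 only flips whole blocks of a fixed $\varepsilon$ by real signs $\pm1$, so the real-case constant $K=2C+1$ already works over either field, and the complex-case argument you flag as the main obstacle is never needed; also, the positivity bound should read $\lsdf(m)\ge(\sup_n\norm{\xx_n^*})^{-1}$.
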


\begin{lemma}[\cite{DKKT2003}*{Proposition 4.1(2)}]\label{lem:TURP}
Let $\XB$ be a quasi-greedy basis of Banach space $\XX$ with nontrivial type. Then $\usdf[\XB,\XX]$ has the URP.
\end{lemma}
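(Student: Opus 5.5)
The plan is to combine Rademacher type with the fact that, for a quasi-greedy basis, the norms of the vectors $\Ind_{\varepsilon,A}$ are essentially insensitive to the choice of signs $\varepsilon$. Suppose $\XX$ has type $p>1$ with type constant $T$, and $\XB$ is $C$-quasi-greedy. We must find $r\in\NN$ with $\usdf(rm)\le \frac{r}{2}\usdf(m)$ for all $m\in\NN$. Fix $m$, a set $A\in\Pt_{\le rm}(\Nt)$ and signs $\varepsilon\in\EE^A$. Partition $A$ into $r$ disjoint sets $A_1,\dots,A_r$, each of cardinality at most $m$, and put $f_j=\Ind_{\varepsilon,A_j}$.

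\textbf{Step 1 (sign stability).} There is a constant $K$, depending only on $C$ and the field $\FF$, such that
\[
\norm{\Ind_{\varepsilon,A}}\le K\norm{\Ind_{\delta,A}}
\]
for all finite $A$ and all $\varepsilon,\delta\in\EE^A$. To see this, note that every $B\subset A$ is a greedy set of $\Ind_{\delta,A}$, so $\norm{\Ind_{\delta,B}}\le C\norm{\Ind_{\delta,A}}$. In the real case, write $\Ind_{\varepsilon,A}$ as a difference of two such restrictions by splitting $A$ according to whether $\varepsilon_n=\delta_n$ or $\varepsilon_n=-\delta_n$. This gives $K=2C$. In the complex case, split $\varepsilon_n\bar\delta_n$ into real and imaginary parts and then use the standard convexity argument, which gives $K$ of order $4C$. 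In particular, for each choice of signs $(\eta_j)_{j=1}^r\in\{\pm1\}^r$, the vector $\sum_j\eta_j f_j$ equals $\Ind_{\varepsilon',A}$ for a suitable $\varepsilon'$. Hence
\[
\norm{\Ind_{\varepsilon,A}}\le K\norm{\sum_{j=1}^r \eta_j f_j}.
\]

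\textbf{Step 2 (average and apply type).} Averaging the last inequality over the $2^r$ choices of $\eta$ and applying the type-$p$ inequality gives
\[
\norm{\Ind_{\varepsilon,A}}\le K\,\EE_\eta\norm{\sum_{j=1}^r\eta_j f_j}\le KT\Bigl(\sum_{j=1}^r\norm{f_j}^p\Bigr)^{1/p}\le KT\,r^{1/p}\,\usdf(m),
\]
since $\abs{A_j}\le m$ for each $j$. Taking the supremum over $A$ and $\varepsilon$ yields $\usdf(rm)\le KT\,r^{1/p}\usdf(m)$.

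\textbf{Step 3 (choose $r$).} Because $p>1$, we may choose $r$ so large that $KT\,r^{1/p}\le r/2$. Then $\usdf(rm)\le \frac r2\usdf(m)$ for every $m$, which is the URP.

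The only delicate point is Step 1. The basis is not assumed to be unconditional, so we cannot simply move random signs in and out of the norm. Quasi-greediness has to be used to control sign changes on vectors whose coefficients all have modulus one. This works precisely because every subset of the support of such a vector is a greedy set, so the greedy projections give the required bound.
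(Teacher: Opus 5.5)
Your proof is correct. It follows the standard route behind the Dilworth--Kalton--Kutzarova--Temlyakov result that the paper cites without proof: sign-insensitivity of $\norm{\Ind_{\varepsilon,A}}$ under quasi-greediness, a splitting of $A$ into $r$ blocks of size at most $m$, and Rademacher type $p>1$, which give $\usdf(rm)\le KT\,r^{1/p}\usdf(m)$. One simplification: the sign changes you use in Step 2 are only real flips $\pm1$ of $\varepsilon$ on the blocks, so your real-case splitting already gives $K=2C$ over either field, and the complex convexity argument is unnecessary.
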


Linear properties of bases, such as unconditionality and subymmetry, trivially pass to dual bases. Since quasi-greediness is of nonlinear nature, we cannot rely on dual bases to inherit this property in general. However, quasi-greediness passes in a controlled way to dual bases and some interesting partial results have been achieved in this regard. We will use the following one.

\begin{theorem}[\cite{DKKT2003}*{Theorem 5.4}]\label{thm:DKKTDual}
Let $\XB$ be a bidemocratic quasi-greedy basis of a Banach space $\XX$. Then $\XB^*$ is a quasi-greedy basis of its closed linear span in $\XX^*$.
\end{theorem}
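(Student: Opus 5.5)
The plan is to prove the quasi-greedy inequality for $\XB^*$ by duality, with the greedy set taken relative to $f^*$. Fix $f^*$ in $\enbrak{\XB^*}$ (by density it is enough to take finitely supported $f^*$), write $a_n=f^*(\xx_n)$, let $A\in\Gt[\XB^*](f^*)$ with $\abs{A}=m$, and put $t=\min_{n\in A}\abs{a_n}$. Since $S_A(f^*)=\sum_{n\in A}a_n\,\xx_n^*$, we have
\[
\norm{S_A(f^*)}=\sup\enbrace{\abs{\textstyle\sum_{n\in A} a_n b_n} \colon g\in B_\XX},\qquad b_n=\xx_n^*(g),
\]
so it suffices to bound $\abs{\sum_{n\in A}a_nb_n}\lesssim\norm{f^*}$ uniformly over $g\in B_\XX$. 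The constants available are the quasi-greedy constant $C$ of $\XB$ and the bidemocracy constant $D$. Bidemocracy gives $\usdf[\XB,\XX](m)\,\usdf[\XB^*,\XX^*](m)\le Dm$, and it makes $\XB$ $D$-superdemocratic, i.e.\ $\usdf[\XB,\XX]\le D\,\lsdf[\XB,\XX]$.

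Given $g$, choose $B\in\Gt[\XB,\XX](g)$ with $\abs{B}=m$, and split
\[
\sum_{n\in A}a_nb_n=\sum_{n\in A\setminus B}a_nb_n+\langle f^*,S_B(g)\rangle-\sum_{n\in B\setminus A}a_nb_n .
\]
\emph{Middle term.} Quasi-greediness of $\XB$ gives $\abs{\langle f^*,S_B(g)\rangle}\le C\norm{f^*}$.

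\emph{First term.} For $n\notin B$ we have $\abs{b_n}\le\Rt[\XB,\XX](g)(m)$, and by \eqref{eq:TCG} this is at most $C^2/\lsdf[\XB,\XX](m)$. On the other hand, choosing unimodular signs $\varepsilon_n$ with $\varepsilon_n a_n=\abs{a_n}$ gives
\[
\sum_{n\in A}\abs{a_n}=\langle f^*,\Ind_{\varepsilon,A}[\XB,\XX]\rangle\le\norm{f^*}\,\usdf[\XB,\XX](m).
\]
Hence the first term is at most $C^2\,\usdf(m)\norm{f^*}/\lsdf(m)\le C^2D\norm{f^*}$ by superdemocracy.

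\emph{Last term.} For $n\notin A$ we have $\abs{a_n}\le t$, because $A$ is greedy for $f^*$. The previous display also gives $mt\le\sum_{n\in A}\abs{a_n}\le\norm{f^*}\usdf[\XB,\XX](m)$. Choosing signs $\delta_n$ with $\delta_n b_n=\abs{b_n}$ and testing $g$ against $\Ind_{\delta,B\setminus A}[\XB^*,\XX^*]$ gives
\[
\sum_{n\in B\setminus A}\abs{b_n}\le\usdf[\XB^*,\XX^*](m).
\]
Therefore the last term is at most
\[
t\,\usdf[\XB^*,\XX^*](m)\le\frac{\usdf[\XB,\XX](m)\,\usdf[\XB^*,\XX^*](m)}{m}\,\norm{f^*}\le D\norm{f^*}.
\]

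Adding the three bounds yields $\norm{S_A(f^*)}\le(C+C^2D+D)\norm{f^*}$, which is the quasi-greedy inequality for $\XB^*$.

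The main obstacle I expect is the one the splitting is designed to avoid: $A$ is not a greedy set for $g$, so $\norm{S_A(g)}$ cannot be controlled directly. The fix is to swap $A$ for $B$ at the cost of the two "mismatch" sums, and to control each of them by pairing a coefficient bound on one side with a sign-sum bound on the other. Bidemocracy enters twice, once as superdemocracy and once in its product form.

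It remains to check that $\XB^*$ is a basis of $\enbrak{\XB^*}$. It is bounded because $\XB$ is a basis. Its dual system is $\XB$, which is bounded, and it is complete in its closed span by definition. The density reduction to finitely supported $f^*$ then extends the inequality to all of $\enbrak{\XB^*}$.
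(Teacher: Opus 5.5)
Your argument is correct and is essentially the proof of \cite{DKKT2003}*{Theorem 5.4}, which the paper cites rather than reproves: swap the greedy set $A$ of $f^*$ for a greedy set $B$ of the test vector $g$, then absorb the two mismatch sums using the Lorentz-type estimate \eqref{eq:TCG} (via superdemocracy) on one side and bidemocracy on the other. The density reduction to finitely supported $f^*$ is unnecessary, because none of your estimates uses finite support, so the argument applies verbatim to every $f^*\in\enbrak{\XB^*}$. That is fortunate: greedy sets are not stable under norm approximation, so the reduction as you state it would actually need a small perturbation argument to be justified.
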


We will use the fact that if $\XB$ is quasi-greedy, then the above-mentioned canonical map from $\XX$ to $\enbrak{\XB^*}^*$ is an isomorphic embedding. This was first proved for bidemocratic bases \cite{AABW2021}*{Theorem 10.15} and later on extended to general quasi-greedy bases in \cite{Berasategui2025}.

\begin{proposition}[\cite{Berasategui2025}*{Theorem 3.4}]\label{prop:reflex}
Let $C\in[1,\infty)$ and $\XB$ be a $C$-quasi-greedy basis of a Banach space $\XX$. Then, for any $f\in\XX$,
\[
\norm{f}\le \frac{1}{C}\sup\enbrace{ \abs{f^*(f)} \colon f\in \enbrak{\XB^*}, \, \norm{f^*}\le 1}.
\]
\end{proposition}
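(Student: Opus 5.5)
The plan is to turn the inequality into a distance estimate in the bidual and then prove that estimate with a Goldstine approximation combined with the quasi-greedy property. I will aim for $\norm{f}\le K\sup\{\abs{f^*(f)}\colon f^*\in\enbrak{\XB^*},\ \norm{f^*}\le1\}$ with a constant $K$ depending only on $C$, presumably $K=C$. The displayed factor $1/C$ cannot be the right one: for $C>1$ and $\XB$ a $1$-quasi-greedy (e.g.\ orthonormal) basis, which is also $C$-quasi-greedy, it would force $\norm{f}\le\norm{f}/C$. I will therefore read it as a constant depending only on $C$.

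\emph{Step 1: reduction to the bidual.} Put $Y=\enbrak{\XB^*}$ and let $\hat f\in\XX^{**}$ be the canonical image of $f$. By Hahn--Banach duality for the quotient $\XX^{**}/Y^\perp\cong Y^*$,
\[
\sup\{\abs{g^*(f)}\colon g^*\in B_Y\}=\operatorname{dist}(\hat f,Y^\perp),
\qquad Y^\perp=\{x^{**}\colon x^{**}(\xx_n^*)=0\ \forall n\}.
\]
So it suffices to show $\norm{\hat f+x^{**}}\ge \norm{f}/K$ for every $x^{**}\in Y^\perp$. By Goldstine's theorem, choose a net $(h_i)$ in $\XX$ with $\norm{h_i}\le\norm{\hat f+x^{**}}+\epsilon$ and $h_i\to \hat f+x^{**}$ weak$^*$. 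Testing against each $\xx_n^*$ gives the key information: $\xx_n^*(h_i)\to\xx_n^*(f)$ for every $n$. Thus the $h_i$ have norm close to $\operatorname{dist}(\hat f,Y^\perp)$ and their coefficients converge coordinatewise to those of $f$.

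\emph{Step 2: choosing a good greedy set of $f$.} Greedy sums of a quasi-greedy basis converge to $f$, so I pick a greedy set $A$ of $f$ with $\norm{f-S_A(f)}\le\epsilon$. I also arrange a strict gap: $\min_{n\in A}\abs{\xx_n^*(f)}>t>\sup_{k\notin A}\abs{\xx_k^*(f)}$ for some $t>0$. This is possible by perturbing the choice of $A$ among the distinct coefficient levels of $f$, using $\Ft(f)\in c_0(\Nt)$. Since $A$ is finite and the coefficients converge coordinatewise, for large $i$ every $n\in A$ satisfies $\abs{\xx_n^*(h_i)}>t$, and $\norm{S_A(h_i)-S_A(f)}\to0$.

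\emph{Step 3 (main obstacle).} I would like to bound $\norm{S_A(h_i)}$ by $C\norm{h_i}$, but $A$ need not be a greedy set of $h_i$: indices outside $A$, infinitely many in total, may carry coefficients of $h_i$ above $t$. The natural greedy set is $B_i=\{n\colon\abs{\xx_n^*(h_i)}>t\}\supseteq A$. Quasi-greediness gives $\norm{S_{B_i}(h_i)}\le C\norm{h_i}$, so the task is to control the \emph{extra} piece $S_{B_i\setminus A}(h_i)$.

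\begin{itemize}
\item \emph{First attempt.} Choose $A$ (hence $t$) so that the extra piece is forced to be small. Lower democracy and the Lorentz-type bound \eqref{eq:TCG} applied to $h_i$ give $\abs{B_i}\le N$, with $N$ depending only on $t$ and $\norm{f}$.
\item \emph{Locating $B_i\setminus A$.} Because the coefficients converge coordinatewise, for any fixed finite set $F\supseteq A$ the set $B_i\setminus A$ eventually avoids $F$. Its elements therefore carry coefficients of $h_i$ of size about $t$ on ``far'' coordinates. Their contribution should be comparable to that of $h_i-f$, which is weak$^*$-null on $Y$.
\item \emph{Fallback.} If this is too crude, I would replace $S_A$ by the truncation (thresholding) operator at level $t$. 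For quasi-greedy bases it is bounded by a multiple of $C$, and it is insensitive to exactly where the large coefficients sit.
\end{itemize}

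\emph{Step 4: conclusion.} Once Step 3 yields $\norm{S_A(f)}\le K\norm{h_i}$ up to $o(1)$, letting $i\to\infty$ and then $\epsilon\to0$ gives
\[
\norm{f}\le \norm{S_A(f)}+\epsilon\le K\bigl(\operatorname{dist}(\hat f,Y^\perp)+\epsilon\bigr)+o(1),
\]
and hence $\norm{f}\le K\operatorname{dist}(\hat f,Y^\perp)$, which by Step 1 is the claim.
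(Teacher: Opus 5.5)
You are right that the printed factor $1/C$ is a misprint. The intended inequality is $\norm{f}\le C\sup\{\abs{f^*(f)}\colon f^*\in\enbrak{\XB^*},\ \norm{f^*}\le 1\}$; the paper only quotes the result and uses it just to see that its new norms are equivalent. Your Steps~1, 2 and~4 are sound. They produce a net $(h_i)$ in $\XX$ converging to $f$ coordinatewise, with $\norm{h_i}\le\lambda$ where $\lambda$ exceeds the supremum by at most $\epsilon$. They also produce a greedy set $A$ of $f$ with $a:=\min_{n\in A}\abs{\xx_n^*(f)}>t>s:=\sup_{n\notin A}\abs{\xx_n^*(f)}$.

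However, Step~3 is the entire content of the proposition, and you leave it open; none of your three bullets closes it.
\begin{itemize}
\item The bound $\abs{B_i}\le N$ is false in general, since $\lsdf$ may be bounded. For the unit vector basis of $c_0$, a net of norm at most $\lambda$ converging coordinatewise to $f$ can have arbitrarily many coordinates above $t$ far out. Even when the bound holds, $N$ depends on $t$, hence on $f$ and $\epsilon$, so a term-by-term estimate of $S_{B_i\setminus A}(h_i)$ gives no uniform constant.
\item The claim that the far piece is ``comparable to $h_i-f$'' yields no estimate. Here $h_i-f\to 0$ only in $\sigma(\XX,\enbrak{\XB^*})$, and turning that kind of convergence into a norm bound is exactly what has to be proved.
\item The truncation operator does not isolate $A$ either. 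Applied to $h_i$ it still acts on all of $B_i$, far coordinates included, and you are back where you started.
\end{itemize}

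The missing idea is to dilute the escaping large coefficients by averaging. Fix $0<\delta<\min\{a-t,\,t-s\}$. Choose indices $i_1,\dots,i_m$ recursively so that $h_{i_k}$ is within $\delta$ of $f$ at every coordinate of the finite set $A\cup B_1\cup\dots\cup B_{k-1}$, where $B_k=\{n\colon\abs{\xx_n^*(h_{i_k})}>t\}$. Then the sets $B_k\setminus A$ are pairwise disjoint, and $\abs{\xx_n^*(h_{i_l})}\le t$ whenever $n\in B_k\setminus A$ and $l\ne k$.

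Consequently, with $M=\sup_j\norm{\xx_j^*}$, the average $\bar h=\frac{1}{m}\sum_{k=1}^m h_{i_k}$ satisfies:
\begin{itemize}
\item $\norm{\bar h}\le\lambda$;
\item $\abs{\xx_n^*(\bar h)}\le t+\lambda M/m$ for $n\notin A$;
\item $\abs{\xx_n^*(\bar h)}\ge a-\delta$ for $n\in A$.
\end{itemize}
For $m$ large this gives $A\in\Gt(\bar h)$, so $\norm{S_A(\bar h)}\le C\lambda$. Meanwhile $\norm{S_A(\bar h)-S_A(f)}\le\delta\sum_{n\in A}\norm{\xx_n}$. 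Letting $\delta\to0$ and then $\epsilon\to0$ gives $\norm{f}\le C\sup\{\abs{f^*(f)}\colon f^*\in\enbrak{\XB^*},\ \norm{f^*}\le 1\}$, so indeed $K=C$. Without this device, or an equivalent one, your argument does not establish the statement.
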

\section{A unified renorming scheme for greedy-type bases}\label{sect:construction}\noindent
The purpose of this section is to develop the analytic machinery that underlies all renorming results proved later in the paper. Although our main theorems concern greedy and almost greedy bases, the construction is carried out in a more general framework that includes quasi-greedy and bidemocratic systems as well. The central challenge is to build an equivalent norm that simultaneously controls the contributions of the large coefficients of a vector, its small coefficients, and the action of the dual basis, while preserving the fundamental function up to a prescribed regularization. To achieve this balance, we combine Marcinkiewicz-type expressions, duality estimates arising from quasi-greediness, and a careful use of Dini-regularizations of fundamental functions.

The norms introduced here are designed so that the symmetry and regularity properties required for isometric forms of greediness are encoded directly into their structure. Once these renormings are in place, the proofs of the main results become largely transparent, relying only on the structural consequences established in this section.

We say that $\prim$ is \emph{Dini-regular} if its \emph{discrete derivative} is equivalent to $1/\prim^*$, that is there is a constant $C$ such that
\[
\frac{1}{C}\frac{\prim(m)}{m} \le \prim(m)-\prim(m-1) \le C \frac{\prim(m)}{m}, \quad m\in\NN.
\]
Here, and throughout the paper, we use the convention that sequences of positive numbers are extended to the set of nonnegative integers by $0\mapsto 0$.

While lower regularity and upper regularity pass to equivalent sequences, Dini regularity does not. So, given sequences $\prim$ and $\primt$ of of positive numbers, we say that $\prim$ is a \emph{Dini regularization} of $\primt$ if it is Dini-regular and equivalent to $\primt$.

\begin{lemma}\label{lem:LRPDini}
Let $\primt$ be a sequence of positive numbers. Assume that $\primt$ and $\primt^*$ are nondecreasing. Then $\primt$ has the LRP if and only if it admits a Dini regularization $\prim$. Besides, we can choose $\prim$ so that $\prim$ and $\prim^*$ are nondecreasing.
\end{lemma}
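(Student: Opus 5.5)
For the forward implication, let $\prim$ be a Dini regularization of $\primt$, with constant $C$ in the Dini condition and $\prim$ $K$-equivalent to $\primt$. I first show that $\prim$ itself has the LRP. The Dini condition makes the discrete derivative of $\prim$ positive, so $\prim$ is increasing. Hence for $n>m$,
\[
\prim(n)-\prim(n-1)\ge \frac{\prim(n)}{Cn}\ge \frac{\prim(m)}{Cn}.
\]
Summing over $m<n\le rm$ gives
\[
\prim(rm)-\prim(m)\ge \frac{\prim(m)}{C}\sum_{n=m+1}^{rm}\frac1n .
\]
The harmonic sum is at least $\int_{m+1}^{rm+1}dx/x=\log\frac{rm+1}{m+1}$. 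This quantity is at least $\log(r/2)$ uniformly in $m\ge1$. Choosing $r$ with $\log(r/2)\ge C$ yields $\prim(rm)\ge 2\prim(m)$.

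It remains to transfer the LRP to $\primt$, since the LRP passes to equivalent sequences by iteration. Indeed, $\primt(r^k m)\ge K^{-1}\prim(r^km)\ge K^{-1}2^k\prim(m)\ge K^{-2}2^k\primt(m)$. Taking $k$ with $2^k\ge 2K^2$ gives the LRP for $\primt$ with $r^k$ in place of $r$.

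For the converse, assume $\primt$ has the LRP with constant $r$. The natural candidate is the "integral" regularization
\[
\prim(m)=\sum_{n=1}^m \frac{\primt(n)}{n},\quad m\in\NN.
\]
Its discrete derivative is exactly $\primt(m)/m$. So once I show $\prim\approx\primt$, the Dini condition follows at once, and $\prim$ is a Dini regularization.

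The lower bound uses that $\primt^*$ is nondecreasing, i.e.\ $\primt(n)/n$ is nonincreasing. Each term with $n\le m$ is then at least $\primt(m)/m$, so $\prim(m)\ge m\cdot\primt(m)/m=\primt(m)$.

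The upper bound is where the LRP enters, and it is the step needing some care with integer parts. I split $\{1,\dots,m\}$ into the blocks
\[
I_k=\enbrace{n\in\NN\colon m/r^{k+1}<n\le m/r^k},\quad k\ge0.
\]
For $n\in I_k$, iterating the LRP gives $\primt(r^kn)\ge 2^k\primt(n)$. Since $r^kn\le m$ and $\primt$ is nondecreasing, this yields $\primt(n)\le 2^{-k}\primt(m)$. Moreover, $\sum_{n\in I_k}1/n\le 1+\log r$, because the block is an interval of integers whose endpoints have ratio $r$ (a harmonic-sum estimate with an extra $1$ for the first term). Therefore
\[
\prim(m)\le \sum_{k\ge0}(1+\log r)\,2^{-k}\primt(m)=2(1+\log r)\,\primt(m).
\]
This proves $\prim\approx\primt$.

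Finally, the monotonicity claims. Clearly $\prim$ is nondecreasing, since all its increments are positive. Also
\[
\frac{\prim(m)}{m}=\frac1m\sum_{n=1}^m\frac{\primt(n)}{n}
\]
is the average of the first $m$ terms of the nonincreasing sequence $(\primt(n)/n)_{n}$, and is therefore nonincreasing in $m$. Equivalently, $\prim^*$ is nondecreasing.

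The only genuinely delicate point is the block decomposition in the upper estimate. There one must make sure that $n\in I_k$ really satisfies $r^kn\le m$, so that monotonicity of $\primt$ applies. One must also check that the harmonic sums over the blocks are uniformly bounded, including the short blocks with few integers. Both are routine once the blocks are defined as above.
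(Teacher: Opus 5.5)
Your proof is correct, and for LRP $\Rightarrow$ Dini regularization it uses the same construction as the paper, $\prim(m)=\sum_{n\le m}\primt(n)/n$. The routine checks also match: $\primt\le\prim$, the Dini condition via $\prim(m)-\prim(m-1)=\primt(m)/m$, and the monotonicity of $\prim$ and $\prim^*$.

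The difference is in where the real work is done. The paper does not prove the upper estimate $\prim\lesssim\primt$; it quotes \cite{AlbiacAnsorena2016}*{Lemma 2.12}, which (for sequences like $\primt$) links the LRP to the summation estimate $\sum_{n\le m}\primt(n)/n\lesssim\primt(m)$. It uses the same lemma for the converse, after checking that
$\sum_{n\le m}\primt(n)/n\approx\sum_{n\le m}\bigl(\prim(n)-\prim(n-1)\bigr)=\prim(m)\approx\primt(m)$.

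You instead prove the upper estimate directly with the $r$-adic blocks $I_k$. For the converse you bypass the summation criterion entirely: you get the LRP of $\prim$ from the lower Dini inequality plus a harmonic-sum bound, then transfer it to $\primt$ by iterating the equivalence constants. Your route is self-contained, and your converse uses only the lower half of the Dini condition and no monotonicity of $\primt$ or $\primt^*$. The paper's route is shorter because it relies on the external lemma.

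Two cosmetic points. What you call the forward implication is the reverse of the order in the statement. Also, the claim that the blocks $I_k$ cover $\{1,\dots,m\}$ needs $r\ge 2$. This holds automatically, since the LRP with $r=1$ would force $\primt(m)\ge 2\primt(m)$ for a positive sequence.
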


\begin{proof}
Assume that $\primt$ has the LRP. Define
\[
\prim(m)=\sum_{n=1}^m \frac{\primt(n)}{n}, \quad m\in\NN.
\]
It is clear that $\prim$ and $\prim^*$ are nondecreasing, and that $\primt\le \prim$. By \cite{AlbiacAnsorena2016}*{Lemma 2.12}, there is a constant $C$ such that $\prim \le C \primt$ for all $m\in\NN$. Finally, for all $m\in\NN$,
\[
\prim^*(m) \enpar{\prim(m)-\prim(m-1)}=\frac{\primt(m)}{\prim(m)} \in \enbrak{\frac{1}{C} ,1}.
\]

Assume that $\prim$ is a Dini regularization of $\primt$. Then,
\[
\sum_{n=1}^m \frac{\primt(n)}{n}\approx \sum_{n=1}^m \frac{\prim(n)}{n} \approx \sum_{n=1}^m \prim(n)-\prim(n-1)=\prim(m) \approx \primt(m)
\]
for $m\in\NN$. By \cite{AlbiacAnsorena2016}*{Lemma 2.12}, $\primt$ has the LRP.
\end{proof}

Given a complete minimal system $\XB=(\xx_n)_{n\in\Nt}$ of a Banach space $\XX$ with dual basis $(\xx_n^*)_{n\in\Nt}$ we define, for $f\in\XX$, $f^*\in\XX^*$ and $A\in\Pt_{<\infty}(\Nt)$,
\begin{align*}
\SNA[\XB,\XX](f,A)&=\sum_{n\in A} \abs{\xx_n^*(f)}, \\
\SNC[\XB,\XX](f,f^*,A)&=S_A(f^*)(f)=\sum_{n\in A} f^*(\xx_n) \xx_n^*(f), \\
\SNB[\XB,\XX](f,f^*,A)&=\enpar{f^*-S_A(f^*)}(f)=f^*(f)-\SNC(f,f^*,A).
\end{align*}
If the basis and the space are clear from context, we will simply put $\SNA(f,A)$, $\SNC(f,B)$ and $\SNB(f,C)$. Loosely speaking,
\[
\SNB(f,f^*,A)=\sum_{n\in \Nt\setminus A}f^*(\xx_n) \xx_n^*(f).
\]
As a matter of fact, if $\XB$ is quasi-greedy, then this series converges when arranged according to size of $\abs{\xx_n^*(f)}$ (see \cite{Woj2000}*{Theorem 1} and \cite{AABW2021}*{Theorem 4.1}).

We write down two elementary results that we will use several times.
\begin{lemma}\label{lem:Marcin}
Let $\XB=(\xx_n)_{n\in\Nt}$ be a complete minimal system of a Banach space $\XX$. Then for any $f\in\XX$ and $A\in\Pt_{<\infty}(\Nt)$,
\[
\SNA(f,A)\le \usdf[\XB^*,\XX^*] \enpar{\abs{A}} \norm{f}.
\]
\end{lemma}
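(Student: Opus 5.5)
The inequality follows from a single duality step: we test $f$ against a signed indicator of the dual system over $A$. Fix $f\in\XX$ and $A\in\Pt_{<\infty}(\Nt)$. For each $n\in A$ choose $\delta_n\in\EE$ with $\delta_n\,\xx_n^*(f)=\abs{\xx_n^*(f)}$; if $\xx_n^*(f)=0$, any unimodular scalar will do. Put $\delta=(\delta_n)_{n\in A}\in\EE^A$ and consider the functional
\[
f^*=\Ind_{\delta,A}[\XB^*,\XX^*]=\sum_{n\in A}\delta_n\,\xx_n^*\in\XX^*.
\]

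Evaluating $f^*$ at $f$ gives
\[
f^*(f)=\sum_{n\in A}\delta_n\,\xx_n^*(f)=\sum_{n\in A}\abs{\xx_n^*(f)}=\SNA(f,A).
\]
On the other hand,
\[
\abs{f^*(f)}\le\norm{f^*}\norm{f}.
\]
By the definition of the fundamental function applied to the family $\XB^*$ in $\XX^*$, and since $\abs{A}\le\abs{A}$, we have
\[
\norm{f^*}\le\usdf[\XB^*,\XX^*]\enpar{\abs{A}}.
\]
Combining the three displays yields
\[
\SNA(f,A)\le\usdf[\XB^*,\XX^*]\enpar{\abs{A}}\norm{f}.
\]

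Two edge cases need a remark. If $A=\emptyset$, both sides vanish under the convention that sequences are extended by $0\mapsto 0$. The supremum defining $\usdf[\XB^*,\XX^*]$ may a priori be infinite, in which case the inequality holds trivially.

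There is no real obstacle here. The only point to watch is the choice of the signs $\delta_n$ in the complex case, and that is handled by allowing $\EE$ to be the full unit circle $S_\FF$, exactly as in the definition of $\usdf$.
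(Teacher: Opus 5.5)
Your proposal is correct and matches the paper's proof: the paper also evaluates the signed dual indicator $\Ind_{\overline{\varepsilon},A}[\XB^*,\XX^*]$ at $f$, with $\overline{\varepsilon}_n$ playing the role of your $\delta_n$. It then bounds its norm by $\usdf[\XB^*,\XX^*](\abs{A})$, exactly as you do.
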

\begin{proof}
If $\varepsilon=(\varepsilon_n)_{n\in A}$ in $\EE$ such that $\xx_n^*(f)=\varepsilon_n \abs{\xx_n^*(f)}$ for all $n\in A$,
\begin{align*}
\SNA(f,A)
&=\Ind_{\overline{\varepsilon},A}[\XB^*,\XX^*](f)\\
&\le \norm{\Ind_{\overline{\varepsilon},A}[\XB^*,\XX^*]} \norm{f}
\le \usdf[\XB^*,\XX^*](\abs{A}) \norm{f}.\qedhere
\end{align*}
\end{proof}

\begin{lemma}\label{lem:DualBound}
Let $\XB=(\xx_n)_{n\in\Nt}$ be a complete minimal system of a Banach space $\XX$ and $\prim\colon\NN\to(0,\infty)$ be a nondecreasing sequence. Assume that
\[
\SNA(f,A)\le \prim(\abs{A})\norm{f}, \quad f\in\XX, \, A\in\Pt_{<\infty}(\Nt).
\]
Then $\usdf[\XB^*,\XX^*]\le \prim$.
\end{lemma}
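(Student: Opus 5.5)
We have to bound $\norm{\Ind_{\delta,B}[\XB^*,\XX^*]}$ for a finite set $B\subset\Nt$ with $\abs{B}\le m$ and signs $\delta\in\EE^B$. Write $g^*=\Ind_{\delta,B}[\XB^*,\XX^*]=\sum_{n\in B}\delta_n\,\xx_n^*$. Its norm is the supremum of $\abs{g^*(f)}$ over $f\in B_\XX$, so the plan is to test $g^*$ against an arbitrary $f\in\XX$ and feed the result into the hypothesis.

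For $f\in\XX$ the triangle inequality gives
\[
\abs{g^*(f)}=\abs{\sum_{n\in B}\delta_n\,\xx_n^*(f)}\le \sum_{n\in B}\abs{\xx_n^*(f)}=\SNA(f,B).
\]
The hypothesis, applied with $A=B$, bounds the right-hand side by $\prim(\abs{B})\norm{f}$. Since $\prim$ is nondecreasing and $\abs{B}\le m$, this is at most $\prim(m)\norm{f}$.

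Taking the supremum over $f\in B_\XX$ yields $\norm{\Ind_{\delta,B}[\XB^*,\XX^*]}\le\prim(m)$. Taking the supremum over all $B\in\Pt_{\le m}(\Nt)$ and all $\delta\in\EE^B$ then gives $\usdf[\XB^*,\XX^*](m)\le\prim(m)$ for every $m\in\NN$. The empty set contributes norm $0$, so it is harmless.

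There is no real obstacle. The only point to watch is that monotonicity of $\prim$ is exactly what lets sets of cardinality strictly less than $m$ be handled, since the definition of $\usdf$ takes the supremum over $\Pt_{\le m}(\Nt)$ rather than over sets of cardinality exactly $m$.
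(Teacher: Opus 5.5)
Your proof is correct and is essentially the paper's argument: you test $\Ind_{\delta,B}[\XB^*,\XX^*]$ against $f\in B_\XX$, bound it by $\SNA(f,B)$ via the triangle inequality, apply the hypothesis, and use the monotonicity of $\prim$ to pass from $\abs{B}$ to $m$. Your write-up is slightly more complete than the paper's displayed chain, which is missing its left-hand side, and you also dispose of the trivial case $B=\emptyset$.
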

\begin{proof}
Given $m\in\NN$, $A\in\Pt_{\le m}(\Nt)$ and $\varepsilon\in\EE^A$,
\[
=\sup\enbrace{\abs{\Ind_{\varepsilon,A}[\XB^*,\XX^*](f)} \colon f\in B_\XX} \le \SNA(f,A) \le \prim\enpar{\abs{A}}\le \prim(m).\qedhere
\]
\end{proof}

\begin{theorem}\label{thm:main}
Let $\XB=(\xx_n)_{n\in\Nt}$ be a quasi-greedy bidemocratic basis of a Banach $\XX$. If $\usdf[\XB,\XX]$ has the LRP, then $\XX$ can be renormed so that $\XB$ isometrically bidemocratic and has Property~(A). Besides, there is a Dini regularization $\prim$ of $\usdf[\XB,\XX]$ such that $\prim$ and $\prim^*$ are nondecreasing. Given such a sequence $\prim$, we can choose the renorming so that the fundamental function of $\XB$ relative to it is $\prim$.
\end{theorem}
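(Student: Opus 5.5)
The plan is to use the LRP and Lemma~\ref{lem:LRPDini} to fix a Dini-regular $\prim$, equivalent to $\usdf[\XB,\XX]$, with $\prim$ and $\prim^*$ nondecreasing. I will then build an explicit norm $N$ of Marcinkiewicz type, combined with a term controlled by the dual system $\XB^*$. The goal is to read off $\usdf[\XB,(\XX,N)]=\prim$, $\usdf[\XB^*,(\XX,N)^*]\le\prim^*$ and Property~(A) directly from the formula.

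Put $d(k)=\prim(k)-\prim(k-1)$; by Dini regularity $d(k)\approx \prim(k)/k=1/\prim^*(k)$. For $f\in\XX$ and a finite set $A$, let $(a_k^A)_{k=1}^{\abs{A}}$ be the nonincreasing rearrangement of $(\abs{\xx_n^*(f)})_{n\in A}$. The candidate norm is
\[
N(f)=\sup\enbrace{\sum_{k=1}^{\abs{A}} d(k)\, a_k^A + c\,\abs{\SNB(f,f^*,A)} \colon A\in\Pt_{<\infty}(\Nt),\ f^*\in\enbrak{\XB^*},\ \norm{f^*}\le 1},
\]
where $c>0$ is small. If needed, I would restrict $A$ to greedy sets of $f$, and require $\abs{f^*(\xx_n)}$ to be small for $n\notin A$. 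The first term is symmetric under permutations and sign changes of the coordinates inside $A$. The second term involves $f$ only through the coordinates outside $A$. Hence changing signs of, or permuting, the largest coefficients (those of a greedy set) together with coordinates of modulus equal to them leaves $N$ unchanged. That is Property~(A) (equivalently, isometric almost greediness by Theorem~\ref{thm:1AG}).

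Equivalence with $\norm{\cdot}$ comes from three estimates. Taking $A=\emptyset$ and using Proposition~\ref{prop:reflex} gives $N(f)\gtrsim\norm{f}$. For the reverse inequality, Abel summation and Lemma~\ref{lem:Marcin} give
\[
\sum_{k} d(k)\, a_k^A\le \sup_B \frac{\SNA(f,B)}{\prim^*(\abs{B})}\lesssim \norm{f},
\]
using bidemocracy, $\usdf[\XB^*,\XX^*]\lesssim \prim^*$. Finally, $\abs{\SNB(f,f^*,A)}=\abs{(f^*-S_A f^*)(f)}$ is $\lesssim\norm{f}$ because $\XB^*$ is quasi-greedy (Theorem~\ref{thm:DKKTDual}). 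This step needs care when $A$ is not a greedy set for $f^*$; I would handle it by splitting $A$ according to the sizes of the coefficients and using \eqref{eq:TCG} together with the Lorentz embedding.

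Isometric bidemocracy would follow from Lemma~\ref{lem:DualBound}. The first term, with $A$ any set, gives $\SNA(f,A)\le \prim^*(\abs{A})\,N(f)$ by comparing with $A$'s own rearrangement, since $\sum_{k\le m} d(k)=\prim(m)=m/\prim^*(m)$ and $d$ is essentially decreasing. This yields $\usdf[\XB^*]\le\prim^*$, while the indicator computations should give $\usdf[\XB]=\prim$, so the product is at most $m$.

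The main obstacle is the exact identity $N(\Ind_{\varepsilon,B})=\prim(\abs{B})$. The first term contributes $\prim(\abs{A\cap B})$, and the dual term adds $c\,\abs{f^*(\Ind_{\varepsilon,B\setminus A})}$, which a priori is only $\lesssim \prim(\abs{B\setminus A})$. Meanwhile the gap $\prim(m)-\prim(m-j)$ is only guaranteed to be $\gtrsim j\,\prim(m)/m$. My first attempt would be to restrict the dual term to functionals $f^*$ (or truncations) whose coefficients satisfy $\abs{f^*(\xx_n)}\le c\, d(\abs{A})$ off $A$, so that the dual term is at most $c\,j\,d(m)\le \prim(m)-\prim(m-j)$ thanks to the Dini estimate. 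I would then have to recheck that the lower bound $N\gtrsim\norm{\cdot}$ survives this restriction, again via Proposition~\ref{prop:reflex} and \eqref{eq:TCG}.
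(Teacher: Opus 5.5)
There is a genuine gap at the very first step: the functional $N$ you define is not equivalent to $\norm{\cdot}$. For every finite $A$ you have $a_k^A\le \Rt(f)(k)$, with equality when $A$ is a greedy set of $f$. So your first term satisfies
\[
\sup_{A}\sum_{k=1}^{\abs{A}} d(k)\, a_k^A=\sum_{k=1}^{\infty} d(k)\, \Rt(f)(k),
\]
which is a Lorentz-type (weighted $\ell_1$) quantity, not a Marcinkiewicz (weak-type) one.

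Abel summation runs the other way. Write $\sum_{k\le m} d(k)a_k=\sum_{j\le m}\prim(j)(a_j-a_{j+1})$ with $a_{m+1}=0$, and use that $\prim(j)/j$ is nonincreasing. This yields $\sum_{k\le m} d(k)a_k^A\ge \SNA(f,A)/\prim^*(m)$, which is what actually makes your bound $\SNA(f,A)\le\prim^*(\abs{A})N(f)$ hold with constant $1$. The reverse inequality, which you need for equivalence, is false. Take the unit vector basis of $\ell_2$ with the admissible choice $\prim(m)=\sqrt{m}$, and $f=\sum_{n\le N} n^{-1/2}\xx_n$. Then $\norm{f}\approx\sqrt{\log N}$, while $d(k)\ge 1/(2\sqrt{k})$ gives $N(f)\ge \frac12\sum_{k\le N} 1/k\approx\frac12\log N$. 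The same failure occurs for $\Ht_p$ in $L_p$ (test on disjointly supported Haar functions), and it does not depend on which Dini regularization you pick, since always $d(k)\approx\prim(k)/k$. So the construction collapses before Property~(A) is even at stake.

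The paper instead uses the Marcinkiewicz-type term $\SNA(f,A)/\prim^*(\abs{A})$, whose supremum over $A$ is $\lesssim\norm{f}$ by Lemma~\ref{lem:Marcin} and bidemocracy. The price is that this term is not monotone in $A$. The heart of the proof is Claim~\ref{clm:A}: if $D\in\Gt(f)$ and $\abs{\xx_n^*(f)}=1$ on $D$, the supremum may be restricted to sets $A\supseteq D$. Only then does Property~(A) follow. Your remark that the dual term ``involves $f$ only through the coordinates outside $A$'' says nothing about sets $A$ that miss some of the largest coordinates; there the dual term does see where those coordinates sit.

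To make this work, the paper's dual term is $\abs{\SNB(f,f^*,A\cup B)}$ with $B\in\Gt(f^*)$ and $\abs{A}\le\abs{B}$. Tying the constraint to a greedy set of the functional, rather than of $f$, does several things:
\begin{enumerate}
\item It keeps each expression a seminorm in $f$. Restricting to $A\in\Gt(f)$ would jeopardize the triangle inequality.
\item It makes the term bounded, via quasi-greediness of $\XB^*$ and \eqref{eq:LorentzE}. For arbitrary $A$ the term is genuinely unbounded when $\XB$ is conditional, since $\SNB(f,f^*,A)=f^*(f-S_A(f))$.
\item It keeps $\abs{f^*(f)}$ in the supremum at $A=B=\emptyset$, so the lower bound from Proposition~\ref{prop:reflex} survives. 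Your proposed restriction on the coefficients of $f^*$ off $A$ would degenerate exactly in this case.
\item Through \eqref{eq:LorentzE}, it forces $\abs{f^*(\xx_n)}\lesssim 1/\prim^*(m+1)$ for $n\notin B$.
\end{enumerate}
Now add some $j\in D\setminus A$ to $A$ and one element to $B$. This gains at least $\prim(m+1)-\prim(m)$ in the first term and loses at most a small multiple of $1/\prim^*(m+1)$ in the dual term, and Dini regularity closes the gap. This is the compensation mechanism of your last paragraph, but it must be run inside the supremum for general $f$, not only when computing $N(\Ind_{\varepsilon,B})$.
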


\begin{proof}
Let $\XB^*=(\xx_n^*)_{n\in\Nt}$ be the dual system of $\XB$, and put $\YY=[\XB^*]$.
Let $C_d$ be such that $\XB$ is $C_d$-bidemocractic. By Theorem~\ref{thm:DKKTDual}, there is $C_q$ in $[1,\infty)$ such that $\XB^*$ is a suppression $C_q$-quasi-greedy basis of $\YY$. By Lemma~\ref{lem:DNI} and Lemma~\ref{lem:LRPDini}, a sequence $\prim$ as in the statement exists. By \eqref{eq:SemiBi} and \eqref{eq:TCG}, there is a constant $C_e$ such that
\begin{equation}\label{eq:LorentzE}
\sup_{m\in\NN} \prim^*(m) \Rt[\XB^*,\YY](f^*)(m) \le C_e \norm{f^*}, \quad f^*\in\YY.
\end{equation}
Let $C_a$, and $C_r$ in $(0,\infty)$ be such that
\[
\prim(m) \le C_a\usdf[\XB,\XX](m), \quad \prim^*(m)\enpar{\prim(m)-\prim(m-1)}\ge \frac{1}{C_r}, \quad m\in\NN.
\]
We have $\abs{\SNB(f,f^*,A)} \le C_q \norm{f^*} \norm{f}$ and, by Lemma~\ref{lem:Marcin},
\[
\frac{\SNA(f,A)}{\prim^*\enpar{\abs{A}}} \le C_a C_d \norm{f},
\]
with the convention that $0/0=0$.

Consequently, taking into account Proposition~\ref{prop:reflex}, if we set
\[
\Omega=\enbrace{(A,B,f^*) \in \enpar{\Pt(\Nt)}^2\times B_\YY \colon B\in\Gt(f^*),\, \abs{A} \le \abs{B} },
\]
and we define $\Nnorm{\cdot}\colon \XX\to[0,\infty]$ by
\[
\Nnorm{f}=
\sup\enbrace{ \frac{ \SNA(f,A)}{\prim^*\enpar{\abs{A}}} + \frac{\abs{\SNB(f,f^*, A\cup B)}}{2C_e C_r} \colon (A,B,f^*)\in\Omega},
\]
then $\Nnorm{\cdot}$ is an equivalent norm for $\XX$. We will infer the properties of this renorming from the following assertion.
\begin{claim}\label{clm:A}
If $f\in\XX$ and $D\in\Gt(f)$ are such that $\abs{\xx_n^*(f)}=1$ for all $n\in D$, and we put $T=\supp(f)$, then
\[
\Nnorm{f}=
\sup\enbrace{ \frac{\SNA(f,A)}{\prim^*\enpar{\abs{A}}} +\frac{\abs{\SNB(f,f^*, A\cup B)}}{2C_e C_r}\colon (A,B,f^*)\in\Omega, D \subset A\subset T}.
\]
\end{claim}

Indeed, Claim~\ref{clm:A} implies that, given $A\in\Pt_{<\infty}(\Nt)$, $\varepsilon\in\EE^A$, and $g\in\XX$ with $S:=\supp(g)\subset\Nt\setminus A$ and $\abs{\xx_n^*(g)}\le 1$ for all $n\in\Nt$,
\begin{align*}
&\Nnorm{\Ind_{\varepsilon,A}[\XB,\XX]+g}=N(g,A)\\
&:=\sup\enbrace{ \frac{\abs{A}+\SNA(g,E)}{\prim^*\enpar{\abs{A}+\abs{E}}} +
\frac{\abs{\SNB(g,g^*,E \cup B)}}{2C_e C_r} \colon (E,B,g^*)\in\Upsilon(g,A)},
\end{align*}
where
\[
\Upsilon(g,A)=\enbrace{(E,B,g) \in\Pt(S) \times \Pt(\Nt) \times B_\YY \colon B\in\Gt(g^*), \, \abs{A}+\abs{E} \le \abs{B} }.
\]
For a fixed $g\in\XX$, $N(g,A)$ only depends on $A$ through $\abs{A}$. Hence, $\XB$ has Property~(A) relative to $\Nnorm{\cdot}$. Besides,
\[
\Nnorm{\Ind_{\varepsilon,A}[\XB,\XX]}=N(0,A)= \frac{\abs{A}}{\prim^*{\enpar{\abs{A}}}}=\prim\enpar{\abs{A}}.
\]
Since $\prim$ is nondecreasing, the fundamental function of $\XB$ relative to the renorming is $\prim$. In turn, that of $\XB^*$ is bounded by $\prim^*$ by Lemma~\ref{lem:DualBound}. Hence, $\XB$ is isometrically bidemocratic relative to $\Nnorm{\cdot}$.

We conclude by proving Claim~\ref{clm:A}. To that end, we pick $f^*\in S_\YY$, $B\in\Gt(f^*)$ and $A\subset\Nt$ with $\abs{A} \le \abs{B}$. If $D\not\subset A$, we pick $j\in D\setminus A$. Set $A'=A\cup \{j\}$. Choose $B'\in \Gt(f^*)$ with $B\subset B'$ and $\abs{B'}=\abs{B}+1$. Put $m=\abs{A}$, $t= \sum_{n\in A} \abs{\xx_n^*(f)}$ and $E= A'\cup B' \setminus(A\cup B)$. We have
\begin{align*}
&\rho:=\frac{\SNA(f,A)}{\prim^*\enpar{\abs{A}}} +\frac{\abs{\SNB(f,f^*, A\cup B)}}{2 C_e C_r}
-\frac{\SNA(f,A')}{\prim^*\enpar{\abs{A'}}} -\frac{\abs{\SNB(f,f^*, A'\cup B'})}{2 C_e C_r}\\
&\le\frac{1}{2C_e C_r} \sum_{n\in E} \abs{f^*(\xx_n)} \abs{\xx_n^*(f)}+ \enpar{\frac{1}{\prim^*(m)}- \frac{1}{\prim^*(m+1)}} t
-\frac{1}{\prim^*(m+1)}.
\end{align*}
Since $E\subset\Nt \setminus B$ and $\abs{B}\ge m$, by \eqref{eq:LorentzE},
\[
\abs{f^*(\xx_n)} \le C_e \frac{1}{\prim^*(m+1)}
\]
for all $n\in E$. Since $\abs{E} \le 2$ and $\abs{\xx_n^*(f)} \le 1$ for all $n\in\Nt$,
\begin{align*}
\rho
&\le \frac{1}{C_r} \frac{1}{\prim^*(m+1)} + \enpar{\frac{1}{\prim^*(m)}- \frac{1}{\prim^*(m+1)}} m -\frac{1}{\prim^*(m+1)} \\
&= \frac{1}{C_r} \frac{1}{\prim^*(m+1)} -\enpar{\prim(m+1)-\prim(m)}\le 0.
\end{align*}
By induction on $\abs{D\setminus A}$ we infer that the suppremum that defines $\Nnorm{f}$ is approached through triples $(A,B,f^*)$ with $D\subset A$. Since
\[
\SNA(f,A)=\SNA(f,A\cap T), \quad \SNB(f,f^*, A \cup B)=\SNB(f,f^*,(A\cap T)\cup B),
\]
this suppremum is approached through triples $(A,B,f^*)\in\Omega$ with $D \subset A \subset T$.
\end{proof}

We do not know whether the assumption that the fundamental function has the LRP can be dropped from the statement of Theorem~\ref{thm:main}. Notwithstanding, in this situation, we can still achieve quasi-isometric renormings. We will next prove this result, which is a quasi-greedy counterpart of \cite{DKOSZ2014}*{Theorem A}. We point that the method used by the authors of \cite{DKOSZ2014} to obtain quasi-isometric renormings relative to greediness for bidemocratic unconditional bases still yields quasi-isometric renormings relative to symmetry for largest coefficients when unconditionality is lifted. Since we do not have to care about the constants related to unconditionality, these renormings plainly yield quasi-isometric renormings relative to greediness. However, in the quasi-gredy case we also need to pay attention to the constants related to suppression quasi-greediness. As matter of fact, the renorming we will use significantly differs from the renormings we employed in the proofs of Theorem~\ref{thm:main} and \cite{DKOSZ2014}*{Theorem A}.

\begin{theorem}
Let $\XB=(\xx_n)_{n\in\Nt}$ be a bidemocratic quasi-greedy basis of a Banach space $\XX$. Then for any $\varepsilon>0$, $\XX$ can be renormed so that $\XB$ is isometrically bidemocratic and almost $(1+\varepsilon)$-greedy. Moreover, if $\prim\approx\usdf[\XB,\XX]$ is nondecreasing, and $\prim^*$ is also nondecreasing, then we can get that the fundamental function of $\XB$ relative to the renorming is $\prim$.
\end{theorem}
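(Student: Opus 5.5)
The plan is to drop the Dini-regularity mechanism of Theorem~\ref{thm:main}. Instead, the norm gets a Marcinkiewicz-type term and a \emph{small} perturbation term that is built so that it cannot increase under removal of greedy sets. Then we invoke the quantitative fact quoted in Section~\ref{Sec:Prelim}: if $\XB$ is suppression $C_1$-quasi-greedy and $C_2$-symmetric for largest coefficients, it is almost $C_1C_2$-greedy. We aim for $C_1=1$ and $C_2\le 1+\varepsilon$.

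\textbf{Setup and the new norm.} Let $\YY=[\XB^*]$. By Theorem~\ref{thm:DKKTDual}, $\XB^*$ is suppression $C_q$-quasi-greedy in $\YY$. Fix $\prim\approx\usdf[\XB,\XX]$ with $\prim,\prim^*$ nondecreasing; if none is prescribed, take $\prim=\usdf$, which works by Lemma~\ref{lem:DNI}. For $\eta>0$ small, define
\[
\Nnorm{f}=\max\enbrace{\sup_{A\in\Pt_{<\infty}(\Nt)}\frac{\SNA(f,A)}{\prim^*(\abs{A})},\ \eta\sup\enbrace{\abs{\enpar{f^*-S_B(f^*)}\enpar{f-S_D(f)}}\colon f^*\in B_\YY,\ B\in\Gt(f^*),\ D\in\Gt(f)}}.
\]
For the upper bound, the first term is at most $C\norm{f}$ by Lemma~\ref{lem:Marcin} together with bidemocracy and $\prim\approx\usdf$. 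The second term is at most $\eta\,C_q\,C'\norm{f}$, where $C'$ is the suppression quasi-greedy constant of $\XB$. For the lower bound, taking $B=D=\emptyset$ in the second term and applying Proposition~\ref{prop:reflex} shows that it dominates a multiple of $\norm{f}$. Hence $\Nnorm{\cdot}$ is an equivalent norm for every $\eta>0$.

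\textbf{Fundamental function and bidemocracy.} For $\Ind_{\varepsilon,A}$, the first term equals $\sup_{E}\abs{E\cap A}/\prim^*(\abs{E})$. Since $\prim^*$ is nondecreasing, this is attained at $E=A$ and equals $\prim(\abs{A})$. The second term is at most $\eta C_qC'\norm{\Ind_{\varepsilon,A}}\lesssim\eta\,\prim(\abs{A})$. So for $\eta$ small enough, $\Nnorm{\Ind_{\varepsilon,A}}=\prim(\abs{A})$, and because $\prim$ is nondecreasing the fundamental function is exactly $\prim$. Moreover, $\SNA(f,A)\le\prim^*(\abs{A})\Nnorm{f}$ by construction, so Lemma~\ref{lem:DualBound} gives $\usdf[\XB^*]\le\prim^*$. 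Together these give isometric bidemocracy.

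\textbf{Suppression quasi-greediness with constant $1$.} Let $A\in\Gt(f)$ and $h=f-S_A(f)$. The coefficients of $h$ are those of $f$ with some set to zero, so each $\SNA(h,E)\le\SNA(f,E)$ and the first term does not increase. For the second term, if $D'\in\Gt(h)$ then $A\cup D'\in\Gt(f)$, since the coefficients off $A$ are no larger than those on $A$. Also $h-S_{D'}(h)=f-S_{A\cup D'}(f)$, so every quantity appearing in the second term for $h$ already appears for $f$. Hence $\Nnorm{f-S_A f}\le\Nnorm{f}$. Ties need a little care, which is handled by choosing $D'$ to avoid $A$.

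\textbf{Symmetry for largest coefficients with constant $1+O(\eta)$; the main obstacle.} Let $f=\Ind_{\varepsilon,A}+g$ and $f'=\Ind_{\delta,B}+g$, with $\abs{A}\le\abs{B}$ and the other conditions as in \eqref{eq:SSL}.
\begin{itemize}[leftmargin=*]
\item The first term of $f$ is at most that of $f'$. Given $E$, replace $E\cap A$ by a subset of $B$ of the same size; this is possible because $\abs{A}\le\abs{B}$. All coefficients on $A$ and $B$ have modulus $1\ge\abs{\xx_n^*(g)}$, so the sum does not decrease while $\abs{E}$ is preserved.
\item The second term is where the only loss occurs, because greedy sets of $f$ and $f'$ differ. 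I expect this to be the step needing the most care. The plan is to note that for any admissible data, the expression for $f$ is at most $C_qC'\norm{\Ind_{\varepsilon,A}}$ plus a comparable expression for $g$. The latter is bounded by that of $f'$ plus $C_qC'\norm{\Ind_{\delta,B}}$. Here one has to match greedy sets: a greedy set of $f$ minus $A$, united with $B$, is greedy for $f'$ up to ties, and if this matching fails we crudely bound both sides. The total loss is then $\lesssim\eta\,\prim(\abs{B})$.
\item Finally, $\Nnorm{f'}\ge\SNA(f',B)/\prim^*(\abs{B})=\prim(\abs{B})$.
\end{itemize}
Combining these, $\Nnorm{f}\le(1+C\eta)\Nnorm{f'}$. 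Choosing $\eta$ with $C\eta\le\varepsilon$ completes the argument.
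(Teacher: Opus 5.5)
\textbf{There is a genuine gap: your $\Nnorm{\cdot}$ is not a norm.} You only check two-sided bounds against $\norm{\cdot}$. The second term, however, is a supremum over $D\in\Gt(f)$, a family that depends on $f$. So it is not a supremum of seminorms, and subadditivity fails for every $\eta>0$. Here is a counterexample.

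Let $\XX=(\RR^2,n)\oplus_1\ell_2$ with $n(a,b)=\frac12\abs{b-\frac{9}{10}a}+\frac{11}{20}\abs{a}$. Its natural basis $(e_1,e_2,u_1,u_2,\dots)$ is equivalent to the unit vector basis of $\ell_2$, hence bidemocratic and quasi-greedy. The dual norm is $n^*(y)=\max\enbrace{2\abs{y_2},\frac{2}{11}\abs{10y_1+9y_2}}$, so the dual basis is suppression $1$-quasi-greedy. Your second term therefore reduces to $\eta T(h)$ with $T(h)=\sup_{D\in\Gt(h)}\norm{h-S_D(h)}$.

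Put $R=\frac{1}{100}\sum_{k=1}^M k^{-1/2}u_k$, $f=e_1+\frac{9}{10}e_2+R$ and $g=\frac15 e_2+R$. Then $T(f)=\frac{11}{20}+\norm{R}_2$ and $T(g)=\frac{1}{10}+\norm{R}_2$. On the other hand, removing the single largest coefficient of $f+g=e_1+\frac{11}{10}e_2+2R$ gives $T(f+g)\ge n(e_1)+2\norm{R}_2=1+2\norm{R}_2$. The Marcinkiewicz terms of $f$, $g$ and $f+g$ stay bounded as $M\to\infty$, while $\norm{R}_2\to\infty$. Hence for large $M$ the second term dominates in all three, and $\Nnorm{f+g}>\Nnorm{f}+\Nnorm{g}$.

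This is not a technicality that can be fixed by adjusting constants. The feature that makes your suppression step trivial, namely building the removal of greedy sets of the argument into the norm, is exactly what destroys convexity.

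\textbf{How the paper avoids this.} Only greedy sets of the dual functional enter the norm: $\norm{f}_t=\sup\abs{\SNC(f,f^*,A\setminus B)}$ over $f^*\in B_\YY$ and $A,B\in\Gt(f^*)$ with $B\subset A$. For each fixed $(A,B,f^*)$ this is the modulus of a linear functional of $f$, so $\max\enbrace{\norm{f}_s,\delta\norm{f}_t}$ is a genuine norm, equivalent to $\norm{\cdot}$ by Proposition~\ref{prop:reflex}.

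The price is that suppression $1$-quasi-greediness is no longer automatic. So the paper does not factor through ``suppression $C_1$-quasi-greedy and $C_2$-symmetric for largest coefficients''. Instead it proves $\Nnorm{f-S_D(f)}\le(1+O(\delta))\Nnorm{f-S_E(f)}$ directly for $D\in\Gt(f)$, $E\cap D=\emptyset$, $\abs{E}\le\abs{D}=m$. It does this by cutting the pair $(A,B)$ at cardinality $m$ and absorbing the two resulting error terms with the Marcinkiewicz part and the Lorentz bound \eqref{eq:LorentzE} for $\XB^*$.

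Your computation of the fundamental function and of isometric bidemocracy is correct and matches the paper. One minor point: when you compare the Marcinkiewicz terms of $f$ and $f'$, the set $E$ may already meet $B$, so replacing $E\cap A$ by a subset of $B$ is not always possible. The clean justification is that this term depends only on the decreasing rearrangement of the coefficients, and that of $f$ is pointwise dominated by that of $f'$.
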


\begin{proof}
Let $C_a$, $C_d$, $C_q$, $C_e$, $\XB^*$ and $\YY$ be as in the proof of Theorem~\ref{thm:main}. Also, let $C_b\in(0,\infty)$ be such that $\usdf[\XB,\XX]\le C_b \prim$. Set
\[
\Delta=\enbrace{ (A,B,f^*)\in \enpar{\Pt(\Nt)}^2\times B_\YY \colon A,B\in\Gt(f^*), \, B\subset A}.
\]
Fix $\delta>0$. Define for $f\in\XX$
\begin{align*}
\norm{f}_s&=\sup\enbrace{ \frac{\SNA(f,A)}{\prim^*\enpar{\abs{A}}} \colon A\in\Pt_{<\infty}(\Nt)}, \\
\norm{f}_t&= \sup\enbrace{ \abs{\SNC(f,f^*,A\setminus B)} \colon (A,B,f^*)\in\Delta},\\
\Nnorm{f}&=\max\enbrace{ \norm{f}_s , \delta \norm{f}_t}.
\end{align*}
Note that $\norm{f}_s$ is a Marcinkiewicz norm evaluated at $\Ft(f)$. We have $\norm{f}_t\le 2C_q \norm{f}$ and, by Lemma~\ref{lem:Marcin}, $\norm{f}_s\le C_a C_d \norm{f}$. Hence, by Proposition~\ref{prop:reflex}, $\Nnorm{\cdot}$ is an equivalent norm for $\XX$.

Let $D\in\Gt(f)$ and $E\in\Pt(\Nt)$ be such that $E\cap D=\emptyset$ and $\abs{E} \le m:= \abs{D}$. The $1$-symmetry of the Marcinkiewicz norm yields
\begin{equation}\label{eq:SymMar}
\rho:=\norm{f-S_D(f)}_s \le \norm{f-S_E(f)}_s.
\end{equation}

Pick $(A,B,f^*)\in\Delta$. If $\abs{B}\ge m$ we set $A_0=B_0=\emptyset$, $A_1=A$ and $B_1=B$. If $\abs{A}\le m$, we set pick $B_1\in\Gt(f^*)$ with $\abs{B_1}=m$, and we set $A_0=A$, $B_0=B$ and $A_1=B_1$. Finally, if $ \abs{B} <m <\abs{A}$, we pick $A_1\in\Gt(f^*)$ with $B\subset A_1 \subset A$ and $\abs{A_1}=m$, and we set $A_0=B$, $B_0=A_1$ and $B_1=A$. In any case we have $(A_1,B_1,f^*)\in\Delta$, $\abs{B_1}\ge m$, $\abs{A_0}\le m$ and
\[
\SNC(f-S_D(f),f^*,A\setminus B)=\rho_0+\rho_1,
\]
where
\[
\rho_0=\SNC(f-S_D(f),f^*,B_0\setminus A_0), \quad \rho_1=\SNC(f-S_D(f),f^*,B_1\setminus A_1).
\]
We also have $\rho_1=\rho_2+\rho_3$, where
\begin{align*}
\rho_2=& \SNC(f-S_E(f),f^*,B_1\setminus A_1);\\
\rho_3=& \SNC(f,f^*,E\cap B_1\setminus A_1)-\SNC(f,f^*,D\cap B_1\setminus A_1)
\end{align*}
On the one hand, by Lemma~\ref{lem:Marcin},
\begin{align*}
\rho_0
&\le \sup_{n\in\Nt\setminus D} \abs{\xx_n^*(f)} \sum_{n\in A_0} \abs{f^*(\xx_n)} \\
&\le \min_{n\in D} \abs{\xx_n^*(f)} \sum_{n\in A_0} \abs{f^*(\xx_n)} \\
&= \min_{n\in D} \abs{\xx_n^*(f-S_E(f))} \sum_{n\in A_0} \abs{f^*(\xx_n)} \\
&\le \frac{1}{\abs{D}} \SNA(f-S_E(f),D) \sum_{n\in A_0} \abs{f^*(\xx_n)}\\
&\le \frac{\prim^*(m)}{m} \norm{f-S_E(f)}_s \sum_{n\in A_0} \abs{f^*(\xx_n)}\\
&\le \frac{\norm{f-S_E(f)}}{\prim(m)} \usdf[\XB,\XX]\enpar{\abs{A_0}} \le C_b\norm{f-S_E(f)}.
\end{align*}
On the other hand,
\begin{align*}
\rho_3&\le \SNA(f, D\cup E) \sup_{n\in\Nt \setminus B_1} \abs{f^*(\xx_n)}
\le \frac{C_e}{\prim^*(m+1)}\SNA(f, D\cup E) \\
&\le \frac{2 C_e}{\prim^*(m+1)} \SNA(f, D)
=\frac{2 C_e}{\prim^*(m+1)} \SNA(f-S_E(f), D)\\
&\le \frac{2 C_e}{\prim^*(m+1)} \prim^*(m) \norm{f-S_E(f)}_s
\le 2C_e \norm{f-S_E(f)}_s.
\end{align*}
Summing up,
\begin{equation}\label{eq:MiguelEst}
\abs{\rho_0+\rho_1-\rho_2}\le (2C_e+C_b)\norm{f-S_E(f)}.
\end{equation}

Set $D= (2C_e+C_b)\delta$. Combining \eqref{eq:SymMar} with \eqref{eq:MiguelEst} yields
\begin{align*}
\Nnorm{f-S_D(f)}
&\le \max\enbrace{ \norm{f-S_E(f)}_s, \delta \norm{f-S_E(f)}_t + D \norm{f-S_E(f)}_s}\\
&\le\enpar{1+D} \Nnorm{f-S_E(f)}.
\end{align*}

Pick $A\in\Pt_{<\infty}(\Nt)$ and $\varepsilon\in\EE^A$. We have
\[
\norm{\Ind_{\varepsilon,A}[\XB,\XX]}_t\le 2 C_b C_q \prim\enpar{\abs{A}}
\]
and, since $\prim^*$ is nondecreasing,
\[
\norm{\Ind_{\varepsilon,A}[\XB,\XX]}_s=\prim\enpar{\abs{A}}.
\]
Thus, if $D \delta \le 1$ we have $\Nnorm{\Ind_{\varepsilon,A}[\XB,\XX]}=\prim\enpar{\abs{A}}$, and if this is the case, the fundamental function of $\XB$ relative to $\Nnorm{\cdot}$ is $\prim$. By Lemma~\ref{lem:DualBound}, $\XB$ is isometrically bidemocratic relative to the renorming. Choosing $\delta$ small enough so that $D\le \min\{\varepsilon,1\}$, we are done.
\end{proof}
\section{Renorming spaces with greedy bases}\label{Sect:Renorm}\noindent
In this section we use the renorming scheme developed in the preceding section in the specific case of greedy bases. The results obtained here show that once the structural hypotheses are in place (most notably unconditionality, bidemocracy, and the regularity properties of the fundamental function) the abstract methods of Section~\ref{sect:construction} yield isometric versions of the classical greedy inequalities. In particular, the renormings constructed earlier can now be combined with standard renormings to ensure lattice $1$-unconditionality and exact control of the fundamental function. This allows us to recover and strengthen many known renorming theorems for greedy bases and, more importantly, to deduce new ones under mild assumptions on the ambient space. The main outcome of this section is an isometric greedy renorming result (Theorem~4.3), which serves as the foundational step for all subsequent applications, including the resolution of the Haar system renorming problem in Section~\ref{Sec:Haar}.

\begin{lemma}\label{lem:UncRen}
Let $\XB$ be a $C$-greedy basis of a Banach space $\XX$, $1\le C<\infty$. Then there is a renorming of $\XX$ so that $\XB$ is lattice $1$-unconditional and $C$-greedy relative to the renorming. If $\psi$ and $\psi^*$ are the fundamental functions of $\XB$ and $\XB^*$, respectively, relative to the renorming, then we can choose the renorming so that $\psi= \usdf[\XB,\XX]$ and $\psi^*\le \usdf[\XB^*,\XX^*]$.
\end{lemma}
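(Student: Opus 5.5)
The plan is to use the standard lattice renorming
\[
\Nnorm{f}=\sup\enbrace{\norm{M_\lambda(f)}\colon \lambda\in\EE^{\Nt}}\quad\text{or}\quad \Nnorm{f}=\sup\enbrace{\norm{M_\lambda(f)}\colon \norm{\lambda}_\infty\le 1},
\]
and check that it has every property claimed. A $C$-greedy basis is unconditional, so $\XB$ is lattice $K$-unconditional for some $K$. Hence $\norm{f}\le\Nnorm{f}\le K\norm{f}$, and $\Nnorm{\cdot}$ is an equivalent norm. Lattice $1$-unconditionality follows because $M_\mu\circ M_\lambda=M_{\mu\lambda}$ and $\norm{\mu\lambda}_\infty\le1$ whenever $\norm{\mu}_\infty\le1$ and $\norm{\lambda}_\infty\le 1$. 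The sup over $\norm{\lambda}_\infty\le1$ is approached by finitely supported $\lambda$; to make this rigorous I would take the sup over $\lambda\in c_{00}$, or use convexity to reduce to unimodular $\lambda$ with finitely many non-$1$ entries.

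\emph{Fundamental functions.} For $A$ finite and $\varepsilon\in\EE^A$, we have $M_\lambda(\Ind_{\varepsilon,A})=\sum_{n\in A}\lambda_n\varepsilon_n\xx_n$. By convexity in each coordinate, the sup over $\abs{\lambda_n}\le1$ is attained at unimodular values, which gives $\Ind_{\varepsilon',A}$ with $\varepsilon'\in\EE^A$. Therefore $\sup_{\varepsilon}\Nnorm{\Ind_{\varepsilon,A}}=\sup_\varepsilon\norm{\Ind_{\varepsilon,A}}$ over $\abs{A}\le m$, so $\psi=\usdf[\XB,\XX]$. For the dual, $\Nnorm{\cdot}\ge\norm{\cdot}$ implies that the dual norm of $\Nnorm{\cdot}$ is at most $\norm{\cdot}_{\XX^*}$, so $\psi^*\le\usdf[\XB^*,\XX^*]$.

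\emph{Greedy constant.} This is the main step. Fix $f$, $A\in\Gt(f)$, $B$ with $\abs{B}\le\abs{A}$, and scalars $(a_n)_{n\in B}$; put $g=f-\sum_{n\in B}a_n\xx_n$. For unimodular $\lambda$, $M_\lambda$ preserves the moduli of the coefficients, so $A\in\Gt(M_\lambda f)$. Also
\[
M_\lambda(f-S_A f)=M_\lambda f-S_A(M_\lambda f),\qquad M_\lambda g=M_\lambda f-\sum_{n\in B}\lambda_na_n\xx_n .
\]
The $C$-greediness of $\XB$ in the original norm then gives
\[
\norm{M_\lambda(f-S_Af)}\le C\norm{M_\lambda g}\le C\Nnorm{g}.
\]
Taking the sup over unimodular $\lambda$ yields $\Nnorm{f-S_Af}\le C\Nnorm{g}$.

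The only delicate point is ensuring the sup over the unit ball of $\ell_\infty$ equals the sup over unimodular $\lambda$, which is needed for lattice $1$-unconditionality. I would handle it by the convexity argument: $\lambda\mapsto\norm{M_\lambda f}$ is convex in each coordinate. With real scalars, every $\lambda$ with $\norm{\lambda}_\infty\le1$ and finitely many entries different from $1$ is a convex combination of sign vectors. With complex scalars, every such $\lambda$ is a convex combination of unimodular ones. A limiting argument using the unconditional convergence of expansions then handles general $\lambda$.
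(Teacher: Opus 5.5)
Your proposal is correct and uses exactly the renorming the paper uses, $f\mapsto\sup\{\norm{M_\lambda(f)}\colon \lambda\in B_{\ell_\infty}\}$; the paper's proof consists only of the assertion that this renorming ``does the job.'' Your verification supplies the omitted details. In particular, your reduction of the supremum to unimodular multipliers (by convexity plus a limiting argument) is exactly what is needed to transfer $C$-greediness, since only unimodular $M_\lambda$ preserve greedy sets.
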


\begin{proof}
The obvious renorming
\[
f\mapsto\sup\enbrace{\norm{M_\lambda(f)}\colon\lambda\in B_{\ell_\infty}}
\]
does the job.
\end{proof}

\begin{theorem}\label{thm:greedy}
Let $\XB=(\xx_n)_{n\in\Nt}$ be a unconditional bidemocratic basis of a Banach space $\XX$. If $\usdf[\XB,\XX]$ has the LRP, then $\XX$ can be renormed so that $\XB$ isometrically bidemocratic and isometrically greedy. Besides, there is a Dini regularization $\prim$ of $\usdf[\XB,\XX]$ such that $\prim$ and $\prim^*$ are nondecreasing. Given such a sequence $\prim$, we can choose the renorming so that the fundamental function of $\XB$ relative to it is $\prim$.
\end{theorem}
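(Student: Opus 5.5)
The plan is to combine Theorem~\ref{thm:main} with the lattice renorming of Lemma~\ref{lem:UncRen}, and then conclude via Theorem~\ref{thm:AW}. An unconditional basis is quasi-greedy, so Theorem~\ref{thm:main} applies directly. It gives the Dini regularization $\prim$ with $\prim$ and $\prim^*$ nondecreasing, together with a norm $\Nnorm{\cdot}$ under which $\XB$ has Property~(A), is isometrically bidemocratic, and has fundamental function $\prim$. What this renorming does not guarantee is suppression $1$-unconditionality. I will therefore first renorm so that $\XB$ is lattice $1$-unconditional, and then check that the construction in the proof of Theorem~\ref{thm:main} preserves this.

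\textbf{Step 1: lattice renorming.} Replace $\norm{\cdot}$ by $\sup_{\lambda\in B_{\ell_\infty}}\norm{M_\lambda(f)}$. This is the renorming of Lemma~\ref{lem:UncRen}, which only uses unconditionality. After it, $\XB$ is lattice $1$-unconditional. The fundamental functions change only up to constants, so bidemocracy and the LRP of $\usdf$ persist, and $\prim$ remains a Dini regularization of the new $\usdf$.

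\textbf{Step 2: apply the construction of Theorem~\ref{thm:main} to this norm.} The main point is that the resulting $\Nnorm{\cdot}$ is again lattice $1$-unconditional, or at least suppression $1$-unconditional. Since the dual of a lattice $1$-unconditional basis is lattice $1$-unconditional, $\YY=[\XB^*]$ is invariant under every $M_\mu$ with $\norm{M_\mu}\le1$, for $\mu\in B_{\ell_\infty}$. Now fix $\lambda\in B_{\ell_\infty}$ and a triple $(A,B,f^*)\in\Omega$, and consider the two terms defining $\Nnorm{M_\lambda f}$.
\begin{itemize}
\item The first term satisfies $\SNA(M_\lambda f,A)\le\SNA(f,A)$ trivially.
\item For the second term, $\SNB(M_\lambda f,f^*,A\cup B)=\SNB(f,M_\lambda f^*,A\cup B)$. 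Here $M_\lambda f^*\in B_\YY$, but $B$ need not be a greedy set of $M_\lambda f^*$, so the pair $(A,B)$ is not admissible as it stands.
\end{itemize}

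\textbf{Step 3: handling the greedy-set constraint.} This is the step I expect to be the main obstacle. One option is to reduce to $\lambda$ with unimodular entries, since by convexity these suffice for lattice norms in the complex or real case. Then $\abs{\Ft(M_\lambda f^*)}=\abs{\Ft(f^*)}$, so $\Gt(M_\lambda f^*)=\Gt(f^*)$ and every term is invariant. This yields invariance of $\Nnorm{\cdot}$ under unimodular multipliers, but not full suppression. To get suppression $1$-unconditionality, $\Nnorm{S_Fg}\le\Nnorm{g}$, I would instead replace $\Nnorm{\cdot}$ by
\[
\sup_{\lambda\in B_{\ell_\infty}}\Nnorm{M_\lambda(f)} .
\]
I then need to verify that this final lattice renorming does not destroy Property~(A) or the fundamental function. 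The fundamental function is unaffected: each $\Nnorm{M_\lambda\Ind_{\varepsilon,A}}$ is a convex combination of the values $\Nnorm{\Ind_{\varepsilon',A'}}$ with $A'\subset A$, and $\prim$ is nondecreasing.

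For Property~(A), I would use Claim~\ref{clm:A} in the form $N(g,A)$. Fix $\Ind_{\varepsilon,A}+g$ with $\abs{\xx_n^*(g)}\le1$. By convexity, the supremum over $\lambda$ is attained at extreme points, that is, at $\lambda$ unimodular on a set $F$ and zero elsewhere. The vector $M_\lambda(\Ind_{\varepsilon,A}+g)$ is then $\Ind_{\varepsilon',A\cap F}+M_\lambda g$. By Claim~\ref{clm:A} its norm depends only on $\abs{A\cap F}$ and on $M_\lambda g$. Swapping $A$ for another set $A'$ with $\abs{A'}=\abs{A}$, disjoint from $\supp g$, and choosing $F'$ with $\abs{A'\cap F'}=\abs{A\cap F}$, gives the same value. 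Hence the supremum depends on $A$ only through $\abs{A}$, which is Property~(A).

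\textbf{Step 4: conclusion.} The final norm makes $\XB$ suppression $1$-unconditional and gives it Property~(A), so $\XB$ is isometrically greedy by Theorem~\ref{thm:AW}. The fundamental function is $\prim$. The dual fundamental function is still at most $\prim^*$, by Lemma~\ref{lem:DualBound} together with $\SNA(f,A)\le\prim^*(\abs{A})\Nnorm{f}$, and this gives isometric bidemocracy.
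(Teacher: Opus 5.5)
Your proposal is correct and follows the paper's route: Lemma~\ref{lem:UncRen}, then the construction of Theorem~\ref{thm:main}, then Theorem~\ref{thm:AW}. It also makes explicit what the paper's ``just combine'' leaves implicit, namely that suppression $1$-unconditionality must survive the construction of Theorem~\ref{thm:main}.

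Your extra lattice renorming in Step~3 is valid but unnecessary. You already proved $\Nnorm{M_\lambda f}=\Nnorm{f}$ for unimodular $\lambda$. Writing $S_F=\tfrac12(I+M_\lambda)$ with $\lambda=\chi_F-\chi_{\Nt\setminus F}$ then gives $\Nnorm{S_F f}\le\Nnorm{f}$. So your remark ``but not full suppression'' is mistaken: $\Nnorm{\cdot}$ is already (lattice) $1$-unconditional, and $\sup_{\lambda}\Nnorm{M_\lambda(f)}$ coincides with $\Nnorm{f}$.
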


\begin{proof}
Just combine Lemma~\ref{lem:UncRen}, Theorem~\ref{thm:main} and Theorem~\ref{thm:AW}.
\end{proof}

Theorem~\ref{thm:greedy} forms a natural bridge between the structural conditions developed earlier and the full isometric greedy theory. Its strength comes from the fact that unconditionality, bidemocracy, and the LRP together allow one to transfer the renorming produced in Section~\ref{sect:construction} directly to the setting of classical greedy inequalities. To make explicit how broadly this mechanism applies, we highlight below several situations in which the hypotheses of Theorem~\ref{thm:greedy} arise from familiar geometric or probabilistic properties of the ambient space. These consequences are standard to experts, but recording them here clarifies the scope of the renorming method and prepares the ground for the main isometric result of this section.

\begin{theorem}\label{thm:LRPURP}
Let $\XB$ be a greedy basis of a Banach space $\XX$. Assume that the fundamental function of $\XB$ has the LRP and the URP. Then $\XX$ can be renormed so that $\XB$ is isometrically greedy, isometrically bidemocratic, and lattice $1$-unconditional. Besides, there is a Dini regularization $\prim$ of $\usdf[\XB,\XX]$ such that $\prim$ and $\prim^*$ are nondecreasing. Given such a sequence $\prim$, we can choose the renorming so that the fundamental function of $\XB$ relative to it is $\prim$.
\end{theorem}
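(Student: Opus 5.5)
The plan is to reduce the statement to Theorem~\ref{thm:greedy}, and then check that the renorming provided there can also be made lattice $1$-unconditional. Theorem~\ref{thm:greedy} needs three hypotheses: unconditionality, bidemocracy, and the LRP of $\usdf[\XB,\XX]$.

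First, since $\XB$ is greedy, the Konyagin--Temlyakov characterization gives that $\XB$ is unconditional and democratic. In particular $\XB$ is almost greedy. Because the fundamental function has the URP, Lemma~\ref{lem:URPBD} shows that $\XB$ is bidemocratic. The LRP of $\usdf[\XB,\XX]$ is assumed.

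Theorem~\ref{thm:greedy} then yields an equivalent norm under which $\XB$ is isometrically greedy and isometrically bidemocratic. It also gives the existence of a Dini regularization $\prim$ with $\prim$ and $\prim^*$ nondecreasing, and lets us prescribe $\prim$ as the fundamental function. Note that LRP and URP pass to equivalent sequences, and any Dini regularization of $\usdf$ is equivalent to it, so the choice of $\prim$ is harmless.

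The one remaining point is lattice $1$-unconditionality, which Theorem~\ref{thm:greedy} does not state explicitly. The proof of Theorem~\ref{thm:greedy} first applies Lemma~\ref{lem:UncRen}, making $\XB$ lattice $1$-unconditional without changing $\usdf$ and without increasing the dual fundamental function, so bidemocracy is preserved. It then applies the construction of Theorem~\ref{thm:main}. So I must check that the norm $\Nnorm{\cdot}$ of Theorem~\ref{thm:main}, built over a lattice $1$-unconditional starting norm, is again invariant under multipliers $M_\lambda$ with $\lambda\in B_{\ell_\infty}$.

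The term $\SNA(f,A)$ is clearly monotone in $\abs{\xx_n^*(f)}$. The term $\abs{\SNB(f,f^*,A\cup B)}$ is the harder one, and this is the main obstacle I expect: the supremum ranges over greedy sets $B$ of $f^*$, and multiplying $f$ by $\lambda$ amounts to replacing $f^*$ by $M_\lambda^*f^*$. That replacement changes the greedy sets, so invariance is not immediate.

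I would first try to avoid this issue rather than resolve it. Apply the renorming of Theorem~\ref{thm:main} first, and afterwards take
\[
f\mapsto\sup\enbrace{\Nnorm{M_\lambda(f)}\colon\lambda\in B_{\ell_\infty}}.
\]
This new norm is lattice $1$-unconditional. It leaves $\Nnorm{\Ind_{\varepsilon,A}}=\prim(\abs{A})$ unchanged, since Property~(A) and the fact that the fundamental function is $\prim$ give $\Nnorm{\Ind_{\varepsilon,A}}$ independent of signs, and suppression of coordinates is handled by monotonicity of $\prim$ together with convexity. It also does not enlarge the dual fundamental function, so isometric bidemocracy survives.

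Property~(A) should also survive. Take a vector $\Ind_{\varepsilon,A}+g$ with $\abs{\xx_n^*(g)}\le 1$. Applying $M_\lambda$ to it and then changing signs or permuting within the largest block gives a vector of the same form, up to a convex combination that handles the moduli of $\lambda$ on $A$. Property~(A) of $\Nnorm{\cdot}$ then gives the required inequality.

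Finally, lattice $1$-unconditionality together with Property~(A), via Theorem~\ref{thm:AW}, gives that $\XB$ is $1$-greedy in the final norm. If the convexity step for Property~(A) fails, the fallback is to verify directly that the supremum in Claim~\ref{clm:A} is monotone under coefficient contractions off the set $D$.
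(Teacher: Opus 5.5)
Your reduction (greedy gives unconditional and almost greedy, the URP gives bidemocracy by Lemma~\ref{lem:URPBD}, then Theorem~\ref{thm:greedy}) is exactly the paper's one-line proof. You depart from it only in how you secure lattice $1$-unconditionality, and your route is correct.

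The paper obtains lattice $1$-unconditionality inside the proof of Theorem~\ref{thm:greedy}. There Lemma~\ref{lem:UncRen} is applied \emph{before} Theorem~\ref{thm:main}, and it is tacitly used that $\Nnorm{\cdot}$ inherits lattice $1$-unconditionality from the starting norm. The obstacle you foresaw there is harmless. For $(A,B,f^*)\in\Omega$ and $\lambda\in B_{\ell_\infty}$, use $g^*=M_\mu[\XB^*,\YY](f^*)$ with $\mu=\lambda$ off $A\cup B$ and $\mu=1$ on $A\cup B$. Then:
\begin{itemize}
\item $g^*\in B_\YY$, since the starting norm is lattice $1$-unconditional;
\item $B\in\Gt(g^*)$, because the coefficients on $B$ are unchanged and the others are not enlarged;
\item $\SNB(f,g^*,A\cup B)=\SNB(M_\lambda(f),f^*,A\cup B)$;
\item $\SNA(M_\lambda(f),A)\le \SNA(f,A)$.
\end{itemize}
Hence $\Nnorm{M_\lambda(f)}\le\Nnorm{f}$.

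Your alternative is to take the lattice hull $\sup_{\lambda\in B_{\ell_\infty}}\Nnorm{M_\lambda(f)}$ \emph{after} Theorem~\ref{thm:main}. This also works, and the convexity step does go through:
\begin{itemize}
\item $(\lambda_n\varepsilon_n)_{n\in A}$ is a convex combination of elements of $\EE^A$, so $M_\lambda(\Ind_{\varepsilon,A}+g)$ is a convex combination of vectors $\Ind_{\eta,A}+M_\lambda(g)$.
\item Property~(A) of $\Nnorm{\cdot}$, applied with $M_\lambda(g)$ in place of $g$, bounds each of these by $\Nnorm{\Ind_{\delta,B}+M_\lambda(g)}=\Nnorm{M_{\lambda'}(\Ind_{\delta,B}+g)}$, where $\lambda'=\lambda$ off $B$ and $\lambda'=1$ on $B$.
\item The same convexity keeps the fundamental function equal to $\prim$.
\item The new norm dominates the old one, so the dual fundamental function stays below $\prim^*$.
\end{itemize}

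Your route uses Theorem~\ref{thm:main} purely as a black box, and does not even need Lemma~\ref{lem:UncRen}, at the cost of one extra renorming. The paper's route is shorter but rests on the unstated invariance above. The fallback you mention is unnecessary.
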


\begin{proof}
Just combine Lemma~\ref{lem:URPBD} with Theorem~\ref{thm:greedy}.
\end{proof}

We close the theoretical part of this section by discussing the role of Rademacher type and cotype.
\begin{theorem}
Let $\XB$ be a bidemocratic unconditional basis of a Banach space $\XX$ with nontrivial cotype. Then $\XX$ can be renormed so that $\XB$ is isometrically greedy, isometrically bidemocratic, and lattice $1$-unconditional. Moreover, there is a Dini regularization $\prim$ of $\usdf[\XB,\XX]$ such that $\prim$ and $\prim^*$ are nondecreasing. Given such a sequence $\prim$ we can choose the renorming so that the fundamental function of $\XB$ relative to it is $\prim$.
\end{theorem}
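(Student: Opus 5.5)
The plan is to reduce the statement to Theorem~\ref{thm:greedy}. Its hypotheses are that $\XB$ is unconditional and bidemocratic, both assumed here, and that $\usdf[\XB,\XX]$ has the LRP. Once the LRP is established, Theorem~\ref{thm:greedy} gives a renorming under which $\XB$ is isometrically greedy and isometrically bidemocratic. It also gives the existence of a Dini regularization $\prim$ with $\prim$ and $\prim^*$ nondecreasing, and lets us prescribe $\prim$ as the fundamental function. Lattice $1$-unconditionality is obtained as in the proof of Theorem~\ref{thm:greedy}.

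That proof first applies Lemma~\ref{lem:UncRen}, then Theorem~\ref{thm:main}. To keep lattice $1$-unconditionality at the end, I will check that the norm built in Theorem~\ref{thm:main} preserves it. The quantity $\SNA(f,A)$ depends only on $\abs{\Ft(f)}$. For the term $\SNB(f,f^*,A\cup B)$ with $f$ replaced by $M_\lambda f$, $\abs{\lambda_n}\le 1$, note that $f^*\circ M_\lambda = M_\lambda^*(f^*)$. This functional lies in $\YY$ and, by lattice $1$-unconditionality of $\XB^*$ (inherited from the renorming of Lemma~\ref{lem:UncRen}), in $B_\YY$.

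The one delicate point is the greedy-set condition $B\in\Gt(f^*)$. Multiplying coefficients by $\lambda$ can destroy the fact that $B$ is a greedy set. I would avoid this issue by instead applying the final renorming
\[
f\mapsto\sup\enbrace{\Nnorm{M_\lambda f}\colon \lambda\in B_{\ell_\infty}}
\]
once more, exactly as in Lemma~\ref{lem:UncRen}. This operation preserves isometric greediness, since that lemma states it keeps the greedy constant. It also keeps the fundamental function equal to $\prim$, since $\Nnorm{\cdot}$ is already invariant under sign changes on indicator sums by Property~(A)/superdemocracy with constant $1$ on $\abs{A}$-sets. Finally, it does not increase the dual fundamental function, so isometric bidemocracy survives through \eqref{eq:SemiBi} and Lemma~\ref{lem:DualBound}.

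The substantive step is the LRP of $\usdf[\XB,\XX]$. An unconditional basis is in particular quasi-greedy, so Lemma~\ref{lem:CTLRP} applies because $\XX$ has nontrivial cotype, and $\lsdf[\XB,\XX]$ has the LRP. Bidemocracy implies superdemocracy, hence $\usdf\approx\lsdf$. Moreover, the LRP passes to equivalent sequences: if $\lsdf(rm)\ge 2\lsdf(m)$ and $\usdf\le C\lsdf\le C\usdf$, iterate to get $\lsdf(r^k m)\ge 2^k\lsdf(m)$, and choose $k$ with $2^k\ge 2C$. Then $\usdf(r^k m)\ge \lsdf(r^km)\ge 2C\lsdf(m)\ge 2\usdf(m)$. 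So $\usdf$ has the LRP with $r^k$ in place of $r$. I expect no real obstacle beyond bookkeeping the lattice $1$-unconditionality in the last renorming step.
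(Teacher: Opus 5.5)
Your proof is correct and follows the paper's route: the paper's proof just combines Lemma~\ref{lem:CTLRP} with Theorem~\ref{thm:greedy}, and you additionally spell out the implicit step that the LRP passes from $\lsdf$ to $\usdf$ via superdemocracy. Your final re-application of the renorming of Lemma~\ref{lem:UncRen} to secure lattice $1$-unconditionality is valid; the ``delicate point'' can also be settled directly by replacing $\lambda$ with $1$ on $B$, which leaves the value of $\SNB$ unchanged and keeps $B$ a greedy set of the modified functional, so the norm of Theorem~\ref{thm:main} already preserves lattice $1$-unconditionality.
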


\begin{proof}
Just combine Lemma~\ref{lem:CTLRP} with Theorem~\ref{thm:greedy}.
\end{proof}

\begin{theorem}
Let $\XB$ be a greedy basis of a Banach space $\XX$ with nontrivial type. Then $\XX$ can be renormed so that $\XB$ is isometrically greedy, isometrically bidemocratic, and lattice $1$-unconditional. Besides, there is a Dini regularization $\prim$ of $\usdf[\XB,\XX]$ such that $\prim$ and $\prim^*$ are nondecreasing. Given such a sequence $\prim$ we can choose the renorming so that the fundamental function of $\XB$ relative to it is $\prim$.
\end{theorem}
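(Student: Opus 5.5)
The plan is to reduce the statement to Theorem~\ref{thm:LRPURP}. That theorem requires a greedy basis whose fundamental function has both the LRP and the URP. So it suffices to check that, when $\XX$ has nontrivial type, $\usdf[\XB,\XX]$ has these two regularity properties.

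\emph{URP.} A greedy basis is quasi-greedy. Lemma~\ref{lem:TURP} therefore applies directly and gives that $\usdf[\XB,\XX]$ has the URP.

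\emph{LRP.} Here I use the classical fact (Maurey--Pisier) that a Banach space with nontrivial type also has nontrivial cotype, since it cannot contain $\ell_\infty^n$'s uniformly. Lemma~\ref{lem:CTLRP} then shows that $\lsdf[\XB,\XX]$ has the LRP. A greedy basis is democratic, hence superdemocratic, by the Konyagin--Temlyakov characterization together with unconditionality. Thus $\lsdf\approx\usdf$, and since the LRP passes to equivalent sequences, $\usdf[\XB,\XX]$ has the LRP as well.

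With both properties in hand, Theorem~\ref{thm:LRPURP} gives a renorming under which $\XB$ is isometrically greedy, isometrically bidemocratic, and lattice $1$-unconditional. The same theorem supplies the Dini regularization $\prim$ with $\prim$ and $\prim^*$ nondecreasing, and lets us prescribe it as the fundamental function. Bidemocracy is never needed as a separate hypothesis: it comes from Lemma~\ref{lem:URPBD} inside that theorem, because greedy implies almost greedy.

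The only step not stated earlier in the paper is the implication from nontrivial type to nontrivial cotype, which I would quote as a standard fact. If one prefers to avoid it, there is an alternative route. The URP of $\usdf$ means its dual sequence has the LRP. Bidemocracy (from Lemma~\ref{lem:URPBD}) together with \eqref{eq:SemiBi} shows that this dual sequence is equivalent to the fundamental function of $\XB^*$. One could then try to get the LRP for $\usdf$ from the type of $\XX$ by working with $\XB^*$ in $\XX^*$. I expect this detour to be messier, so I would use the cotype argument.
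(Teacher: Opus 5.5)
Your proposal is correct and takes the same route as the paper, whose proof simply combines Lemma~\ref{lem:TURP}, Lemma~\ref{lem:CTLRP} and Theorem~\ref{thm:LRPURP}. You also make explicit two steps the paper leaves tacit: that nontrivial type implies nontrivial cotype (Maurey--Pisier), and that the LRP of $\lsdf[\XB,\XX]$ transfers to $\usdf[\XB,\XX]$ because greedy bases are superdemocratic and the LRP is stable under equivalence.
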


\begin{proof}
Just combine Lemma~\ref{lem:TURP}, Lemma~\ref{lem:CTLRP} and Theorem~\ref{thm:LRPURP}.
\end{proof}

In the general case, we can make the most of Theorem~\ref{thm:greedy} as far as applications is concerned when the fundamental function is a power function.

Given $\alpha\in\RR$, we denote by $\prim_\alpha$ the sequence given by $m\mapsto m^\alpha$.

\begin{theorem}\label{cor:potential}
Let $\XB$ be a greedy basis of a Banach space $\XX$. Assume that the fundamental function of $\XB$ is equivalent to $\prim_\alpha$ for some $0<\alpha<1$. Then $\XX$ can be renormed so that $\XB$ is isometrically greedy, isometrically bidemocratic, lattice $1$-unconditional, and its fundamental function is $\prim_a$.
\end{theorem}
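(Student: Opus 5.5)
The plan is to apply Theorem~\ref{thm:LRPURP} with the prescribed sequence $\prim=\prim_\alpha$. Three things need checking: that $\usdf[\XB,\XX]$ has the LRP and the URP, that $\prim_\alpha$ is a Dini regularization of $\usdf[\XB,\XX]$, and that both $\prim_\alpha$ and $\prim_\alpha^*$ are nondecreasing. Once these hold, Theorem~\ref{thm:LRPURP} gives a renorming in which $\XB$ is isometrically greedy, isometrically bidemocratic and lattice $1$-unconditional, with fundamental function exactly $\prim_\alpha$.

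For the regularity properties, note that the LRP and the URP pass to equivalent sequences, so it suffices to check them for $\prim_\alpha$. For the LRP, choose $r\in\NN$ with $r^\alpha\ge 2$; this is possible since $\alpha>0$. Then $\prim_\alpha(rm)=r^\alpha m^\alpha\ge 2\prim_\alpha(m)$. For the URP, observe that $\prim_\alpha^*=\prim_{1-\alpha}$ with $1-\alpha>0$, so the same argument gives the LRP for the dual sequence. By hypothesis $\usdf[\XB,\XX]\approx\prim_\alpha$, so $\usdf[\XB,\XX]$ has both properties. Monotonicity is immediate: $m^\alpha$ and $m^{1-\alpha}$ are increasing because $0<\alpha<1$.

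For Dini regularity, we need $m^\alpha-(m-1)^\alpha\approx m^{\alpha-1}$ for $m\in\NN$ (recall the convention $\prim(0)=0$). The mean value theorem gives $m^\alpha-(m-1)^\alpha=\alpha\xi^{\alpha-1}$ for some $\xi\in(m-1,m)$. Since $\alpha-1<0$, this yields the lower bound $\alpha m^{\alpha-1}$. For $m\ge2$ we have $\xi\ge m/2$, which gives the upper bound $\alpha 2^{1-\alpha}m^{\alpha-1}$. For $m=1$ the difference equals $1=m^{\alpha-1}$. Hence $\prim_\alpha$ is Dini-regular, and since it is equivalent to $\usdf[\XB,\XX]$, it is a Dini regularization of it.

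None of these steps presents a real obstacle. The only point needing care is the $m=1$ boundary in the Dini estimate, which is handled by the convention that sequences are extended by $0\mapsto0$.
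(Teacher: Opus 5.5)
Your proposal is correct and follows the paper's proof: both reduce the statement to Theorem~\ref{thm:LRPURP} with $\prim=\prim_\alpha$, using that $\prim_\alpha$ is increasing and Dini-regular with dual sequence $\prim_{1-\alpha}$. The only minor differences are these: the paper obtains the LRP and the URP of $\usdf[\XB,\XX]$ from Lemma~\ref{lem:LRPDini}, whereas you verify them directly for power functions (via $r^\alpha\ge 2$) and transfer them through equivalence; and you supply the mean-value-theorem check of Dini regularity, which the paper merely asserts.
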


\begin{proof}
The power function $\prim_\alpha$ is increasing and Dini-regular for all $\alpha\in(0,1)$, with dual sequence $\prim_{1-\alpha}$. Hence, by Lemma~\ref{lem:LRPDini}, $\usdf[\XB,\XX]$ has the LRP and the URP. So, Theorem~\ref{thm:LRPURP} applies.
\end{proof}
\subsection{The Haar system in \texorpdfstring{$\bm{L_p}$}{}, \texorpdfstring{$\bm{1<p<\infty}$}{}.}\label{Sec:Haar}
The Haar system occupies a central position in classical analysis, acting simultaneously as a model unconditional basis, a canonical wavelet system, and a benchmark example for nonlinear approximation methods. In this section we provide background on the structure and approximation properties of the Haar basis in $L_p:=L_p[0,1]$, $1<p<\infty$, and explain why the problem of achieving isometric greediness through renorming posed in~\cite{AW2006} has been a long-standing problem.

Let $\Dt$ denote the collection of dyadic intervals in $[0,1]$, and for each $I\in\Dt$ let $I_L$ and $I_R$ be its left and right halves. The \emph{Haar function} associated with $I$ is
\[
h_I = \chi_{I_L} - \chi_{I_R}.
\]
Together with the constant function $h_0=1$, these functions form the classical Haar system. Its importance stems not only from its simplicity and orthogonality in $L_2$ but also from the fact that it is the prototypical example of a wavelet system with compact support and minimal smoothness. It is often convenient to normalize the Haar system according the space we study it. Set $\Dt_0=\Dt\cup\{0\}$ and, given $1\le p\le \infty$,
\[
\Ht_p = \enpar{\frac{h_I}{\norm{h_I}_p}}_{I\in\Dt_0}.
\]
Let $p'$ be the conjugate exponent of $p$ defined by
\[
\frac{1}{p}+\frac{1}{p'}=1.
\]
The Haar system is an Schauder basis of $L_p$ for all $p\in[1,\infty)$, and it is unconditional if and only if $p>1$ \cite{Paley1932}. If we identify $L_{p'}$ with the dual space of $L_p$, then dual basis of $\Ht_p$ is $\Ht_{p'}$.

A long-standing question in the geometry of $L_p$ was the optimal value of the unconditionality constant of the Haar basis. In a celebrated series of results, Burkholder established (see \cite{Burk1988}) that $K_u[\Ht_p] = p^* -1$, where
\[
p^* = \max\{p, p'\}.
\]

The nonlinear approximation properties of the Haar basis were studied extensively in the late 1990s. In particular, Temlyakov proved in \cite{Temlyakov1998} that for every $1<p<\infty$ the normalized Haar system $\Ht_p$ is a greedy basis of $L_p$. In fact, there exists a constant $C>0$ such that $\Ht_p$ is $C\,p^{*}$-democratic for all $1<p<\infty$. Combining this estimate with the general bounds for the greedy constant in terms of democracy and unconditionality yields
\[
p^{*}\lesssim C_g[\Ht_p]\lesssim (p^{*})^{3}.
\]
More refined arguments show that the greedy constant of the Haar system grows linearly with $p^{*}$ as $p$ approaches the endpoints $1$ or $\infty$ \cite{AAB2020}.

These estimates indicate that, when $L_p$ is endowed with its classical norm, the greedy behavior of the Haar system deteriorates as $p$ moves away from the Hilbertian case $p=2$. This phenomenon provides strong motivation for investigating renormings of $L_p$ that prevent the growth of the greedy constant and restore optimal greedy symmetry.

The authors of \cite{AW2006} explicitly asked whether the Haar system in $L_p$ can be made isometrically greedy under an equivalent renorming (\cite{AW2006}*{Problem 6.2}). This question has remained open for two decades and is singled out in the literature as one of the most natural and important renorming problems in greedy approximation theory (see \cite{DKOSZ2014}*{Problem 14} or \cite{AAT2025}*{Problem 2}).

Towards a positive answer, Albiac and Wojtaszczyk showed \cite{AW2006}*{Proposition 4.5} that under an equivalent norm, for any $\epsilon>0$, there is a subsequence of the Haar basis which is $1$-unconditional and $(1+\epsilon)$-democratic, and whose closed linear span is isomorphic to $L_p$. Partial progress was made in \cites{DOSZ2011,DKOSZ2014}. Specifically, the authors used Theorem~\ref{thm:DKOSZ:4} to see that $L_p$ can be renormed so that $\Ht_p$ becomes lattice $1$-unconditional, isometrically bidemocratic, and, for fixed $\epsilon>0$, $(1+\varepsilon)$-greedy (see \cite{DOSZ2011}*{Corollary 2.3} and \cite{DKOSZ2014}*{Corollary 5}).

These results demonstrate that the Haar system is remarkably close to being $1$-greedy under suitable renormings. Nevertheless, the precise constant $1$ remained out of reach, and the existing methods lacked the necessary structure to handle the symmetry conditions required by Property~(A) in the isometric theory.

As an application of our techniques from the preceding sections we obtain that, contrary to what was announced (without a proof) in the \emph{Concentration week on greedy algorithms in Banach spaces and compressed sensing} held on July18--22, 2011 at TexasA\&M University, $L_p$ can in fact be renormed so that the Haar system is isometrically greedy.

\begin{theorem}
Let $1<p<\infty$. Then the Lebesgue space $L_p$ can be renormed so that $\Ht_p$
is isometrically greedy, isometrically bidemocratic, and lattice $1$-unconditional, and its fundamental function is $\prim_{1/p}$.
\end{theorem}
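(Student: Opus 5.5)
The plan is to deduce the statement directly from Theorem~\ref{cor:potential} with $\alpha=1/p$. Since $1<p<\infty$, we have $\alpha\in(0,1)$. So the only thing left to verify is that $\Ht_p$ is a greedy basis of $L_p$ whose fundamental function $\usdf[\Ht_p,L_p]$ is equivalent to $\prim_{1/p}$. Once that is done, Theorem~\ref{cor:potential} gives a renorming under which $\Ht_p$ is isometrically greedy, isometrically bidemocratic and lattice $1$-unconditional, with fundamental function exactly $\prim_{1/p}$.

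Greediness is Temlyakov's theorem \cite{Temlyakov1998}: $\Ht_p$ is unconditional (Paley, Burkholder) and democratic, hence greedy by Konyagin--Temlyakov.

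The remaining step is the estimate $\usdf[\Ht_p,L_p](m)\approx m^{1/p}$. I would obtain it from the square-function characterization of $L_p$ given by unconditionality and Khintchine's inequality. For $A\subset\Dt_0$ with $\abs{A}=m$ and any signs $\varepsilon$, we have
\[
\norm{\Ind_{\varepsilon,A}[\Ht_p,L_p]}_p\approx \norm{\Big(\sum_{I\in A}\abs{I}^{-2/p}\chi_I\Big)^{1/2}}_p .
\]
Temlyakov's lemma then gives that this quantity is comparable to $\norm{\sup_{I\in A}\abs{I}^{-1/p}\chi_I}_p\approx m^{1/p}$. The point is that, at each $x$, the sum $\sum_{I\ni x,\,I\in A}\abs{I}^{-2/p}$ is geometric along a nested dyadic chain, so it is dominated by its largest term. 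The upper bound then follows by integrating the maximal function over $\bigcup_{I\in A}I$, and the lower bound from the disjoint "top-level" pieces.

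Alternatively, and more cheaply, one can use the fact that $L_p$ has type $\min\{p,2\}>1$ and cotype $\max\{p,2\}<\infty$. Then Lemmas~\ref{lem:CTLRP} and~\ref{lem:TURP} give the LRP and URP, and Theorem~\ref{thm:LRPURP} already yields a renorming with fundamental function some Dini regularization $\prim$ of $\usdf$. To pin the fundamental function down as $\prim_{1/p}$, we still need $\usdf\approx\prim_{1/p}$, so the power-type estimate above cannot be avoided. It is the main (but classical) obstacle.

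For that estimate, the upper bound $\lesssim m^{1/p}$ can be checked on the extremal configuration of nested intervals, and the general case follows from the pointwise domination of the square function by a constant multiple of the maximal term. The lower bound $\gtrsim m^{1/p}$ also follows from duality: applying \eqref{eq:SemiBi} together with the same upper bound for $\Ht_{p'}$ in $L_{p'}$ gives $m\le\usdf[\Ht_p](m)\,\usdf[\Ht_{p'}](m)\lesssim \usdf[\Ht_p](m)\,m^{1/p'}$, and the lower superdemocracy function is handled symmetrically.
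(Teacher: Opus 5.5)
Your proposal is correct and follows the paper's proof: the paper also cites Temlyakov for the facts that $\Ht_p$ is a greedy basis of $L_p$ with fundamental function equivalent to $\prim_{1/p}$, and then applies Theorem~\ref{cor:potential} with $\alpha=1/p\in(0,1)$. Your extra sketch of $\usdf[\Ht_p,L_p](m)\approx m^{1/p}$ is not needed for this, and the duality argument via \eqref{eq:SemiBi} is the clean way to get its lower bound. By contrast, the ``top-level pieces'' remark is too vague as stated: if it means the maximal intervals of $A$, these give only a bounded contribution for a nested chain, and making it work requires the level-set bound $\int_0^1\max_{I\in A}\abs{I}^{-1}\chi_I\gtrsim\abs{A}$.
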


\begin{proof}
$\Ht_p$ is a greedy basis of $L_p$ with fundamental function equivalent to $\prim_{1/p}$ (see \cite{Temlyakov1998}). Hence the statement follows directly from Theorem~\ref{cor:potential}.
\end{proof}
\subsection{Direct sums and Besov spaces}\label{Sect:Besov}
The renorming theorems obtained in Section~\ref{Sec:Haar} extend far beyond the classical setting of the Haar system, and this section illustrates this breadth through two broad families of spaces where greedy-type bases arise naturally: Besov sequence spaces and mixed-norm infinite direct sums. These spaces provide a flexible framework for multiscale decompositions and vector-valued approximation, and their canonical bases often exhibit the same power-function fundamental behavior as the Haar system. As a consequence, the renorming machinery developed earlier applies with full force, yielding isometrically greedy and bidemocratic bases under mild structural hypotheses. The results below show that the phenomenon observed for $L_p$ is not exceptional: entire classes of function space models admit renormings that achieve the optimal symmetry and greedy properties simultaneously.

\begin{theorem}
Let $1<p<\infty$ and $1\le q\le \infty$. Then the Besov space
\[
B_{p,q}=\enpar{\bigoplus_{n=1}^\infty \ell_q^n}_{\ell_p}
\]
can be renormed so that it has an isometrically greedy, isometrically bidemocratic, and lattice $1$-unconditional basis whose fundamental function is $\prim_{1/p}$.
\end{theorem}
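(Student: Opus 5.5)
The plan is to reduce the statement to Theorem~\ref{cor:potential}. For that we need a greedy basis of $B_{p,q}$ whose fundamental function is equivalent to $\prim_{1/p}$. The unit vector system of $B_{p,q}$ is $1$-unconditional but not democratic when $q\neq p$, since sets inside a single block $\ell_q^n$ grow like $m^{1/q}$ while spread-out sets grow like $m^{1/p}$. So it cannot be used directly. Instead we invoke the classical fact that $B_{p,q}$ is isomorphic to a space with a basis of the right kind.

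First, recall the well-known result that $B_{p,q}=(\bigoplus_n \ell_q^n)_{\ell_p}$, $1<p<\infty$, $1\le q\le\infty$, has a greedy basis with fundamental function $\approx m^{1/p}$. One route is through the standard identification of this sequence space with a Besov function space $\dot B^{s}_{p,q}$ on $[0,1]$ (or $\RR^d$), via a wavelet isomorphism. In that space a suitably normalized wavelet (or Haar-type/spline) basis, renormalized in the $L_p$ sense, is known to be greedy with fundamental function $\approx m^{1/p}$. Alternatively, one can rely directly on the literature result that $(\bigoplus \ell_q^n)_{\ell_p}$ has a greedy basis for $1<p<\infty$, obtained by decomposing each $\ell_q^n$ with a basis whose democracy function is controlled and gluing it to the $\ell_p$ structure. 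I would cite this as the known input; the paper's remark that ``their canonical bases often exhibit the same power-function fundamental behavior as the Haar system'' indicates this is intended.

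Second, with such a greedy basis $\XB$ of $B_{p,q}$ satisfying $\usdf[\XB,B_{p,q}]\approx\prim_{1/p}$ in hand, note that $0<1/p<1$. Theorem~\ref{cor:potential} then gives a renorming of $B_{p,q}$ under which $\XB$ is isometrically greedy, isometrically bidemocratic and lattice $1$-unconditional, with fundamental function exactly $\prim_{1/p}$. The hypotheses LRP and URP, and hence bidemocracy, come for free from the power behaviour, as in the proof of Theorem~\ref{cor:potential}.

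The main obstacle is the first step: producing or identifying the greedy basis with power fundamental function for every $q\in[1,\infty]$. This includes the non-reflexive cases $q=1$ and $q=\infty$, where type and cotype arguments give nothing uniform, so we need the explicit basis. If a citation is unavailable, I would construct the basis by hand as follows. Use a basis of each $\ell_q^{n}$, for $n=2^k$, that is uniformly democratic with fundamental function $\approx (m/n)^{1/p}\,n^{1/p}$-compatible scaling, for instance a Haar-type system in $\ell_q^n$ normalized appropriately. Then verify democracy across blocks by the $\ell_p$-sum structure.
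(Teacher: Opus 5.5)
Your main line is the paper's proof. The paper quotes from \cite{DFOS2011} that $B_{p,q}$ has a greedy basis with fundamental function equivalent to $\prim_{1/p}$, and then applies Theorem~\ref{cor:potential} with $\alpha=1/p\in(0,1)$. The LRP, the URP and bidemocracy then follow from the power behaviour, as you say. Once you commit to that citation, your argument is complete.

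The justifications you sketch for the cited input are wrong, however, and should be deleted, because they misplace the difficulty.

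The wavelet route fails. Under the wavelet isomorphism, a semi-normalized wavelet basis of a Besov function space corresponds to the unit vector basis of the mixed-norm sequence space. (In standard notation $B^s_{p,q}$ has inner exponent $p$ and outer exponent $q$, so it is the paper's $B_{q,p}$.) That is exactly the non-democratic basis you rightly discarded in your first paragraph. The failure of democracy of these natural bases is what made the existence of \emph{some} greedy basis a genuine problem, solved in \cite{DFOS2011}.

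Your blockwise fallback cannot work either. A greedy basis is unconditional, so if it were a union of bases of the blocks $\ell_q^n$, these would be uniformly unconditional and uniformly semi-normalized. Take $q=2$: averaging over signs and the parallelogram law show that such bases of $\ell_2^n$ are uniformly equivalent to the unit vector basis of $\ell_2^n$. Then $m$ vectors in one block have norm of order $m^{1/2}$, while $m$ vectors in distinct blocks have norm of order $m^{1/p}$, so democracy fails for $p\neq 2$. In that case $B_{p,2}$ is isomorphic to $\ell_p$ by Pe{\l}czy\'{n}ski's decomposition method. A greedy basis there is the image of the unit vector basis of $\ell_p$ under an isomorphism that mixes the blocks, not one built block by block.

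Finally, $B_{p,1}$ and $B_{p,\infty}$ are reflexive, being $\ell_p$-sums of finite-dimensional spaces with $1<p<\infty$. What fails there is superreflexivity. In any case type and cotype play no role, because the power-type fundamental function yields the LRP and URP directly.
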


\begin{proof}
It is known \cite{DFOS2011} that $B_{p,q}$ has a greedy basis whose fundamental function equivalent to $\prim_{1/p}$. Hence, Theorem~\ref{cor:potential} applies.
\end{proof}

\begin{theorem}\label{thm:IDS}
Let $1<p<\infty$ and $C\in[1,\infty)$. Let $J$ be a countable set. Suppose that for each $j\in J$, $\XX_j$ is a Banach space with a suppression $C$-unconditional $C$-democratic basis $\XB_j$ whose fundamental function is $C$-equivalent to $\prim_{1/p}$. Then
\[
\XX=\enpar{\bigoplus_{j\in J} \XX_j}_{\ell_p}
\]
can be renormed so that $\XB:=\oplus_{j\in J} \XB_j$ is isometrically greedy, isometrically bidemocratic and lattice $1$-unconditional, and its fundamental function is $\prim_{1/p}$.
\end{theorem}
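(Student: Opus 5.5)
The plan is to reduce the statement to Theorem~\ref{cor:potential}. That requires showing two things about $\XB=\oplus_{j\in J}\XB_j$, viewed as a system in $\XX=\enpar{\bigoplus_{j\in J}\XX_j}_{\ell_p}$ and indexed by the disjoint union $\Nt=\bigsqcup_j \Nt_j$: that it is a greedy basis, and that its fundamental function is equivalent to $\prim_{1/p}$. Once both are in place, Theorem~\ref{cor:potential} yields a renorming under which $\XB$ is isometrically greedy, isometrically bidemocratic and lattice $1$-unconditional, with fundamental function exactly $\prim_{1/p}$.

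\emph{Unconditionality.} Let $A\subset\Nt$ be finite and write $A_j=A\cap\Nt_j$. For $f=(f_j)_j$ we have $S_A(f)=(S_{A_j}(f_j))_j$. Hence $\norm{S_A(f)}^p=\sum_j\norm{S_{A_j}(f_j)}^p\le C^p\norm{f}^p$, so $\XB$ is a suppression $C$-unconditional basis of $\XX$. Completeness and minimality are clear, because finitely supported vectors are dense in the $\ell_p$-sum. The dual system is $\oplus_j\XB_j^*$ sitting inside $\enpar{\bigoplus_j\XX_j^*}_{\ell_{p'}}$, and it is bounded: indeed $\norm{\xx_n^*}\le C/\norm{\xx_n}$ and $\norm{\xx_n}\approx 1$, since the fundamental function at $1$ is comparable to $1$.

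\emph{Fundamental function.} This is the main step, and I expect it to be the only real obstacle. Take $\varepsilon\in\EE^A$ and set $m_j=\abs{A_j}$, so that
\[
\norm{\Ind_{\varepsilon,A}}^p=\sum_j\norm{\Ind_{\varepsilon,A_j}[\XB_j,\XX_j]}^p .
\]
Each summand lies between $\lsdf[\XB_j,\XX_j](m_j)^p$ and $\usdf[\XB_j,\XX_j](m_j)^p$. By $C$-democracy together with suppression $C$-unconditionality, the lower function $\lsdf[\XB_j]$ is comparable to $\usdf[\XB_j]$, with a constant depending only on $C$: for sign-dependent vectors use the standard estimate $\usdf\le 2\kappa C\,\phi_{\text{dem}}$ via unconditionality, where $\kappa=2$ in the complex case. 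By hypothesis $\usdf[\XB_j]$ is $C$-equivalent to $\prim_{1/p}$. So each summand is comparable to $m_j$, uniformly in $j$, and therefore
\[
\norm{\Ind_{\varepsilon,A}}^p\approx\sum_j m_j=\abs{A}.
\]
The point that makes this work is that $p$-th powers of $\prim_{1/p}$ are additive. Consequently $\usdf[\XB,\XX]\approx\lsdf[\XB,\XX]\approx\prim_{1/p}$, and $\XB$ is democratic.

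By Konyagin--Temlyakov, unconditional together with democratic gives greedy. The hypotheses of Theorem~\ref{cor:potential} then hold with $\alpha=1/p\in(0,1)$, and the conclusion follows, including the prescription of the fundamental function as $\prim_{1/p}$. No LRP or URP verification is needed separately, since Theorem~\ref{cor:potential} already deduces it from the power-type behaviour.
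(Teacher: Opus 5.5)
Your proposal is correct and follows the paper's route: the paper cites \cite{AABW2021}*{Lemma 11.12} for the facts that $\XB$ is an unconditional democratic basis with fundamental function equivalent to $\prim_{1/p}$ and then invokes Theorem~\ref{cor:potential}, while you prove those facts directly, using that $\norm{\cdot}^p$ is additive across the $\ell_p$-sum and that all constants are uniform in $j$. One minor slip: the uniform lower bound on $\norm{\xx_n}$, which you need for the dual system to be bounded, comes from democracy (or from your later estimate $\lsdf\approx\usdf$), not from the fundamental function at $1$, which only controls $\sup_n\norm{\xx_n}$.
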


\begin{proof}
It is known (see \cite{AABW2021}*{Lemma 11.12}) that $\XB$ is an unconditional and democratic basis with fundamental function equivalent to $\prim_{1/p}$. So, Theorem~\ref{cor:potential} applies.
\end{proof}

For finite direct sums we give a result that works for fundamental functions that are not potential.

\begin{theorem}\label{thm:FDS}
Let $\prim$ be a Dini-regular sequence such that $\prim$ and $\prim^*$ are nondecreasing. Let $J$ be a finite set. Suppose that for each $j\in J$, $\XX_j$ is a Banach space with a suppression unconditional bidemocratic basis $\XB_j$ whose fundamental function is equivalent to $\prim$. Then $\XX=\oplus_{j\in J} \XX_j$ can be renormed so that $\XB=\oplus_{j\in J} \XB_j$ is isometrically greedy, isometrically bidemocratic and lattice $1$-unconditional, and its fundamental function is $\prim$.
\end{theorem}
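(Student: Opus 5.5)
The plan is to reduce Theorem~\ref{thm:FDS} to Theorem~\ref{thm:greedy}. We first show that $\XB=\oplus_{j\in J}\XB_j$ is an unconditional bidemocratic basis of $\XX$ whose fundamental function is equivalent to $\prim$, and that $\prim$ has the LRP. Then Theorem~\ref{thm:greedy} applies with the prescribed Dini-regular sequence $\prim$ itself. This is legitimate because $\prim$ is Dini-regular, $\prim$ and $\prim^*$ are nondecreasing, and $\prim\approx\usdf[\XB,\XX]$, so $\prim$ is a Dini regularization of $\usdf[\XB,\XX]$ of the required kind.

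\textbf{Step 1: $\XB$ is unconditional with $\usdf[\XB,\XX]\approx\prim$.} We equip $\XX$ with any norm equivalent to the $\ell_1$-sum of the $\XX_j$. Since $J$ is finite and each $\XB_j$ is suppression unconditional, $\XB$ is unconditional, with constant bounded by $\max_j$ of the constants of the $\XB_j$. Given $A\in\Pt_{<\infty}$ and $\varepsilon\in\EE^A$, write $A=\bigcup_j A_j$ with $A_j$ in the index set of $\XB_j$. Then
\[
\norm{\Ind_{\varepsilon,A}[\XB,\XX]}\approx\sum_{j\in J}\norm{\Ind_{\varepsilon,A_j}[\XB_j,\XX_j]}\lesssim\sum_{j\in J}\prim(\abs{A_j})\le\abs{J}\,\prim(\abs{A}),
\]
using that $\prim$ is nondecreasing. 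This gives $\usdf[\XB,\XX]\lesssim\prim$. For the lower bound, take $j$ with $\abs{A_j}\ge\abs{A}/\abs{J}$. Since $\prim^*$ is nondecreasing, $\prim(m/k)\ge\prim(m)/k$ up to rounding, so
\[
\norm{\Ind_{\varepsilon,A}}\gtrsim\prim(\abs{A_j})\gtrsim\prim(\abs{A})/\abs{J}.
\]
Hence $\lsdf[\XB,\XX]\approx\usdf[\XB,\XX]\approx\prim$. The rounding issue is harmless: $\prim(\lceil m/k\rceil)\ge\frac{\lceil m/k\rceil}{m}\prim(m)$ by monotonicity of $\prim^*$.

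\textbf{Step 2: $\XB$ is bidemocratic.} The dual basis is $\oplus_j\XB_j^*$ in $\XX^*=\oplus_j\XX_j^*$. Each $\XB_j$ is bidemocratic with fundamental function equivalent to $\prim$, so $\usdf[\XB_j^*,\XX_j^*]\lesssim m/\prim(m)=\prim^*(m)$. The same splitting argument as in Step~1, now using that $\prim^*$ is nondecreasing, gives $\usdf[\XB^*,\XX^*]\lesssim\prim^*$. Therefore
\[
\usdf[\XB,\XX](m)\,\usdf[\XB^*,\XX^*](m)\lesssim m,
\]
which is bidemocracy.

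\textbf{Step 3: $\prim$ has the LRP.} Since $\prim$ is Dini-regular and positive, the second half of the proof of Lemma~\ref{lem:LRPDini}, applied with $\primt=\prim$, shows that $\prim$ has the LRP. Consequently $\usdf[\XB,\XX]\approx\prim$ has the LRP, because the LRP passes to equivalent sequences.

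\textbf{Conclusion and main obstacle.} Theorem~\ref{thm:greedy} now yields a renorming in which $\XB$ is isometrically greedy and isometrically bidemocratic, with fundamental function $\prim$. It remains to secure lattice $1$-unconditionality. The renorming of Theorem~\ref{thm:greedy} is built on top of Lemma~\ref{lem:UncRen}. I expect this to be the main point to check: one must verify that the norm $\Nnorm{\cdot}$ of Theorem~\ref{thm:main} preserves lattice $1$-unconditionality when the starting norm already has it.

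The first term, $\SNA(f,A)$, depends only on the moduli $\abs{\xx_n^*(f)}$. The second term does not obviously preserve lattice unconditionality. If that check fails, I would replace $\Nnorm{f}$ by
\[
\sup\enbrace{\Nnorm{M_\lambda f}\colon\lambda\in\EE^{\Nt}}.
\]
This supremum runs over unimodular multipliers only, so it keeps Property~(A), whose definition is invariant under sign changes, and it keeps the values on the vectors $\Ind_{\varepsilon,A}$. I would then argue, as in the cited theorem, that the resulting basis is $1$-suppression unconditional and hence isometrically greedy by Theorem~\ref{thm:AW}.
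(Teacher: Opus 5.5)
Your proposal is correct and follows the paper's route. The paper cites Proposition 6.1 of GHO2013 for the fact that $\XB$ is unconditional and democratic with fundamental function equivalent to $\prim$, and then invokes Theorem~\ref{thm:greedy} with the given $\prim$. You verify these facts by hand, and you also check bidemocracy of the sum, which Theorem~\ref{thm:greedy} requires and the paper leaves implicit.

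Your fallback for lattice $1$-unconditionality works but is unnecessary. If the starting norm is lattice $1$-unconditional (Lemma~\ref{lem:UncRen}), then for each $\varepsilon\in\EE^{\Nt}$ the map $f^*\mapsto f^*\circ M_\varepsilon$ maps $B_\YY$ onto itself and preserves $\Gt(f^*)$. Hence the norm $\Nnorm{\cdot}$ of Theorem~\ref{thm:main} is already invariant under unimodular multipliers, and by convexity this gives lattice $1$-unconditionality.
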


\begin{proof}
By \cite{GHO2013}*{Proposition 6.1}, $\XB$ is an unconditional democratic basis of $\XX$ with fundamental function equivalent to $\prim$. So, Theorem~\ref{thm:greedy} applies.
\end{proof}

\begin{remark}
Theorem~\ref{thm:FDS} (or Theorem~\ref{thm:IDS}) overrides \cite{DOSZ2011}*{Theorem6.9}, which states that $\ell_2\oplus \ell_{2,1}$ can be renormed so that its canonical basis is isometrically greedy. This result from \cite{DOSZ2011} solved a question posed in \cite{AW2006} by providing an isometrically greedy basis that fails to be subsymmetric.
\end{remark}
\section{Renorming spaces with subsymmetric bases}\label{Sect:Subsym}\noindent
In this section we turn to the case of subsymmetric bases, where the interplay between symmetry and greedy-type properties allows for a renorming approach that differs in nature from the arguments used earlier in the paper. In contrast to the unconditional setting of Section~\ref{Sect:Renorm}, subsymmetry already provides a built-in structural rigidity, and this enables sharper control over the behavior of greedy and quasi-greedy bases. Our goal in this section is to show that, under mild regularity assumptions on the fundamental function, subsymmetric bases admit renormings that simultaneously enforce isometric subsymmetry and retain the greedy constants. This yields a clean counterpart to the results of the preceding sections and produces new examples of spaces with isometrically greedy bases.

To derive the subsymmetric counterpart to Lemma~\ref{lem:UncRen}, we need to tailor a renorming different from the one used in \cite{Ansorena2018}.

\begin{lemma}\label{lem:RNSub}
Let $\XB$ be a $C$-greedy basis of a Banach space $\XX$, $1\le C<\infty$. Assume that $\XB$ is subsymmetric. Then there is a renorming of $\XX$ so that $\XB$ $C$-greedy relative and isometrically subsymmetric relative to the renorming. Moreover, if $\psi$ and $\psi^*$ are respectively the fundamental functions of $\XB$ and $\XB^*$ relative to the renorming, then $\psi\le\usdf[\XB,\XX]$ and $\psi^*\le \usdf[\XB^*,\XX^*]$.
\end{lemma}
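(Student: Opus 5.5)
The plan is to define the new norm as a supremum over all ``spreadings'' of a vector, combined with changes of sign, in such a way that the resulting set is invariant under every operator $M_\lambda\circ L_\beta$. The greedy inequality should then transfer from the original norm by lifting greedy sets along each spreading.

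\textbf{Construction.} For $f\in\XX$ I let $\Phi(f)$ be the set of all $h\in\XX$ for which there are $g\in\XX$ and increasing maps $\beta,\gamma\colon\NN\to\NN$ with $f=L_\gamma(g)$ and $h=L_\beta(g)$. In words, $h$ carries the same coefficients as $f$, in the same order, placed on a possibly different increasing set of positions. Then I set
\[
\Nnorm{f}=\sup\enbrace{\norm{M_\lambda(h)}\colon h\in\Phi(f),\ \lambda\in\EE^\NN}.
\]
Taking $h=f$ and $\lambda\equiv 1$ gives $\norm{f}\le\Nnorm{f}$. If $\XB$ is $K$-subsymmetric, then $\norm{M_\lambda L_\beta (g)}\le K\norm{g}\le K^2\norm{L_\gamma g}=K^2\norm f$. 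Hence $\Nnorm{\cdot}$ is a norm equivalent to $\norm\cdot$; convexity is checked coordinatewise, since $\Phi$ acts linearly on a common pattern of positions.

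\textbf{Isometric subsymmetry.} Directly from the definitions, $\Phi(L_\beta f)=\Phi(f)$, and $\Phi(M_\mu f)=M_\mu$ applied to $\Phi(f)$ up to relabeling $\mu$, which is absorbed by the supremum over $\lambda$. Therefore $\Nnorm{M_\mu L_\beta f}=\Nnorm{f}$ for all $\mu\in\EE^\NN$ and all increasing $\beta$, which is isometric subsymmetry.

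\textbf{Fundamental functions.} For every $h\in\Phi(\Ind_{\varepsilon,A})$ and every $\lambda\in\EE^\NN$, the vector $M_\lambda h$ is of the form $\Ind_{\delta,A'}$ with $\abs{A'}=\abs A$. This gives $\psi\le\usdf[\XB,\XX]$. Since $\Nnorm\cdot\ge\norm\cdot$, dual norms can only decrease, and so $\psi^*\le\usdf[\XB^*,\XX^*]$.

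\textbf{$C$-greediness.} This is the main step. Fix $f$, $A\in\Gt(f)$, $B$ with $\abs B\le\abs A$, and scalars $(a_n)_{n\in B}$. By density and continuity I would first reduce to $f$ of finite support. Take $h=L_\beta(g)\in\Phi(f-S_A f)$, where $f-S_Af=L_\gamma g$, and fix $\lambda$. The key point is to extend the order-preserving relabeling
\[
\supp(f-S_Af)\to\supp(h)
\]
to an order-preserving injection of $\supp(f)\cup B$ into $\NN$. This need not be possible directly, because there may be no room between consecutive image indices. To fix this, I first spread $h$ further, say by doubling indices; this does not change $\norm{M_\lambda h}$ up to replacing $h$ by another element of $\Phi$. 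Concretely, I replace $h$ by the spreading $h'=L_{2\cdot}(h)\in\Phi(f-S_Af)$. Using $\Nnorm{h'}=\Nnorm{h}$ and the fact that the supremum defining $\Nnorm{f-S_Af}$ can be taken over such gapped $h$, one may assume there is enough room. With the extension in hand, I obtain $F\in\Phi(f)$ and images $A'$, $B'$ of $A$, $B$ such that:
\begin{itemize}
\item $h=F-S_{A'}F$;
\item $A'\in\Gt(F)$ and $\abs{B'}\le\abs{A'}$;
\item $F-\sum_{B'}a_n\xx_{n'}$ is the corresponding spreading of $f-\sum_B a_n\xx_n$.
\end{itemize}
Since $M_\lambda$ preserves greedy sets, the $C$-greediness of $\XB$ in the original norm yields
\[
\norm{M_\lambda h}\le C\norm{M_\lambda\enpar{F-\textstyle\sum_{B'}a_n\xx_{n'}}}\le C\Nnorm{f-\textstyle\sum_Ba_n\xx_n}.
\]
Taking the supremum over $h$ and $\lambda$ gives $\Nnorm{f-S_Af}\le C\Nnorm{f-\sum_B a_n\xx_n}$. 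I expect this gap bookkeeping to be the only delicate point of the proof.
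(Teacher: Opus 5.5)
The gap is at the very first step: the functional $\Nnorm{\cdot}$ you define is not subadditive, so it is not a norm. Because $\Phi(f)$ contains compressions of $f$ (take $\beta=\mathrm{id}$), $\Phi(f_1+f_2)$ may contain vectors obtained by squeezing out positions where the coefficients of $f_1$ and $f_2$ cancel. Such vectors do not split as $h_1+h_2$ with $h_i\in\Phi(f_i)$ on a common pattern of positions. Your coordinatewise check silently assumes a common order-preserving relabeling of $\supp(f_1)\cup\supp(f_2)$, which need not exist.

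Here is a concrete counterexample. Give $\ell_2$ the equivalent norm $\norm{f}=\max\{\norm{f}_2,\tfrac{9}{10}\sup_n(\abs{a_n}+\abs{a_{n+1}})\}$, where $f=\sum_n a_n e_n$. The unit vector basis is then lattice $1$-unconditional and symmetric, hence subsymmetric and greedy. For $0<\delta<1/10$ put $f_1=e_1+\delta e_2+e_3$ and $f_2=-\delta e_2$.
\begin{itemize}
\item Since $e_1+e_2\in\Phi(e_1+e_3)$, we have $\Nnorm{f_1+f_2}\ge\norm{e_1+e_2}=9/5$.
\item Every element of $\Phi(f_1)$ has the form $e_n+\delta e_k+e_m$ with $n<k<m$, so $\Nnorm{f_1}+\Nnorm{f_2}=\sqrt{2+\delta^2}+\delta<9/5$.
\end{itemize}
The same example shows that $\Nnorm{\cdot}$ is not even continuous: $\Nnorm{e_1+se_2+e_3}\to\sqrt2$ as $s\to0$, whereas $\Nnorm{e_1+e_3}=9/5$. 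This also invalidates your density reduction.

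Your gap bookkeeping in the greedy step breaks down for the same reason. The quantity you must bound is the original norm $\norm{M_\lambda h}$ of a possibly compressed $h$. Since $\norm{\cdot}$ is not spreading invariant, the supremum defining $\Nnorm{f-S_A(f)}$ cannot be restricted to gapped $h$. For instance, take $f=e_1+2e_2+e_3$ and $A=\{2\}$: then $h=e_1+e_2$ gives $9/5$, while every gapped placement of $(\pm1,\pm1)$ gives $\sqrt2$. Moreover, $h=e_1+e_2$ cannot be lifted to any $F\in\Phi(f)$ with $h=F-S_{A'}(F)$, because there is no room for the removed coefficient.

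Simply discarding compressions, i.e.\ allowing only $h=L_\beta(f)$, restores subadditivity, since each $f\mapsto\norm{M_\lambda L_\beta(f)}$ is a seminorm. But then only $\Nnorm{L_\alpha(f)}\le\Nnorm{f}$ survives: in the example, spreadings of $e_1+e_3$ give $\sqrt2$ while those of $e_1+e_2$ give $9/5$.

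The paper resolves this tension as follows.
\begin{itemize}
\item It first makes $\XB$ lattice $1$-unconditional via Lemma~\ref{lem:UncRen}.
\item It then sets $\norm{f}_s=\lim_{k}\sup_{\beta\in\Bt(k)}\norm{L_\beta(f)}$, where $\Bt(k)$ is the set of spreadings all of whose gaps are at least $k$. This is a decreasing limit of seminorms, hence a norm, and it is equivalent to the original one.
\item Letting the gaps tend to infinity makes every order-preserving relabeling $\alpha$ of a finite support factor through spreadings with slightly smaller gaps, namely $\beta|_A=\gamma\circ\alpha$. This yields $\norm{L_\alpha(f)}_s=\norm{f}_s$ in both directions.
\item Since each $L_\beta$ acts on the whole vector, $\beta(A)\in\Gt(L_\beta(f))$, so the $C$-greedy inequality transfers verbatim with no bookkeeping at all.
\end{itemize}
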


\begin{proof}
By Lemma~\ref{lem:UncRen}, we can assume that $\XX$ is already endowed with a norm relative to which $\XB$ is lattice $1$-unconditional. Let for each $k\in\NN$, $\Bt(k)$ be the set of all sequences $\beta\colon\NN\to \NN$ such that
\[
\beta(n)-\beta(n-1)\ge k, \quad n\in\NN.
\]
Clearly, $\Bt(k)$ decreases as $k$ increases. Note that $\Bt(1)$ is the set of all increasing functions from $\NN$ to $\NN$. Given a function $\lambda$ defined on a subset of $\NN$ and $\beta\in\Bt(1)$, we define $\beta(\lambda)$ by $k\mapsto \lambda(n)$ if $k=\beta(n)$ for some $n\in A$, and $k\mapsto 0$ otherwise. Define
\[
\norm{f}_s=\inf_{k\in\NN} \sup_{f\in\Bt(k)} \norm{L_\beta(f)}=\lim_{k\in\NN} \sup_{f\in\Bt(k)} \norm{L_\beta(f)}, \quad f\in\XX.
\]
Since $\XB$ is $C$-subsymmetric for some $C\in[1,\infty)$, $\norm{\cdot}_s$ is a renorming of $\XX$. Since
\[
L_\beta \circ M_\lambda=M_{\beta(\lambda)} \circ L_\beta
\]
for all $\lambda\in\ell_\infty$ and $\beta\in\Bt(1)$, $\XB$ is lattice $1$-unconditional relative to $\norm{\cdot}_a$. Since
\[
L_\beta\enpar{\Ind_{\varepsilon,A}[\XB,\XX]}=\Ind_{\beta(\varepsilon),\beta(A)}[\XB,\XX]
\]
for all $A\in\Pt_{<\infty}(\NN)$, all $\varepsilon\in\EE^A$ and all $\beta\in\Bt(1)$, $\psi\le \usdf[\XB,\XX]$. Since
\[
\Ind_{\varepsilon,A}[\XB,\XX^*](f)=\Ind_{\beta(\varepsilon),\beta(A)}[\XB,\XX^*]\enpar{L_\beta(f)},
\]
for all $A\in\Pt_{<\infty}(\NN)$, all $\varepsilon\in\EE^A$, $\beta\in\Bt(1)$ and all $f\in\XX$ we have
\[
\abs{\Ind_{\varepsilon,A}[\XB^*,\XX^*](f)} \le \usdf[\XB^*,\XX^*](m) \norm{f}_a.
\]

Let $\beta\in\Bt(1)$, $f\in \XX$, $A\in\Gt(f)$, $B\in\Pt(\Nt)$ with $\abs{B}\le\abs{A}$ and $(a_n)_{n\in B}$ in $\FF$. Set $g=\sum_{n\in A} a_n \, \xx_n$. Since $\beta(A)\in \Gt(L_\beta(f))$,
\begin{multline*}
\norm{L_\beta\enpar{f-S_B(f)}}
=\norm{L_\beta(f)-S_{\beta(A)}(f)}\\
\le C \norm {L_\beta(f)-L_\beta(g)}
=C\norm{L_\beta(f-g)}.
\end{multline*}
Hence, $\XB$ is $C$-greedy relative to the renorming.

It remains to prove that $\norm{\cdot}_a$ is indeed a $1$-subsymmetric renorming. To that end, we will use the following fact that needs no further explanation.
\begin{claim}\label{claim:SS}
Given $A\in\Pt_{<\infty}(\NN)$, $\alpha\colon A\to \NN$ one-to-one and order-preserving and $j\in\NN$, there is $k\in\NN$ such that for any $\beta\in\Bt(k)$ there is $\gamma\in\Bt(j)$ such that $\restr{\beta}{A}=\gamma\circ \alpha$.
\end{claim}

We have to prove that for any $A\in\Pt_{<\infty}(\NN)$, any $\alpha\colon A \to \NN$ one-to-one and order-preserving and any $(a_n)_{n\in A}$ in $\FF$, $\norm{f}_s\le \norm{g}_s$, where
\[
f=\sum_{n\in A} a_n \, \xx_n, \quad g=\sum_{n\in A} a_n \, \xx_{\alpha(n)}.
\]
Let $(\beta_k)_{k=1}^\infty$ be such that $\beta_k\in\Bt(k)$ for all $k\in\NN$ and
\[
\norm{f}_a=\lim_{k\in \NN} \norm{L_{\beta_k}(f)}.
\]
Use Claim~\ref{claim:SS} to recursevely pick $(\gamma_j)_{j=1}^\infty$ and $(k_j)_{j=1}^\infty$ such that $\gamma_j\in\Bt(j)$, $k_j\in \NN\cap[1+k_{j-1},\infty)$ and $\restr{\beta_{k_j}}{A}=\gamma_j \circ \alpha$ for all $j\in\NN$. Since $L_{\beta_{k_j}}(f)=L_{\gamma_j}(g)$ for all $j\in\NN$,
\[
\norm{f}_a
=\lim_{j\in \NN} \norm{L_{\beta_{k_j}}(f)}
=\lim_{j\in \NN} \norm{L_{\gamma_j}(g)} \le \norm{g}_a.\qedhere
\]
\end{proof}

Recall that symmetric bases are subsymmetric and that, in turn, subsymmetric bases are greedy. Garling \cite{Garling1968} provided the first-known example of a space with a subsymmetric basis that fails to be symmetric. Dilworth et al.\ \cite{DOSZ2011} found an isometrically greedy subsymmetric basis that fails to be symmetric, solving this way a problem posed in \cite{AW2006}. Later on, the authors of \cite{AAW2018b} proved that any Garling space (modelled after the above-mentioned example by Garling) can be renormed so that its canonical basis is isometrically greedy. Our construction yields the following contributions to the theory.

\begin{theorem}\label{thm:SubSym}
Let $\XX$ be a Banach space with a subsymmetric basis $\XB$. If $\usdf[\XB,\XX]$ has the LRP, then $\XX$ can be renormed so that $\XB$ is isometrically greedy, isometrically bidemocratic, and isometrically subsymmetric. Moreover, there is a Dini regularization $\prim$ of $\usdf[\XB,\XX]$ such that $\prim$ and $\prim^*$ are nondecreasing. Given such a sequence $\prim$ we can choose the renorming with respect to which the fundamental function of $\XB$ is $\prim$.
\end{theorem}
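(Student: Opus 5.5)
The plan is to imitate the proof of Theorem~\ref{thm:greedy}, with Lemma~\ref{lem:RNSub} in place of Lemma~\ref{lem:UncRen}, and then check that the renorming of Theorem~\ref{thm:main} preserves isometric subsymmetry. Subsymmetric bases are greedy and bidemocratic, by \cite{LinTza1977}*{Proposition 3.a.6}. So Lemma~\ref{lem:RNSub} gives a renorming $\norm{\cdot}_1$ in which $\XB$ is lattice $1$-unconditional, isometrically subsymmetric and still greedy. In this norm $\psi\le\usdf[\XB,\XX]$ and $\psi^*\le\usdf[\XB^*,\XX^*]$, and since the new norm is equivalent, $\psi\approx\usdf[\XB,\XX]$. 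Hence $\XB$ remains bidemocratic, and the LRP passes to $\psi$ because the LRP is stable under equivalence. Lemma~\ref{lem:DNI} and Lemma~\ref{lem:LRPDini} then provide a Dini regularization $\prim$ with $\prim$ and $\prim^*$ nondecreasing. Any $\prim$ of this kind for $\usdf[\XB,\XX]$ also works for $\psi$.

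Next we apply the construction in the proof of Theorem~\ref{thm:main} to $(\XX,\norm{\cdot}_1)$ with this $\prim$. That produces $\Nnorm{\cdot}$, relative to which $\XB$ is isometrically bidemocratic, has Property~(A), and has fundamental function $\prim$. To invoke Theorem~\ref{thm:AW} we still need suppression $1$-unconditionality, and to finish we also need isometric subsymmetry. Lattice $1$-unconditionality is direct. $\SNA(M_\lambda f,A)\le\SNA(f,A)$ for $\abs{\lambda}\le 1$. For the dual term, $\SNB(M_\lambda f,f^*,E)=\SNB(f,M_\lambda^* f^*,E)$, and $M_\lambda^*$ acts coordinatewise on $\YY=[\XB^*]$ with norm at most $1$. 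The difficulty is that $B\in\Gt(f^*)$ need not be a greedy set of $M_\lambda^*f^*$. I would get around this by working with unimodular $\lambda$ first, since then greedy sets are preserved, and then passing to $\abs{\lambda}\le1$ by convexity.

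The same transfer device should handle subsymmetry. For an increasing $\beta$, the dual map $L_\beta^*$ sends $f^*\in B_\YY$ to $\sum_n f^*(\xx_{\beta(n)})\xx_n^*$. By isometric subsymmetry in $\norm{\cdot}_1$ this has norm at most $1$, and it carries the greedy set $B$ of $f^*$ to a greedy set $\beta^{-1}(B)$ of the image, up to the indices lying outside $\beta(\NN)$. Combined with the invariance of $\SNA$ and of cardinalities under $\beta$, this should give $\Nnorm{L_\beta f}\le\Nnorm{f}$. For the reverse inequality we use $f^*\mapsto$ the spread dual functional built from the left inverse of $L_\beta$, which also has norm at most $1$ in the isometric setting.

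The main obstacle is exactly this index mismatch: $L_\beta^*f^*$ can discard large coefficients of $f^*$ at indices outside $\beta(\NN)$. Then $\beta^{-1}(B)$ has fewer than $\abs{B}$ elements, and the constraint $\abs{A}\le\abs{B}$ in $\Omega$ can fail. My first attempt is to enlarge $B$ to a greedy set of $L_\beta^*f^*$ of the same size. This changes $\SNB$ only by terms supported off $\supp(L_\beta f)$, which vanish. If that fails, I will change the definition of $\Omega$ so that it requires only $B\supset$ some greedy set of size at least $\abs{A}$. Claim~\ref{clm:A} still goes through for this larger family, because the estimate there used only that $\abs{f^*(\xx_n)}\le C_e/\prim^*(\abs{A}+1)$ off $B$.
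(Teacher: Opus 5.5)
There is a genuine gap, at the step you yourself flag. Nothing in your proposal shows that the norm $\Nnorm{\cdot}$ of Theorem~\ref{thm:main}, built on an isometrically subsymmetric norm, satisfies $\Nnorm{L_\beta f}\le\Nnorm{f}$. Two parts of the proposal are sound: your unimodular-plus-convexity argument for lattice $1$-unconditionality, and the inequality $\Nnorm{f}\le\Nnorm{L_\beta f}$ via the left inverse of $L_\beta$.

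Your first fix rests on a false claim. Suppose $k$ is an index you add to $\beta^{-1}(B)$ to enlarge it to a greedy set of $g^*=L_\beta^*f^*$. The term dropped from $\SNB(f,g^*,\cdot)$ is $f^*(\xx_{\beta(k)})\,\xx_{\beta(k)}^*(L_\beta f)$ with $\beta(k)\in\beta(\NN)$. That is a coefficient \emph{on} the support of $L_\beta f$, not off it, and it is nonzero in general.

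Your second fix does not address the cardinality deficit. Take the bad configuration $B\cap\beta(\NN)=\emptyset$, $A\subset\beta(\NN)$, so that $\beta^{-1}(B)=\emptyset$. Keeping the same index set in $\SNB$ forces $B'\subset\beta^{-1}(A)$. You would then need $\beta^{-1}(A)$ itself to be a greedy set of $g^*$, which it need not be.

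Repairing this would require a functional of norm at most one with large coefficients on $\beta^{-1}(A)$. That means transporting the large coefficients of $f^*$ from $B\setminus\beta(\NN)$ to the positions of $A$. This is a non-order-preserving rearrangement, which a merely subsymmetric norm does not control. So there is no visible reason for the renorming of Theorem~\ref{thm:main} to preserve subsymmetry.

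The missing idea is to apply the subsymmetrization \emph{last}. This is what the inequalities $\psi\le\usdf[\XB,\XX]$ and $\psi^*\le\usdf[\XB^*,\XX^*]$ in Lemma~\ref{lem:RNSub} are designed for, and it is how the paper's one-line combination works. Lemma~\ref{lem:RNSub} keeps the greedy constant exactly, for every $C\ge 1$ including $C=1$, and does not increase the fundamental functions of $\XB$ or $\XB^*$.

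So proceed as follows.
\begin{enumerate}
\item Use Lemma~\ref{lem:UncRen}, Theorem~\ref{thm:main} and Theorem~\ref{thm:AW} to reach a norm in which $\XB$ is $1$-greedy, with fundamental function $\prim$ and dual fundamental function at most $\prim^*$. This is where your lattice-unconditionality argument is genuinely needed.
\item Apply Lemma~\ref{lem:RNSub} with $C=1$. In the resulting norm $\XB$ is $1$-greedy and isometrically subsymmetric, with $\psi\le\prim$ and $\psi^*\le\prim^*$.
\item By \eqref{eq:SemiBi},
\[
m\le \psi(m)\,\psi^*(m)\le \prim(m)\,\prim^*(m)=m,\quad m\in\NN,
\]
which forces $\psi=\prim$ and isometric bidemocracy.
\end{enumerate}
No invariance of $\Nnorm{\cdot}$ under the maps $L_\beta$ is ever required.
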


\begin{proof}
Just combine Theorem~\ref{thm:main}, Lemma~\ref{lem:RNSub}, and Theorem~\ref{thm:AW}.
\end{proof}

\begin{theorem}\label{thm:CTSS}
Let $\XB$ be a subsymmetric basis of a Banach space $\XX$ with nontrivial cotype. Then $\XX$ can be renormed so that $\XB$ is isometrically greedy, isometrically bidemocratic, and isometrically subsymmetric. Besides, there is a Dini regularization $\prim$ of $\usdf[\XB,\XX]$ such that $\prim$ and $\prim^*$ are nondecreasing. Given such a sequence $\prim$ we can choose the renorming so that the fundamental function of $\XB$ relative to it is $\prim$.
\end{theorem}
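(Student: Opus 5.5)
The plan is to reduce Theorem~\ref{thm:CTSS} to Theorem~\ref{thm:SubSym}, in the same way the analogous cotype statement in Section~\ref{Sect:Renorm} was reduced to Theorem~\ref{thm:greedy}. Theorem~\ref{thm:SubSym} needs only one hypothesis beyond subsymmetry, namely that $\usdf[\XB,\XX]$ has the LRP. Once that is known, every conclusion follows verbatim: the isometrically greedy, isometrically bidemocratic and isometrically subsymmetric renorming, the existence of a Dini regularization $\prim$ with $\prim$ and $\prim^*$ nondecreasing, and the prescription of the fundamental function.

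To get the LRP, first recall that subsymmetric bases are unconditional and democratic, hence greedy, and in particular quasi-greedy. They are also bidemocratic by \cite{LinTza1977}*{Proposition 3.a.6}. Since $\XX$ has nontrivial cotype, Lemma~\ref{lem:CTLRP} then tells us that $\lsdf[\XB,\XX]$ has the LRP.

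The only point requiring care is passing from the lower superdemocracy function to the fundamental function $\usdf[\XB,\XX]$. A subsymmetric basis is unconditional and democratic, hence superdemocratic, so there is a constant $C$ with
\[
\lsdf[\XB,\XX](m)\le \usdf[\XB,\XX](m)\le C\,\lsdf[\XB,\XX](m), \quad m\in\NN.
\]
Thus the two functions are equivalent, and the LRP passes to equivalent sequences. Concretely, if $\lsdf(rm)\ge 2\lsdf(m)$, then iterating $j$ times with $2^j\ge 2C$ gives
\[
\usdf(r^j m)\ge \lsdf(r^j m)\ge 2^j\lsdf(m)\ge 2\usdf(m).
\]
So $\usdf[\XB,\XX]$ has the LRP with constant $r^j$.

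Applying Theorem~\ref{thm:SubSym} then completes the argument. No step is a real obstacle. The only subtlety is remembering that Lemma~\ref{lem:CTLRP} concerns $\lsdf$ rather than $\usdf$, and that this gap is bridged by superdemocracy of subsymmetric bases.
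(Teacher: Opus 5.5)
Your proposal is correct and matches the paper's proof, which simply combines Lemma~\ref{lem:CTLRP} with Theorem~\ref{thm:SubSym}. Your explicit transfer of the LRP from $\lsdf[\XB,\XX]$ to $\usdf[\XB,\XX]$, using superdemocracy of subsymmetric bases, fills in a step the paper leaves implicit.
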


\begin{proof}
Just combine Lemma~\ref{lem:CTLRP} with Theorem~\ref{thm:SubSym}.
\end{proof}

\begin{corollary}
Let $\XB$ be a spreading model of a weakly null sequence in a Banach space $\YY$ with nontrivial cotype. Then the Banach space $\XX$ spanned by $\XB$ can be renormed so that $\XB$ is isometrically greedy, isometrically bidemocratic, and isometrically subsymmetric. Besides, there is a Dini regularization $\prim$ of $\usdf[\XB,\XX]$ such that $\prim$ and $\prim^*$ are nondecreasing. Given such a sequence $\prim$ we can choose the renorming so that the fundamental function of $\XB$ relative to it is $\prim$.
\end{corollary}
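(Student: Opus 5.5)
The plan is to reduce the corollary to Theorem~\ref{thm:CTSS}. For that we need two things: that $\XB$ is a subsymmetric basis of $\XX$, and that $\XX$ has nontrivial cotype. Once both are in hand, Theorem~\ref{thm:CTSS} gives every conclusion verbatim. This includes the existence of the Dini regularization $\prim$ with $\prim$ and $\prim^*$ nondecreasing, and the option of prescribing $\prim$ as the new fundamental function.

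\emph{Subsymmetry.} By the Brunel--Sucheston construction, a spreading model $\XB=(\xx_n)_{n=1}^\infty$ of a normalized sequence $(y_n)$ is $1$-spreading. That is, for $n_1<\dots<n_k$ and scalars $(a_j)$,
\[
\norm{\sum_j a_j \xx_{n_j}}=\lim_{i_1<\dots<i_k,\ i_1\to\infty}\norm{\sum_j a_j y_{i_j}}
\]
depends only on $(a_j)$. When $(y_n)$ is weakly null and seminormalized, the spreading model is moreover suppression $1$-unconditional; this is the standard fact that one can pass to almost disjoint block-like behaviour via Mazur-type arguments (see, e.g., Beauzamy--Lapresté). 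If $\XB$ is equivalent to the unit vector basis of $\ell_1$, or is otherwise degenerate, we invoke the weakly null hypothesis to exclude the degenerate case. Then $\XB$ is an unconditional spreading basis of $\XX=[\XB]$. Hence it is equivalent to all its subsequences, so it is subsymmetric.

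\emph{Cotype.} Nontrivial cotype of $\YY$ transfers to $\XX$ because $\XX$ is finitely representable in $\YY$. Each finite combination $\sum_j a_j \xx_j$ has norm approximated, uniformly on finite-dimensional pieces, by $\norm{\sum_j a_j y_{i_j}}$ for suitable $i_1<\dots<i_k$. More precisely, for each $k$ and $\epsilon>0$, the span $[\xx_1,\dots,\xx_k]$ is $(1+\epsilon)$-isomorphic to $[y_{i_1},\dots,y_{i_k}]\subset\YY$ for $i_1$ large. This follows by compactness of the unit sphere of $\FF^k$ and the defining limit. The cotype $q$ inequality is tested on finitely many vectors lying in some $[\xx_1,\dots,\xx_k]$, so it passes to $\XX$ with constant at most $(1+\epsilon)$ times that of $\YY$. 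Thus $\XX$ has nontrivial cotype.

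The main obstacle is the first step: making sure that the hypotheses really yield an honest unconditional, hence subsymmetric, basis rather than merely a spreading sequence. I would cite the known result that spreading models generated by weakly null sequences are suppression $1$-unconditional. With that established, the finite-representability argument is routine, and Theorem~\ref{thm:CTSS} finishes the proof.
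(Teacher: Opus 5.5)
Your proposal is correct and follows the paper's proof: the paper likewise notes that $\XB$ is subsymmetric and that $\XX$ inherits nontrivial cotype from $\YY$ (which your finite-representability argument justifies), and then applies Theorem~\ref{thm:CTSS}. One minor inaccuracy: an $\ell_1$ spreading model is not a degenerate case that the weakly null hypothesis rules out (the unit vector basis of the Figiel--Johnson Tsirelson space is weakly null with $\ell_1$ spreading model), and it needs no exclusion because the $\ell_1$ basis is subsymmetric; what weak nullness plus seminormalization actually buys you is nondegeneracy, via a basic subsequence, together with suppression $1$-unconditionality.
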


\begin{proof}
$\XB$ is a subsymmetric basis, and $\XX$ inherits the Rademacher cotype from $\YY$. Hence, Theorem~\ref{thm:CTSS} applies.
\end{proof}

Although Theorem~\ref{thm:CTSS} does not solve \cite{AAT2025}*{Problem 2.1}, it notably advances the knowledge of the behavior of subsymmetric bases with respect to the TGA. For instance, it yields the following.

\begin{theorem}
The Schlumprecht space $\Sym$ can be renormed so that its canonical basis is isometrically greedy, isometrically subsymmetric, and isometrically bidemocratic.
\end{theorem}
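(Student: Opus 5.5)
The plan is to derive the statement from Theorem~\ref{thm:CTSS}. That theorem already gives isometric greediness, isometric bidemocracy and isometric subsymmetry for any subsymmetric basis of a space with nontrivial cotype. So two facts about Schlumprecht's space $\Sym$ and its canonical basis $\XB=(\xx_n)_{n=1}^\infty$ must be checked: $\XB$ is a subsymmetric basis, and $\Sym$ has nontrivial cotype.

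The first fact is part of the standard theory of $\Sym$. Schlumprecht's norm is defined by an implicit equation
\[
\norm{f}=\max\enbrace{\norm{f}_\infty, \sup_{\ell\ge 2}\frac{1}{\log_2(\ell+1)}\sup \sum_{j=1}^\ell \norm{E_j f}},
\]
where the inner supremum runs over successive intervals $E_1<\dots<E_\ell$. This norm is invariant under changes of sign of the coordinates and under spreading, since spreading maps successive intervals to successive intervals. Hence $\XB$ is $1$-unconditional and $1$-subsymmetric; in fact it is already isometrically subsymmetric in its original norm.

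The second fact is the main point. I would use a known result about $\Sym$ rather than reprove it. $\Sym$ is reflexive, and its fundamental function is $\usdf(m)\approx m/\log_2(1+m)$. For the conclusion itself we only need the LRP of $\usdf[\XB,\Sym]$, because Theorem~\ref{thm:SubSym} requires only subsymmetry plus the LRP. So the cleanest route is to bypass cotype altogether: invoke Theorem~\ref{thm:SubSym} directly and check that $\primt(m)=m/\log_2(1+m)$ has the LRP. Indeed, with $r=4$,
\[
\primt(4m)=\frac{4m}{\log_2(1+4m)}\ge\frac{4m}{2+\log_2(1+m)}\ge 2\,\frac{m}{\log_2(1+m)}
\]
whenever $\log_2(1+m)\ge 2$. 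For small $m$ one adjusts $r$ to a larger power of $2$, and the LRP passes to equivalent sequences. The remaining ingredient, the formula $\usdf(m)\approx m/\log_2(1+m)$, is standard. The upper bound comes from taking $\ell=m$ singletons in the definition of the norm; the lower bound comes from Schlumprecht's estimate on the norm of averages of the basis.

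Thus the proof reduces to citing the fundamental function of $\Sym$ and verifying the LRP, after which Theorem~\ref{thm:SubSym} gives the claim. The main obstacle is the lower estimate $\norm{\sum_{n\in A}\xx_n}\gtrsim \abs{A}/\log_2(1+\abs{A})$. If I had to prove it rather than cite it, I would first bound $\norm{\sum_{n\in A}\xx_n}$ below by the single term $\ell=\abs{A}$ in the supremum defining the norm, and then handle sign-changed vectors via $1$-unconditionality.
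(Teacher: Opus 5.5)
Your final route is the paper's. The canonical basis of $\Sym$ is subsymmetric, and its fundamental function is equivalent to $m/f(m)$. The paper cites Stankov's Fact~1 for this and phrases the LRP of this sequence as the URP of $f$. Theorem~\ref{thm:SubSym} then finishes. Your direct check of the LRP for $m/\log_2(1+m)$ is fine; e.g.\ $r=8$ works for every $m$, since $\log_2(1+8m)\le 3+\log_2(1+m)$. Dropping the cotype detour is also right. The paper uses only the LRP, and as far as I know $\Sym$ contains the $\ell_\infty^n$ uniformly, so Theorem~\ref{thm:CTSS} would not apply.

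What is wrong is your account of the ingredient you cite: you have the two directions reversed. The norm of $\Sym$ is a supremum, so choosing $\ell=\abs{A}$ singletons gives the \emph{lower} bound $\norm{\sum_{n\in A}\xx_n}\ge \abs{A}/\log_2(1+\abs{A})$. This is immediate, not the main obstacle, and your last paragraph derives it exactly this way, contradicting your earlier sentence.

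The substantive direction is the upper bound $\norm{\sum_{n\in A}\xx_n}\le \abs{A}/\log_2(1+\abs{A})$. This is Schlumprecht's lemma, proved by induction on $\abs{A}$ from the implicit equation. If $E_1<\dots<E_\ell$ meet the support in $m_1,\dots,m_\ell\ge 1$ points with $\sum_j m_j=m$, then concavity of $x\mapsto x/f(x)$ gives $\sum_j m_j/f(m_j)\le m/f(m/\ell)$. Submultiplicativity then gives $f(\ell)f(m/\ell)\ge f(m)$.

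This is exactly the direction the LRP needs. The lower bound together with the trivial estimate $\usdf(m)\le m$ only yields $\usdf(rm)/\usdf(m)\ge r/\log_2(1+rm)$, which tends to $0$ as $m\to\infty$. So the step your sketch presents as free is the one carrying all the weight. As long as you cite it, as the paper does, your proof is complete.
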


\begin{proof}
This important space which Schlumprecht invented to solve the distorsion problem (see \cite{Schlumprecht1991}), is built from a function $f\colon[1,\infty)\to[1,\infty)$ such that
\begin{itemize}
\item $f(1)=1$ and $f(x)<x$ for for $x\in(1,\infty)$,
\item $\lim_{x\to\infty} f(x)=\infty$ and $\lim_{x\to\infty} x^{-q} f(x)=0$ for all $q>0$,
\item $f(xy)\le f(x) f(y)$ for all $x$, $y\in[1,\infty)$, and
\item the dual function $x\mapsto x/f(x)$ is concave.
\end{itemize}
These properties imply that $\prim=\restr{f}{\NN}$ has the URP. The canonical basis $\St$ of $\Sym$ is subsymmetric, and its fundamental function is equivalent to $\prim^*$ (see \cite{Stankov2025}*{Fact 1}). By Theorem~\ref{thm:main}, there is a renorming of $\Sym$ such that $\St$ has the desired properties.
\end{proof}
\section{Renorming Banach spaces with conditional bases}\label{Sect:Condit}\noindent
Although separable Banach spaces without a Schauder basis do exist \cite{Enflo1973}, most Banach spaces encountered in analysis are naturally endowed with a Schauder basis. In some instances, this canonical basis is conditional, prompting the question of whether the space admits an unconditional basis. In others, the space is equipped with an unconditional basis, and one may instead ask whether a conditional Schauder basis can be constructed.

As regards the former problem, it is well known that neither $\Ct[0,1]$, nor $L_1$ \cite{LinPel1968}, nor the James space \cite{James1951} admits an unconditional basis. The latter problem traces back to the pioneering work of Babenko \cite{Babenko1948}, who constructed conditional bases for the separable infinite-dimensional Hilbert space. This line of research was later completed by Pe{\l}czy\'{n}ski and Singer \cite{PelSin1964}, who proved that every Banach space with a Schauder basis necessarily admits a conditional one.

More recently, motivated by the role played by unconditionality parameters in the performance of the Thresholding Greedy Algorithm (see, for instance, \cite{AAB2021}), the focus has shifted from the mere existence of conditional bases to the construction of bases whose unconditionality parameters exhibit prescribed growth. Building on Babenko's ideas, Garrig\'os and Wojtaszczyk \cite{GW2014} showed that $\ell_2$ admits, for each $0<\alpha<1$, a Schauder basis $\XB_\alpha$ satisfying
\[
\unc_m[\XB_\alpha,\ell_2]\gtrsim m^\alpha,
\qquad m\in\NN.
\]
They subsequently combined this construction with additional techniques to prove that $\XX=\ell_p$ and $\XX=L_p$, $1<p<\infty$, admit, for every $0<\alpha<1$, almost greedy bases $\YB_\alpha$ such
that
\[
\unc_m[\YB_\alpha,\XX]\gtrsim \log^\alpha(1+m),
\qquad m\in\NN.
\]
Further examples of highly conditional almost greedy bases for a broad class of Banach spaces were later obtained in \cite{AADK2019b}, using methods originating in \cite{DKK2003}.

The existence of a conditional basis satisfying Property~(A) remained open for many years (see \cite{AAT2025}*{Problem~6}). Although this question was recently answered in the affirmative in \cite{AABB2025}, one might still expect such examples to be rather scarce. As we show below, this intuition turns out to be misleading. Indeed, the renorming scheme developed in Section~\ref{sect:construction} allows us to link the existence of conditional bases with Property~(A) to the existence of almost greedy bases with prescribed unconditionality growth. In particular, we provide a mild condition ensuring that every Banach space with nontrivial type admits an \emph{isometrically almost greedy conditional basis} whose unconditionality parameters grow at a prescribed rate.

\begin{lemma}\label{lem:Doubling}
Let $f\colon\NN\to(0,\infty)$ be a nondecreasing unbounded sequence. Then there is a nondecreasing unbounded sequence $g\colon\NN\to(0,\infty)$ such that $g\le f$ and $g(2m)\lesssim g(m)$ for $m\in\NN$.
\end{lemma}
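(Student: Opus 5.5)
The plan is to build $g$ explicitly as a slowly growing minorant of $f$. Take $g$ to be a "logarithmic-scale" regularization: $g$ is constant on dyadic blocks $[2^j,2^{j+1})$, and its values on consecutive blocks increase by at most one unit. Then $g(2m)\le g(m)+1$, and once $g\ge 1$ this gives $g(2m)\le 2g(m)$.

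Concretely, I would first replace $f$ by $f_0=\min\{f,\,\cdot\,\}$-type truncations so that we may assume $f\ge$ some positive constant. Put $c=f(1)>0$; since $f$ is nondecreasing, $f\ge c$ everywhere. Next, for $j\ge 0$ define
\[
h(j)=\min\enbrace{ f(2^j),\; c + j}, \qquad g(m)=\min_{i\ge j} h(i)\ \text{ for } 2^j\le m<2^{j+1}.
\]
Since $f$ is nondecreasing, $h$ is nondecreasing, so the inner minimum equals $h(j)$ and $g(m)=h(j)$.

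Now I check the required properties. First, $g(m)\le f(2^j)\le f(m)$ for $2^j\le m<2^{j+1}$. Second, $g$ is nondecreasing because $h$ is. Third, $g$ is unbounded because both $f(2^j)\to\infty$ and $c+j\to\infty$. Fourth, $g$ is positive since $h(j)\ge c>0$.

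For the doubling property, note that if $2^j\le m<2^{j+1}$ then $2^{j+1}\le 2m<2^{j+2}$. Hence
\[
g(2m)=h(j+1)\le \min\{f(2^{j+1}),\,c+j+1\}.
\]
If $h(j)=c+j$, then $g(2m)\le c+j+1\le (1+1/c)\,h(j)$. If instead $h(j)=f(2^j)$, then $f(2^j)\le c+j$ and $g(2m)\le c+j+1$, which does not immediately compare with $f(2^j)$. This case is the main obstacle: a large jump of $f$ between $2^j$ and $2^{j+1}$ is harmless, but a small value $f(2^j)$ relative to $c+j$ is not.

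To fix this, I replace the recursion: set $h(0)=c$ and $h(j+1)=\min\{f(2^{j+1}),\,h(j)+c\}$. By induction $h(j)\le f(2^j)$, and $h$ is nondecreasing because $f(2^{j+1})\ge f(2^j)\ge h(j)$. Moreover $h(j+1)\le h(j)+c\le 2h(j)$, since $h(j)\ge c$ (this follows inductively from $f\ge c$). That gives $g(2m)\le 2g(m)$.

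For unboundedness, suppose $h$ were bounded by $M$. Choose $j$ with $f(2^{j})>M+c$. Then $h$ stops increasing by $c$ only finitely often before exceeding $M$ unless the minimum is attained by $f$, and here $f(2^{i})>M$ for $i\ge j$. So for $i\ge j$ each step either adds $c$ or jumps to a value greater than $M$. Either way $h$ eventually exceeds $M$, a contradiction.
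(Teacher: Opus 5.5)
Your final construction is correct and is essentially the paper's argument. The paper also builds $g$ as a recursively capped minorant, $g(m+1)=\min\{f(m+1),C_m g(m)\}$, with factors $C_m>1$ chosen so that $\prod_m C_m=\infty$ while $\prod_{m=k}^{2k-1}C_m$ stays bounded, and it proves unboundedness by the same contradiction (once $f$ exceeds the putative bound, the cap is always active). Your dyadic-block version with the additive cap $h(j+1)=\min\{f(2^{j+1}),h(j)+c\}$ avoids choosing $(C_m)$ and gives doubling constant $2$ directly. In a write-up, drop the abandoned first attempt and the truncation remark, since $f\ge f(1)>0$ automatically.
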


\begin{proof}
Pick $(C_m)_{m=1}^\infty$ in $(1,\infty)$ such that $\prod_{m=1}^\infty C_m=\infty$,
\[
D:=\sup_{k\in\NN} \prod_{m=k}^{2k-1} C_m<\infty
\]
and $C=\sup_m C_m<\infty$. We recursively define $g(1)=f(1)$ and
\[
g(m+1)=\min\enbrace{f(m+1),C_m g(m)}, \quad m\in\NN.
\]
By construction, $g\le f$. Hence, given $m\in\NN$,
\[
g(m+1)\ge \min\enbrace{f(m),C_m g(m)}\ge \min\enbrace{g(m),C_m g(m)}=g(m).
\]
Assume by contradiction that $G=\sup_m g(m)<\infty$. Pick $m_0\in\NN$ such that $f(m_0+1)\ge C G$. Then,
\[
g(m+1)\ge\min\enbrace{CG,C_m g(m)}=C_m g(m), \quad m\ge m_0.
\]
Therefore, $G=g(m_0)\prod_{m=m_0}^\infty C_m=\infty$. Finally, given $k\in\NN$,
\[
g(2k)\le g(k) \prod_{m=k}^{2k-1} C_m\le D g(k).\qedhere
\]
\end{proof}

\begin{theorem}
Let $\XX$ be a Banach space with a Schauder basis. Assume that $\XX$ has nontrivial type and contains a complemented subspace with a subsymmetric basis $\St$. Then $\XX$ can be renormed so that it has an isometrically almost greedy conditional basis $\XB$ with $\usdf[\XB,\XX]=\usdf[\St,\XX]$. Moreover, if $\St$ is equivalent to the standard $\ell_p$-basis for some $1<p<\infty$, given $0<\alpha<1$ we can get
\begin{equation}\label{eq:LCC}
\unc[\XB,\XX](m) \gtrsim \log^\alpha(1+m), \quad m\in\NN.
\end{equation}
\end{theorem}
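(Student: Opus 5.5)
The strategy is to build a conditional almost greedy basis in $\XX$ with controlled fundamental function, and then apply Theorem~\ref{thm:main} to make it isometrically almost greedy. By Theorem~\ref{thm:1AG}, Property~(A) is exactly isometric almost greediness, so it suffices to produce a quasi-greedy, bidemocratic, conditional basis $\XB$ of $\XX$ whose fundamental function is equivalent to $\usdf[\St,\XX]$ and has the LRP.

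\emph{Step 1: the target fundamental function.} Write $\XX\simeq \YY\oplus\Sym_0$, where $\Sym_0=[\St]$. The space $\YY$ has a Schauder basis, being complemented in a space with a basis; if needed one uses Pełczyński decomposition to pass to a space with a basis, and I expect this to be routine. Since $\XX$ has nontrivial type, so does $\Sym_0$, and hence it also has nontrivial cotype. By Lemma~\ref{lem:TURP} and Lemma~\ref{lem:CTLRP}, $\prim:=\usdf[\St,\XX]$ has both the URP and the LRP. Being subsymmetric, $\St$ is bidemocratic.

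\emph{Step 2: constructing $\XB$.} I would use the Dilworth--Kalton--Kutzarova method from \cite{DKK2003}, in the form developed in \cite{AADK2019b}. Given a space $\YY\oplus\Sym_0$ with a Schauder basis in $\YY$ and a subsymmetric basis $\St$ of $\Sym_0$ whose fundamental function has the URP, this method produces an almost greedy basis $\XB$ of $\YY\oplus\Sym_0$ with fundamental function equivalent to $\prim$. The basis $\XB$ is conditional whenever $\YY$ is nontrivial, and otherwise one takes a conditional perturbation. By Lemma~\ref{lem:URPBD}, the URP makes $\XB$ bidemocratic. In the $\ell_p$ case, to get \eqref{eq:LCC} I would instead invoke the Garrigós--Wojtaszczyk type construction \cite{GW2014}. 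This combines a Babenko-type basis with prescribed conditionality in $\ell_2$, or a suitable variant, with the DKK glueing, so that $\unc_m\gtrsim\log^\alpha(1+m)$ while keeping almost greediness and fundamental function $\approx m^{1/p}$. Lemma~\ref{lem:Doubling} is there to regularize the prescribed growth rate so that it is doubling, which the glueing argument needs.

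\emph{Step 3: renorming.} Apply Theorem~\ref{thm:main} to $\XB$, which is quasi-greedy and bidemocratic with fundamental function having the LRP. This gives a renorming in which $\XB$ has Property~(A) and fundamental function equal to a Dini regularization of $\prim$. Theorem~\ref{thm:1AG} then yields isometric almost greediness. To get exact equality $\usdf[\XB,\XX]=\usdf[\St,\XX]$ rather than mere equivalence, I would first renorm $\Sym_0$ using Theorem~\ref{thm:SubSym}, so that $\usdf[\St]$ is itself the Dini-regular sequence $\prim$. Then I would choose that same $\prim$ in Theorem~\ref{thm:main}, extending the norm on $\Sym_0$ compatibly. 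Finally, conditionality and the lower bound \eqref{eq:LCC} survive the renorming, since they are isomorphic invariants up to constants.

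The main obstacle I expect is Step 2: verifying that the DKK-type construction indeed yields bidemocracy and fundamental function equivalent to $\prim$ in this generality, and that it meets the prescribed conditionality in the $\ell_p$ case.
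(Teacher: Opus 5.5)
Your overall architecture matches the paper's. You obtain an almost greedy basis $\XB$ with $\usdf[\XB,\XX]\approx\usdf[\St,\XX]$ from the Dilworth--Kalton--Kutzarova method as developed in \cite{AADK2019b}. The URP then comes from type (Lemma~\ref{lem:TURP}) and gives bidemocracy (Lemma~\ref{lem:URPBD}), the LRP comes from cotype (Lemma~\ref{lem:CTLRP}), and you finish with Theorem~\ref{thm:main} and Theorem~\ref{thm:1AG}.

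\textbf{The gap: conditionality in the general case.} Your claim that the glued basis is conditional as soon as $\YY\neq\{0\}$ is unjustified. The gluing does not create conditionality. The output inherits whatever conditionality the input Schauder basis has; the lower estimates for $\unc[\XB,\XX]$ in \cite{AADK2019b} come from the input basis. A ``conditional perturbation'' cannot fix this, since a small perturbation of a basis is equivalent to it.

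The missing ingredient is the Pe{\l}czy\'{n}ski--Singer theorem \cite{PelSin1964}: $\XX$ has a conditional Schauder basis, and that basis is the input to \cite{AADK2019b}*{Theorem 4.1}. Lemma~\ref{lem:Doubling} is used together with that theorem in the general case, not in the $\ell_p$ case, so that the resulting almost greedy basis is conditional.

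For \eqref{eq:LCC}, the paper uses \cite{AADK2019b}*{Theorem 4.5}. The construction in \cite{GW2014} only produces such bases in $\ell_p$ and $L_p$ themselves, not in an arbitrary $\XX$ with a complemented copy of $\ell_p$. Your sketch of how to combine it with the gluing is not yet an argument.

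\textbf{Two smaller points.}

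First, a complemented subspace of a space with a basis need not have a basis (Szarek's example), so you cannot take for granted that $\YY$ has one. The standard fix is to note that $\Sym_0\simeq\Sym_0\oplus\Sym_0$, because $\St$ is subsymmetric. Hence $\XX\simeq\XX\oplus\Sym_0$, and you glue a basis of $\XX$ itself with $\St$.

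Second, your device for the exact identity $\usdf[\XB,\XX]=\usdf[\St,\XX]$ does not work. The norm from Theorem~\ref{thm:main} is defined on all of $\XX$ through $\XB$ and $\XB^*$. Its restriction to $\Sym_0$ need not be the norm furnished by Theorem~\ref{thm:SubSym}, so the fundamental function of $\St$ in the final norm is no longer controlled. The paper does not attempt this. Its proof only shows that the fundamental function of $\XB$ in the new norm is a Dini regularization $\prim$ of $\usdf[\XB,\XX]\approx\usdf[\St,\XX]$. So the identity is obtained only up to equivalence.
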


\begin{proof}
It is known \cite{PelSin1964} that $\XX$ has a conditional Schauder basis. Hence, $\XX$ has a conditional almost greedy basis $\XB$ by Lemma~\ref{lem:Doubling} and \cite{AADK2019b}*{Theorem 4.1}. In the case when $\St$ is equivalent to the unit vector system of $\ell_p$, appealing to \cite{AADK2019b}*{Theorem 4.5} instead, we get that the almost greedy basis $\XB$ satisfies \eqref{eq:LCC}. By Lemma~\ref{lem:URPBD}, Lemma~\ref{lem:CTLRP}, Lemma~\ref{lem:TURP} and Theorem~\ref{thm:main}, we can renorm the space so that $\XB$ has Property~(A). We conclude the proof of the general statement by appealing to Theorem~\ref{thm:1AG}.
\end{proof}
\section{Open problems}\label{Sect:Prob}\noindent
The results obtained in this paper leave open several natural questions that we believe merit further investigation. Some of these arise from the renorming techniques developed here, while others touch on structural aspects of greedy-type bases that remain poorly understood. We gather below a selection of such problems, chosen both for their conceptual relevance and for the potential they offer in advancing the theory of nonlinear approximation and the geometry of Banach spaces.

Despite the fact that both $L_p$ and $L_{p'}$ can be renormed so that $\Ht_p$ and $\Ht_{p'}$ are isometrically greedy, it is unclear whether these norms are in duality.

\begin{question}
Let $1<p<\infty$. Is there a renorming $\Nnorm{\cdot}$ of $L_p$ so that $\Ht_p$ is isometrically greedy relative to $\Nnorm{\cdot}$ and $\Ht_{p'}$ is isometrically greedy relative to the dual norm of $\Nnorm{\cdot}$?
\end{question}

More generally, the existence of an isometrically greedy renorming for a basis $\XB$ of a Banach space $\XX$, and a renorming of $\XX^*$ relative to which $\XB^*$ is isometrically greedy does not guarantee the existence a renorming of $\XX$ so that both $\XB$ and $\XB^*$ are isometrically greedy.

In the absence of a general answer to the existence of isometrically greedy renormings for subsymmetric bases, we must pursue the study of subsymmetric bases whose fundamental function does not have the LRP. In particular, we pose the following.

\begin{question}
Can we renorm the dual space of the Schlumprecht space in such a way that its canonical basis becomes isometrically greedy?
\end{question}

Undoubtedly, the work of Dilworth et al.\@ \cite{DKOSZ2014} was the springboard to furthering the research towards the results achieved in this article. We take this opportunity to revisit an open problem raised there that is still open.

\begin{question}[\cite{DKOSZ2014}*{Problem C}]
Let $\XB$ be a greedy basis of a Banach space $\XX$, and $\epsilon>0$. Can we renorm $\XX$ so that $\XB$ is $(1+\epsilon)$-greedy?
\end{question}

Once we have shown that a large class of conditional almost greedy bases have Property~(A) under renorming, we should turn to bases where our construction does not apply. Arguably, the more important ones are the Lindenstrauss basis in $\ell_1$ (see \cite{DilworthMitra2001}) and the Haar system in $\BV (\RR^n)$, $n\ge 2$ (see \cite{Woj2003}). We emphasize that, unlike other conditional greedy-like bases, these bases originally emerged from settings outside approximation theory.

\begin{question}
Can $\ell_1$ be renormed so that the Lindenstrauss basis has Property~(A)?
\end{question}

\begin{question}
Can $\BV (\RR^n)$, $n\ge 2$, be renormed so that the Haar system has Property~(A)?
\end{question}
\section*{Statements and Declarations}
\subsection*{Conflict of interest}
The authors have no competing interests to declare that are relevant to the content of this article.
\subsection*{Data Availability} Since no datasets were generated or analyzed during the current study, data sharing does not apply to this article.

\end{document}